\newtheorem{thm}{Theorem}[section]
\newtheorem*{thm*}{Theorem}
\newtheorem{prop}[thm]{Proposition}
\newtheorem*{prop*}{Proposition}
\newtheorem{lem}[thm]{Lemma}
\newtheorem{cor}[thm]{Corollary}
\theoremstyle{remark}
\newtheorem{rem}[thm]{Remark}
\newtheorem{example}[thm]{Example}
\newtheorem{defn}[thm]{Definition}
\newcommand{\origraph}{\textbf{OGraph}}
\newcommand{\rmod}{R\text{-}\mathbf{Mod}}
\newcommand{\MM}{\mathrm{M}}
\newcommand{\mat}{\mathrm{M}^o}
\newcommand{\mor}{\mathcal{M}}
\newcommand{\hasse}{\mathtt{F}}
\newcommand{\orient}{\tG_\mathfrak{o}}
\newcommand{\oo}{\mathfrak{o}}
\newcommand{\bN}{\mathbb{N}}
\newcommand{\bS}{\mathbb{S}}
\newcommand{\bZ}{\mathbb{Z}}
\newcommand{\bF}{\mathbb{F}}
\newcommand{\cF}{\mathcal{F}}
\newcommand{\tG}{{\tt G}}
\newcommand{\tH}{{\tt H}}
\newcommand{\tA}{{\tt A}}
\newcommand{\tC}{{\tt C}}
\newcommand{\tP}{{\tt P}}
\newcommand{\tK}{{\tt K}}
\newcommand{\tL}{{\tt L}}
\newcommand{\e}{\varepsilon}
\definecolor{bunired}{rgb}{0.8, 0.0, 0.0}
\title{Monotone cohomologies and oriented matchings}
\author{Luigi Caputi}
\author{Daniele Celoria}
\author{Carlo Collari}
\date{}
\begin{document}

\begin{abstract}
In this paper, we extend the definition of cohomology associated to monotone graph properties, to encompass twisted functor coefficients.
We introduce oriented matchings on graphs, and focus on their (twisted) cohomology groups.
We characterise oriented matchings in terms of induced free-flow pseudoforests, and explicitly determine the homotopy type of the associated simplicial complexes. Furthermore, we provide a connection between the cohomology of oriented matchings with certain functor coefficients, and the recently defined multipath cohomology. Finally,  we define a further oriented homology for graphs and interpret it as a count of free-flow orientations.
\end{abstract}
\maketitle

\section{Introduction}

In recent years, the study of graphs by means of techniques rooted in combinatorial algebraic topology has developed at a very rapid pace. One of the main avenues of research focuses on the interplay between graphs and simplicial complexes; indeed, there are many distinct natural ways to associate a simplicial complex to a graph. Combinatorial, homotopical, and homological properties of these complexes encode interesting information about the graph. 
The study of these simplicial complexes is now an established and active area of research~\cite{MR2022345, Kozlov, Jonsson}, with deep connections with other areas of mathematics -- see \cite[Chapter 1]{Jonsson} for an overview.

In this paper we focus on a special class of simplicial complexes associated to certain graph matchings. As most complexes related to matchings on graphs, they arise as a special case of \emph{monotone complexes}; that is simplicial complexes of subgraphs defined by a property which is closed under edge removal. Such properties are collectively called \emph{monotone properties}.
Monotone complexes have been extensively studied, both from a combinatorial and topological perspective~\cite{Jonsson}. 

In most cases, investigations of monotone complexes have been carried out by applying standard homological methods.
We start our analysis with the observation that, quite often, more refined invariants arise when considering twisted coefficients. Paralleling well-known constructions in topology (\emph{cf}.~\cite{quillenI} and \cite[Theorem~16.2.3]{richter2020categories}), our interest lies in extending these  constructions to 
functors coefficient.
To this end, using techniques borrowed from the recently defined poset homology~\cite{chandler2019posets, primo} -- for a given monotone property $\mathscr{M}$ on a graph~$\tG$ and a certain functor $\cF_A$ (depending on an algebra $A$) -- we construct a cohomology theory $\mathrm{H}_{\mathscr{M}}(\tG;A)$ which we call \emph{monotone cohomology}.
This approach provides a novel and comprehensive tool to study monotone complexes.
By varying the choice of monotone property, one can realise, for example, the chromatic homology  of graphs~\cite{HGRong} and the multipath cohomology of directed graphs \cite{primo} as monotone cohomologies.

We focus our approach on the special case of \emph{oriented matchings}, and show that their associated monotone cohomology can be effectively computed. 
More precisely, for any oriented graph~$\tG$, we define the oriented matching complex $\mat(\tG)$; this is obtained by considering a subset of matchings on the face poset of $\tG$ that are ``compatible'' with the given orientation. 
Oriented matchings are naturally related to pseudoforests endowed with a specific type of orientations, which we call \emph{free-flow} (\emph{cf.}~Definition~\ref{def:freeflow}).
As a first result, we provide a complete characterisation of these matchings in terms of the subgraphs they induce, in analogy with \cite{KozlovTrees,chari2000discrete}.  
\begin{prop}
Let $\tG$ be an oriented graph.
Simplices in $\mat(\tG)$ are in bijection with rooted spanning pseudoforests in $\tG$ whose induced orientation is free-flow.
\end{prop}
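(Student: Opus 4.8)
The plan is to write down an explicit bijection and check that it is well defined in both directions; the statement is essentially a signed/oriented refinement of the classical correspondence between matchings on the face poset of a graph and rooted forests. Unwinding the definitions, a simplex of $\mat(\tG)$ is a matching $\mu$ on the Hasse diagram of the face poset of $\tG$ --- a set of incidences $\{v,e\}$ with $v\in e$, no two of which share a vertex or an edge of $\tG$ --- with the extra requirement that each pair $\{v,e\}$ be compatible with the orientation of $e$; in particular, the endpoint of $e$ that is paired with it is forced, say to be the tail of $e$. To such a $\mu$ I associate the spanning subgraph $F_\mu\subseteq\tG$ whose edge set consists exactly of the edges occurring in $\mu$. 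Because $\mu$ is a matching, distinct edges of $F_\mu$ have distinct tails, so every vertex of $\tG$ is the tail of at most one edge of $F_\mu$; a spanning subgraph with this property has, in each connected component, no more edges than vertices, hence each component is a tree or is unicyclic and $F_\mu$ is a spanning pseudoforest. Counting edges against vertices componentwise, every tree component of $F_\mu$ contains exactly one vertex that is not the tail of one of its edges while every unicyclic component contains none; declaring these distinguished vertices to be roots turns $F_\mu$ into a rooted spanning pseudoforest, and the inequality ``at most one outgoing edge per vertex'' is exactly the condition that the induced orientation be free-flow (Definition~\ref{def:freeflow}).

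For the inverse construction, start from a rooted spanning pseudoforest $F$ in $\tG$ with free-flow induced orientation. Then every non-root vertex of $F$ has out-degree exactly one, and I pair each non-root vertex $v$ with its unique outgoing edge $e_v$ in $F$. The resulting set of pairs is a matching compatible with the orientation: compatibility holds by construction, no vertex is used twice, and an edge $e$ can be paired only with its tail, so no edge is used twice. That the two assignments $\mu\mapsto F_\mu$ and $F\mapsto \mu_F$ are mutually inverse is then immediate: the edges of $F_{\mu_F}$ are the edges of $F$, and its roots are recovered as the out-degree-zero vertices of $F$; conversely, in $F_\mu$ the unique outgoing edge at a non-root vertex $v$ is precisely the edge with which $\mu$ pairs $v$. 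One can also check that this bijection intertwines the face orderings on the two sides, although this is not needed for the statement.

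The only genuinely delicate point is the orientation bookkeeping, and it has two facets. First, one must verify that orientation-compatibility really does determine which endpoint of a matched edge gets used, so that passing from $\mu$ to $F_\mu$ (carrying only the orientation inherited from $\tG$) loses no information. Second, on the unicyclic components one must check that the free-flow condition forces the unique cycle to be coherently oriented, so that every vertex of such a component is indeed a tail --- this is what makes the componentwise edge/vertex counts balance and rules out tree components with no root, or with two competing roots. Once these are in place, everything else reduces to componentwise counting and unwinding definitions.
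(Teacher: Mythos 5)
Your overall strategy --- identify a simplex of $\mat(\tG)$ with the set of edges of $\tG$ underlying the matched pairs, check by an edge/vertex count that the resulting spanning subgraph is a rooted pseudoforest, and recognise the matching condition as the degree condition characterising free-flow orientations --- is essentially the paper's argument (which packages the degree bookkeeping once and for all in Lemma~\ref{lem:pseudoindegree}). However, you have the orientation convention backwards, and this breaks the key identification. By Definition~\ref{def:omatch}, the edges of $\hasse(\tG)$ available for an oriented matching join the barycentre of each edge $e$ to its \emph{target} ${\tt t}(e)$, not to its tail; hence a simplex of $\mat(\tG)$ corresponds to a set of edges of $\tG$ with pairwise distinct \emph{targets}, i.e.\ to a spanning subgraph in which every vertex has \emph{indegree} at most one. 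Your proof instead pairs each edge with its tail, concludes that every vertex has outdegree at most one, and then asserts that ``at most one outgoing edge per vertex'' is exactly the free-flow condition. That assertion is false: by Definition~\ref{def:freeflow} (equivalently, Lemma~\ref{lem:pseudoindegree}) free-flow means the edges point \emph{away} from the roots and cycles, i.e.\ every vertex has indegree $0$ or $1$. A single edge $(v,w)$ already shows the problem: under your convention it is matched with $v$, your rule declares the outdegree-zero vertex $w$ to be the root, and the edge then points \emph{towards} the root, which is not free-flow. So, as written, your map is a bijection onto the rooted spanning pseudoforests oriented towards their roots and cycles, which is not the family in the statement.

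The flaw is one of convention rather than of ideas, and the repair is mechanical: read off from Definition~\ref{def:omatch} that the forced endpoint is the target, replace tail by target and outdegree by indegree throughout, declare the roots of tree components to be the indegree-zero vertices, and in the inverse construction pair each non-root vertex with its unique \emph{incoming} edge. After this swap your argument is correct and coincides with the paper's proof; your ``delicate point'' about unicyclic components (every vertex then has indegree exactly one, forcing the unique cycle to be coherently oriented and the remaining edges to point away from it) is precisely the content of Lemma~\ref{lem:pseudoindegree}, which the paper invokes directly.
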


The homotopical and combinatorial properties of the oriented matching complexes, turn out to be tightly intertwined. Indeed, the homotopy type of $\mat(\tG)$ is completely determined by simple quantities associated to the orientation of $\tG$. The explicit description that we obtain exhibits these complexes as iterated suspensions, whose parameters are related to the graph's indegrees (\emph{cf.}~Proposition~\ref{prop:homotopy of or mat}).
\begin{prop}
For a connected  oriented graph $\tG$, the complex $\mat(\tG)$ is either contractible or homotopic to a wedge of spheres. More precisely,  if a vertex in $\tG$ has indegree $1$, then $\mat(\tG)$ is contractible. Otherwise, we have the homotopy equivalence
\[\mat(\tG) \simeq \bigvee^{q} \bS^{N -1} \ ,\]
where $q$ and $N$ are natural numbers depending on the indegrees of vertices in $\tG$.
\end{prop}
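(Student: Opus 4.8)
The plan is to recognise $\mat(\tG)$ as a simplicial join indexed by the vertices of $\tG$, and then read off the homotopy type from elementary properties of joins.

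\textbf{Step 1: the oriented matching complex is a join.} By the characterisation of simplices given above (or directly from the definition of the oriented matching complex), a simplex of $\mat(\tG)$ is a collection of pairs $(e,v)$, where $v$ is an endpoint of $e$ and $e$ is oriented towards $v$, subject to the two matching conditions: the edges $e$ are pairwise distinct and the vertices $v$ are pairwise distinct. Since in an oriented graph every edge $e$ has a unique head $h(e)$, the compatibility-with-orientation requirement forces $v=h(e)$, so such a pair is determined by $e$ alone. Therefore a simplex of $\mat(\tG)$ is exactly a set of edges of $\tG$ no two of which share the same head. Partitioning $E(\tG)=\bigsqcup_{v}E_v$, where $E_v$ is the set of edges with head $v$ (so $\lvert E_v\rvert=\mathrm{indeg}(v)$), choosing a simplex amounts to choosing at most one edge from each block $E_v$. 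Writing $v_1,\dots,v_n$ for the vertices of $\tG$ and $D_v$ for the discrete complex on the $\mathrm{indeg}(v)$ isolated vertices $E_v$, this says precisely that $\mat(\tG)\cong D_{v_1}\ast\cdots\ast D_{v_n}$, where a vertex with $\mathrm{indeg}(v)=0$ contributes the trivial factor $\{\emptyset\}$, a unit for the join.

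\textbf{Step 2: homotopy type of the factors and of their join.} Each factor $D_v$ is: a single point when $\mathrm{indeg}(v)=1$; the set of $\mathrm{indeg}(v)$ points, i.e. $\bigvee^{\mathrm{indeg}(v)-1}\bS^0$, when $\mathrm{indeg}(v)\geq 2$; and the unit $\{\emptyset\}$ when $\mathrm{indeg}(v)=0$. I will then invoke two standard facts about joins: first, $\mathrm{pt}\ast Y$ is a cone, hence contractible; second, for finite wedges of spheres $\bigl(\bigvee^{a}\bS^{m}\bigr)\ast\bigl(\bigvee^{b}\bS^{n}\bigr)\simeq\bigvee^{ab}\bS^{m+n+1}$ (for instance via $X\ast Y\simeq\Sigma(X\wedge Y)$), and the homotopy type of a join depends only on those of its factors.

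\textbf{Step 3: conclusion.} If some vertex of $\tG$ has indegree $1$, then $\mat(\tG)$ has a point as a join factor and is therefore contractible. Otherwise every indegree is $0$ or $\geq 2$; discarding the unit factors attached to indegree-$0$ vertices, we are left with the join of the wedges $\bigvee^{\mathrm{indeg}(v)-1}\bS^{0}$ over the set $S=\{v:\mathrm{indeg}(v)\geq 2\}$. Setting $N=\lvert S\rvert$ and applying the join formula $N-1$ times yields
\[
\mat(\tG)\;\simeq\;\bigvee^{q}\bS^{\,N-1},\qquad q=\prod_{v\in S}\bigl(\mathrm{indeg}(v)-1\bigr),
\]
which is the asserted form with $q$ and $N$ depending only on the indegrees; the degenerate case of a single vertex (empty join) gives $\{\emptyset\}=\bS^{-1}$, consistent with $N=0$ and the empty product $q=1$.

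The mathematical substance is concentrated in Step 1: one must verify carefully, against the paper's conventions for the face poset and its Hasse diagram (in particular regarding loops and multiple edges), that the compatibility condition really does collapse the choice of endpoint present in an ordinary matching, so that the complex splits as a join over vertices. Once that identification is in place, Steps 2 and 3 are routine. I expect the connectedness hypothesis to play no essential role beyond presenting the dichotomy cleanly—the same join decomposition, and hence the same conclusion, holds verbatim for disconnected $\tG$.
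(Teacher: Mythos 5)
Your argument is correct, and Step 1 is exactly right: by Definition~\ref{def:omatch} the admissible edges of $\hasse(\tG)$ are precisely the barycentre-to-target ones, so a matching among them is just a set of edges of $\tG$ with pairwise distinct targets, and hence $\mat(\tG)$ is literally the simplicial join, over the vertices $v$, of the discrete complexes on the $\mathrm{indeg}(v)$ edges pointing into $v$. The paper arrives at the same join decomposition, but indirectly: it first identifies $\mat(\tG)$ with the multipath complex of the source resolution $\tG_{sr}$ (Proposition~\ref{prop:or_mat=multi_sr}, Definition~\ref{def:G_sr}), observes that $\tG_{sr}$ is a disjoint union of sinks with $\mathrm{indeg}(v)+1$ vertices, and then invokes external facts that the multipath complex of a disjoint union is the join of the multipath complexes and that the multipath complex of a sink is a discrete set of points; the final join-of-wedges-of-spheres computation is the same as your Step 2 (compare Lemma~\ref{lem:joinspheres} and Remark~\ref{rem:easyfromlemma}). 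So your route is more self-contained and slightly more elementary, bypassing the source resolution and the multipath machinery; the paper's detour is not wasted, though, since the same isomorphism is what later upgrades the comparison to the cochain level with functor coefficients (Theorem~\ref{thm:iso}) and feeds the strong shellability corollary. Your observations that connectedness is inessential and that in the non-contractible case your $N=\#\{v:\mathrm{indeg}(v)\geq 2\}$ agrees with the paper's count of vertices of positive indegree (no indegree-$1$ vertices being present) are both accurate, as is your handling of the degenerate edgeless case.
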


Our proof of the above result makes use of a correspondence between matchings on graphs and multipaths -- some special subgraphs satisfying a monotone property \cite{turner}.
A similar correspondence between oriented matchings and multipaths can be used to shed light on the simplicial structure of $\mat(\tG)$, allowing us to obtain the following result:

\begin{cor}
Oriented matching complexes are either strongly shellable or contractible. \\ In particular, all non-contractible matching complexes are Cohen-Macaulay.
\end{cor}
 
For an oriented graph $\tG$ and a commutative algebra $A$, let $C_{o}^*(\tG;A)$ and $C_{\mu}^*(\tG;A)$ be the cochain complexes computing the cohomologies associated to the oriented matching and multipath complexes, respectively. The relation between these two complexes is not direct. 
Rather, to an oriented graph $\tG$ we associate its ``source resolution'' $\tG_{sr}$ (Definition~\ref{def:G_sr}), then the relation is established as follows:

\begin{thm}
The association $\tG \mapsto \tG_{sr}$ yields the following isomorphism of cochain complexes
\[
C_{o}^*(\tG;A)\cong C_{\mu}^*(\tG_{sr};A)\otimes A^{\otimes s}
\]
where $s$ is the number of vertices of indegree $0$ in $\tG$.
\end{thm}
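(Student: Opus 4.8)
The plan is to establish the isomorphism directly, by matching the generators, the coefficient modules, and the differentials of the two cochain complexes. Recall that, as graded $A$-modules, both $C_{o}^*(\tG;A)$ and $C_{\mu}^*(\tG_{sr};A)$ have the shape $\bigoplus \cF_A(-)$, the sum ranging over the simplices of $\mat(\tG)$, respectively over the multipaths of $\tG_{sr}$, graded by the number of edges, and in both cases the differential is assembled from the coface maps of the underlying poset, each twisted by the functor $\cF_A$ and carrying a Koszul sign determined by a fixed linear order on the edge set. Consequently it suffices to produce: (i) a degree-preserving bijection $\Phi$ from the simplices of $\mat(\tG)$ onto the multipaths of $\tG_{sr}$; (ii) for every simplex $\sigma$, a natural identification $\cF_A(\sigma)\cong\cF_A(\Phi(\sigma))\otimes A^{\otimes s}$; and (iii) a check that, after summing, these identifications intertwine the two differentials.

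For (i) I would combine the characterisation of the simplices of $\mat(\tG)$ as rooted spanning pseudoforests of $\tG$ with free-flow induced orientation with the definition of the source resolution (Definition~\ref{def:G_sr}). The graph $\tG_{sr}$ is constructed precisely so as to cut $\tG$ open along its indegree-$0$ vertices; the content of this step is that cutting a free-flow spanning pseudoforest of $\tG$ at its source vertices produces a spanning sub-multipath of $\tG_{sr}$, and that re-glueing at the sources inverts this operation. This is the oriented analogue of the matchings--multipaths correspondence recalled above (cf.~\cite{turner}), and the points one must verify are that the free-flow condition localises all obstructions to being a disjoint union of directed paths at the sources of $\tG$, that spanning-ness is preserved in both directions, and that the edge set --- hence the cohomological degree --- is unchanged by the cut.

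For (ii), recall that $\cF_A$ sends a subgraph to the tensor product of copies of $A$ indexed by the connected components of its underlying spanning subgraph. Under $\Phi$ the components of $\sigma$ that avoid the sources of $\tG$ are carried bijectively to components of $\Phi(\sigma)$, whereas each of the $s$ source vertices accounts for exactly one further component of $\Phi(\sigma)$ --- and does so independently of $\sigma$; this bookkeeping furnishes the natural isomorphism $\cF_A(\sigma)\cong\cF_A(\Phi(\sigma))\otimes A^{\otimes s}$, in which the factor $A^{\otimes s}$ is inert. Naturality with respect to the coface maps is the crucial feature: one checks that a covering relation $\tau\lessdot\sigma$ in $\mat(\tG)$ is sent by $\Phi$ to a covering relation $\Phi(\tau)\lessdot\Phi(\sigma)$ in the multipath poset of $\tG_{sr}$ --- adjoining the corresponding edge on either side, with forbidden adjunctions matching forbidden adjunctions --- and that the attendant structure maps (multiplications of $A$, or identities when no components merge) agree once tensored with $\mathrm{id}_{A^{\otimes s}}$; taking the linear order on the edges of $\tG_{sr}$ to be the one inherited from $\tG$ makes the Koszul signs coincide as well.

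I expect the heart of the argument --- and its main obstacle --- to be the compatibility packaged in (ii)--(iii): namely that the source resolution realises an isomorphism of the two diagrams of $A$-modules over the respective posets, and not merely a size-preserving bijection of simplices. Concretely one must follow how a free-flow pseudoforest of $\tG$ transforms when an edge is adjoined and confirm that, through the resolution, this matches the corresponding move on multipaths of $\tG_{sr}$; in particular one must see that the $s$ source-factors never take part in a multiplication, which should follow from the observation that, after resolution, the edges incident to a former source lie in pairwise distinct components and none of them is ever merged with the factor carried by that source. Once (i)--(iii) are established, summing over all simplices yields the stated isomorphism of cochain complexes, and the evident naturality of the construction in the oriented graph $\tG$ accounts for the phrasing ``the association $\tG\mapsto\tG_{sr}$ yields''.
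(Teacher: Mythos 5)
Your overall architecture is the paper's: transport the oriented-matching poset to the path poset of $\tG_{sr}$ (your step (i) is exactly Proposition~\ref{prop:or_mat=multi_sr}, so it needs no new argument), then check generator by generator that the functor values and structure maps agree up to an inert factor $A^{\otimes s}$. The paper packages your (ii)--(iii) more cleanly: it works with the spanning subgraph $\tG'_{sr}\subseteq\hasse(\tG)$ obtained by deleting the non-coherent edges, observes $\tG'_{sr}\cong\tG_{sr}\sqcup\{s \text{ isolated vertices}\}$, applies Remark~\ref{rem:samemonotneposets}, and uses that the multipath cochain complex of a disjoint union is the tensor product of the multipath cochain complexes.

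The genuine problem is that your step (ii) -- which you rightly call the heart of the argument -- is confused at exactly the decisive point: in which ambient graph the components of $\sigma$ are counted, and on which side the factor $A^{\otimes s}$ lives. The groups of $C_o^*(\tG;A)$ are built from oriented matchings regarded as spanning subgraphs of $\hasse(\tG)$ (this is what gives $A^{\otimes 6}$, not $A^{\otimes 3}$, in degree $0$ in the paper's triangle example), and the $s$ surplus tensor factors are the indegree-$0$ vertices of $\tG$, which are isolated components of \emph{every} $\sigma$ because no coherent edge touches them; they are extra components of $\sigma$, not ``further components of $\Phi(\sigma)$'' -- those vertices do not exist in $\tG_{sr}$ at all, so your bookkeeping sentence contradicts the formula $\cF_A(\sigma)\cong\cF_A(\Phi(\sigma))\otimes A^{\otimes s}$ you then assert (likewise ``the factor carried by that source'' after resolution). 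Worse, under the reading suggested by your step (i) -- components of $\sigma$ taken in $\tG$ via the free-flow pseudoforest -- the claim is false: for the coherently oriented triangle the top pseudoforest has one component while the corresponding multipath in $\tG_{sr}$ has three (and $s=0$), and the structure maps disagree too (closing the cycle induces the identification $\mathrm{I}$, whereas on the $\tG_{sr}$ side every edge addition is a multiplication of two singleton factors). Two slips of the same origin: $\tG_{sr}$ is not obtained by ``cutting $\tG$ open along its indegree-$0$ vertices'' -- it detaches every edge from its source, splitting every vertex of positive outdegree (see $v_2$ in Figure~\ref{fig:sourceres}) -- and multipaths of $\tG_{sr}$ are disjoint unions of single edges and vertices (Remark~\ref{rem:multipaths_in_sr}), not of longer directed paths. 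Once you set (ii) up inside $\hasse(\tG)$ (equivalently, work with the coherent subgraph $\tG'_{sr}$), the componentwise identification, the inertness of the $s$ factors, and the compatibility with differentials all become immediate, and your argument collapses onto the paper's proof.
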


Another interesting aspect of the oriented matching complexes $\mat$, is that they provide a decomposition in subcomplexes of the matching complex of $\tG$.
In the spirit of~\cite{caputi2022categorifying}, we combine them in a single object;
given an oriented graph $\tG_{\oo}$, we consider all possible orientations on the underlying unoriented graph~$\tG$. The choice of $\oo$ provides a natural identification of this set with a Boolean poset. 
We decorate the vertices of this poset with the simplices of the corresponding oriented matching complex.
We can then define $\mathrm{OH}^*(\orient)$, the oriented homology of $\tG$ with respect to the orientation $\oo$.
Surprisingly, it turns out that the structure of this homology is very simple, and tightly related to free-flow orientations:

\begin{thm}
Let $\tG$ be a connected unoriented graph, and let $\oo$ be an orientation on $\tG$. Then,  generators of $\mathrm{OH}^*(\orient)$ are in bijection with free-flow orientations on $\tG$.
\end{thm}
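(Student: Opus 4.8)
The plan is to compute $\mathrm{OH}^*(\orient)$ directly from the bigraded structure of the cochain complex $\mathrm{OC}^*(\orient)$ computing it. A generator of $\mathrm{OC}^*(\orient)$ is a pair $(\tau,\sigma)$ where $\tau$ is an orientation of $\tG$ — a vertex of the Boolean poset of orientations based at $\oo$ — and $\sigma$ is a simplex of $\mat(\tG_\tau)$; the differential is a sum $d=d_{\mathrm c}+d_{\mathrm s}$ of two anticommuting pieces, with $d_{\mathrm c}$ moving along the Boolean poset by flipping a single edge and $d_{\mathrm s}$ the simplicial coboundary of the matching complexes. Using the Proposition that identifies the simplices of $\mat(\tG_\tau)$ with the rooted spanning pseudoforests $P\subseteq\tG$ whose $\tau$-induced orientation is free-flow, I will index each generator also by the number $|P|$ of edges of its pseudoforest.

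The key computation is the effect of $d_{\mathrm c}$ on $(\tau,P)$. Flipping an edge $e$ of $\tG$: if $e\notin P$ the underlying matching is unchanged and remains compatible with the new orientation, whereas if $e\in P$ the pair made of $e$ together with its $\tau$-head is no longer compatible, so the transition map annihilates the generator. Hence $d_{\mathrm c}(\tau,P)$ is a signed sum over edges \emph{outside} $P$; in particular $d_{\mathrm c}$ preserves both $|P|$ and the edge set $P$, while $d_{\mathrm s}$ raises $|P|$ by exactly one. It follows that the generators with $P=\tG$ span a subcomplex of $\mathrm{OC}^*(\orient)$, on which $d_{\mathrm s}=0$ (no edge can be added to $\tG$) and $d_{\mathrm c}=0$ (no edge lies outside $\tG$), so this subcomplex has trivial differential. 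The complementary quotient complex, spanned by the generators with $P\subsetneq\tG$, inherits a filtration by $|P|$ whose associated-graded differential is precisely $d_{\mathrm c}$; for each fixed $P\subsetneq\tG$ this splits — over the orientations making $P$ free-flow — into copies, up to an overall degree shift, of the cochain complex of the Boolean cube $2^{E(\tG)\setminus P}$. Since $E(\tG)\setminus P\neq\varnothing$, each such summand is acyclic, so the associated graded of the quotient is acyclic, and hence so is the quotient.

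By the long exact sequence of the pair, $\mathrm{OH}^*(\orient)$ is then isomorphic to the homology of the $P=\tG$ subcomplex, which, having zero differential, is the free module on its set of generators $(\tau,\tG)$. Finally, $(\tau,\tG)$ is a generator precisely when $\tG$, carrying the orientation $\tau$, is a free-flow rooted spanning pseudoforest of itself, which by the same Proposition is precisely the condition that $\tau$ be a free-flow orientation of $\tG$. (This can happen only if $\tG$ is a pseudoforest; if $\tG$ is not one, there are no such generators, the $P=\tG$ subcomplex is zero, the whole complex coincides with its acyclic quotient, and both $\mathrm{OH}^*(\orient)$ and the set of free-flow orientations are empty.) This produces the asserted bijection $\tau\leftrightarrow[(\tau,\tG)]$ between the free-flow orientations of $\tG$ and a basis of $\mathrm{OH}^*(\orient)$.

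The step I expect to be the real obstacle is the computation of $d_{\mathrm c}$ along edges lying in $P$ — the claim that flipping such an edge kills the simplex — as this is where the precise definition of the transition maps decorating the Boolean poset has to be used; everything afterwards is a routine (and finite) filtration argument, with no convergence or extension subtleties.
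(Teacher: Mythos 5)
Your argument is in substance the paper's, but you start from a slightly incorrect model of the complex: in the paper, $\mathrm{OC}(\tG,\oo)$ is the poset cochain complex of the Boolean poset of orientations with coefficients in the functor $\oo'\mapsto\bF\langle \mat(\tG_{\oo'})\rangle$, so its differential consists \emph{only} of your $d_{\mathrm c}$; the simplicial degree is preserved (the paper states this explicitly), and there is no simplicial coboundary $d_{\mathrm s}$ mixed into the differential. Fortunately your proof is insensitive to this misreading: with $d_{\mathrm s}=0$ the filtration by $|P|$ is by subcomplexes and your subquotient analysis collapses to a genuine direct sum decomposition, which is exactly how the paper argues. Concretely, the paper fixes a matching $m$ (equivalently, via Proposition~\ref{prop:simplessi_psuedoforeste}, a rooted free-flow pseudoforest, i.e.\ your edge set $P$ together with its choice of targets) and shows that the generators $(\tau,m)$, as $\tau$ ranges over the orientations agreeing with $m$ on $P$ and arbitrary on $E(\tG)\setminus P$, span a subcomplex $C(m)$ isomorphic to the reduced chain complex of a simplex on $E(\tG)\setminus P$; these $C(m)$ decompose $\mathrm{OC}(\tG,\oo)$, each is acyclic unless $P=E(\tG)$, and the surviving rank-one pieces correspond, via Lemma~\ref{lem:pseudoindegree}, to the free-flow orientations. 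Your computation of $d_{\mathrm c}$ (flipping an edge of $P$ annihilates the generator, flipping an edge outside $P$ preserves it) is precisely the key step of that proof, and your packaging via the subcomplex of full matchings, the filtration on the quotient, and the long exact sequence is a correct, if slightly more roundabout, route to the same conclusion; what the paper's direct-sum formulation buys is that no filtration or exact-sequence bookkeeping is needed at all.
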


Despite the fact that the dimension of ${\rm OH}(\tG_\oo)$ depends only on the underlying unoriented graph, the grading in ${\rm OH}$ can distinguish between different orientations.

\subsection*{Acknowledgements}
LC~acknowledges support from the \'{E}cole Polytechnique F\'{e}d\'{e}rale de Lausanne via a collaboration agreement with the University of Aberdeen.
DC~was partially supported by the European Research Council (ERC) under the EU Horizon 2020 research and innovation programme (grant agreement No 674978), and by Hodgson-Rubinstein's ARC grant DP190102363 ``Classical And Quantum Invariants Of Low-Dimensional Manifolds''.
CC~is supported by the MIUR-PRIN project 2017JZ2SW5.

\section{Monotone cohomologies of oriented graphs}\label{sec:cohomologies}

Among the most common approaches in defining (co)homology theories of graphs is to first associate to a graph~$\tG$ a simplicial complex~$S(\tG)$, and then to compute the classical (co)homology groups of  $S(\tG)$. Similarly, one can associate to $S(\tG)$ a poset; for instance, one can consider its face poset ${ F}(S(\tG))$. Then, one  can apply a (co)homology theory of posets. The simplest approach, given by computing poset homology groups as in~\cite{Wachs}, yields the simplicial cohomology groups of $S(\tG)$. Since a poset can be seen as a category, we can borrow results and methods from category theory; the following diagram illustrates the idea: 
\[
\begin{tikzcd}
 & &\mathbf{Cat} \arrow[dddrr,bend left,  ""]& &  \\
 & & \mathbf{Poset}\arrow[u, ]\arrow[ddrr,  ""] && \\
 & &\mathbf{Simp}\arrow[u, ]\arrow[drr]&& \\
\origraph \arrow[rrrr, "\text{Homologies of oriented graphs}"]\arrow[urr, ""]\arrow[uurr, ""] \arrow[uuurr, bend left, ""] & & & & \mathbf{Ab} 
\end{tikzcd}
\]
The unmarked arrows pointing towards ${\bf Ab}$ represent homology theories for simplicial complexes, posets, and categories, respectively.
In this section  we introduce the so-called ``poset homology'' as in \cite{chandler2019posets,primo}, and specialise this construction to certain posets associated to oriented graphs.

\subsection{Poset homology}\label{posethomology}
We start by reviewing the definition of poset homology for a finite poset~$P$ with coefficients in a functor~$\mathcal{F}$. We remark here that this construction is related to, but not the same as, the classical poset homology (see, \emph{e.g.}~\cite{Wachs}) which is defined as the homology of the nerve associated to the poset. We refer to \cite{chandler2019posets, primo} for more general expositions on the topic. 

For a poset~$(P,<)$, let $\prec$ denote the \emph{covering relation} associated to $<$, \emph{i.e.}~$x\prec y$ if and only if~$x < y$ and there is no $z$ such that $x< z <y$. 

We say that $P$ is \emph{ranked} if there is a rank function $\ell\colon P\to \bN$ such that $x\prec y$ implies $\ell(y)=\ell(x)+1$. We say that  $P$ is \emph{squared} if, for each triple $x,y,z\in P$ such that $z\prec y \prec x$, there is a unique $y' \neq y$ such that $z\prec y' \prec x$. Such elements $z,y,y',x$, together with their covering relations in $P$, will be called a \emph{square}.
In what follows, we will assume all posets to be ranked and squared.

\begin{example}
Recall that a \emph{regular} CW-complex is a CW-complex for which all attaching maps are homeomorphisms.
A \emph{CW-poset} (\emph{i.e.}~the collection of the cells in a regular CW complex, ordered by containment~\cite[Definition~2.1 \& Proposition~3.1]{BJOR}) is ranked and squared; the rank function is given by the dimension of the cells. In particular, the face poset $F(X)$ of a simplicial complex $X$ is ranked and squared.
\end{example}

A finite poset $(P,<)$ can be  seen as a (small) category $\mathbf{P}$; the set of objects of {\bf P} is the set $P$, and there is a unique morphism $x\to y$ if and only if $x\leq y$. 
Functors on the category associated to the poset~$P$ preserve commutative squares:
For each $x, z\in P$, $x\leq z$, there is a unique mapping $f_{x,z}\colon x\to z$ in the  category {\bf P}.
Assume there is a square between $x$ and~$z$; the existence of such a square implies that $f_{x,z}$ factors as follows
\[ f_{x,z} = f_{y,z} \circ f_{x,y} = f_{y',z} \circ f_{x,y'} \ .\]
Therefore, given a covariant functor $\mathcal{F}: {\bf P} \to {\bf C}$, we must have:
\[\mathcal{F}(f_{y,z}) \circ \mathcal{F}(f_{x,y}) =\mathcal{F}(f_{y,z} \circ f_{x,y}) =\mathcal{F}(f_{x,z})= \mathcal{F}(f_{y',z} \circ  f_{x,y'}) = \mathcal{F}(f_{y',z}) \circ \mathcal{F}(f_{x,y'}) \ . \]
In other words, all functors preserve the commutativity of the squares in $P$.

Let $\bZ_2$ be the cyclic group on two elements.
\begin{defn}\label{def:sign_ass}
	A \emph{sign assignment} on a poset $(P,<)$ is an assignment of elements $\e({x,y})\in \bZ_2$ to each pair of elements $x,y\in P$ with $x \prec y$, such that 
	\begin{equation*}\label{eq:signassign}
		\e({x,y}) + \e({y,z}) \equiv \e({x,y'}) + \e({y',z}) + 1 \mod 2
	\end{equation*}
	holds for each square  $x \prec y,~y' \prec z$. 
\end{defn}

In general, the existence of a sign assignment on a poset~$P$ depends on the topology of a certain CW-complex associated to~$P$ -- see, \emph{e.g.}~\cite[Section~3.2]{primo}, \cite[Section~5]{Putyra}.

\begin{rem}\label{rem:Boolean sings}
Every CW-poset admits a sign assignment, which is unique up to (a suitable notion of) isomorphism -- see, \emph{e.g.}~\cite[Section~4]{chandler2019posets}.
\end{rem}

We can now recall the definition of  poset homology of a poset~$P$ with coefficients in a functor~$\cF$.

Let ${\bf A}$ be an Abelian category -- such as the category of left modules on a commutative ring~$R$ --  $P$ a ranked squared poset with rank function~$\ell$, and $\e$ a sign assignment on $P$.
Given a covariant functor 
$ \mathcal{F}\colon{\bf P} \to {\bf A}$,
we define the cochain groups 
\begin{equation} 
C^n_{\mathcal{F}}(P) \coloneqq \bigoplus_{\substack{x\in P\\ \ell(x) = n}}  \mathcal{F}(x),
\end{equation}
and the differentials 
\begin{equation}\label{eq:diff}
    d^{n}=d^{n}_{\mathcal{F}} \coloneqq \sum_{\substack{x\in P\\  \ell(x) = n}}\ \sum_{\substack{x'\in P\\ x \prec x'}} (-1)^{ \e(x,x')} \mathcal{F}(x \prec  x') \ . 
\end{equation}

With these definitions in place, we can state one of the main results from~\cite{chandler2019posets} and~\cite{primo}. 

\begin{thm}\label{teo: general cohom}
Let ${\bf A}$ be an Abelian category, $P$  be a ranked squared poset, and $\e$ be a sign assignment on $P$. Then, for any $n\in\mathbb{N}$ and any functor $\mathcal{F}\colon{\bf P} \to {\bf A}$ we have $d^{n+1} \circ d^{n} \equiv 0$. In particular, $(C^*_{\mathcal{F}}(P), d^*)$ is a cochain complex.
\end{thm}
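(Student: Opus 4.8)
**The plan is to show $d^{n+1}\circ d^n = 0$ by expanding the composition and organizing the resulting terms according to the squares of the poset $P$.** Fix a functor $\mathcal{F}\colon\mathbf{P}\to\mathbf{A}$ and an element $x\in P$ with $\ell(x)=n$. Applying $d^{n+1}\circ d^n$ to the summand $\mathcal{F}(x)\subseteq C^n_{\mathcal{F}}(P)$ produces a sum over all length-two chains $x\prec y\prec z$ in $P$, with the term indexed by such a chain being $(-1)^{\e(x,y)+\e(y,z)}\,\mathcal{F}(y\prec z)\circ\mathcal{F}(x\prec y)$. Since $P$ is ranked, every chain contributing to the composition has this form (the rank jumps by exactly $1$ at each covering), so there are no diagonal or degenerate terms to worry about — this is where rankedness is used.

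Next I would invoke the squared hypothesis: the length-two chains from $x$ to a fixed $z$ with $x\prec y\prec z$ and $z$ of rank $n+2$ come in pairs $\{x\prec y\prec z,\ x\prec y'\prec z\}$ forming a square, with $y\neq y'$ and no other intermediate element. So the terms of $d^{n+1}\circ d^n$ landing in $\mathcal{F}(z)$ group into contributions from squares, each square contributing
\[
(-1)^{\e(x,y)+\e(y,z)}\,\mathcal{F}(y\prec z)\circ\mathcal{F}(x\prec y)
+(-1)^{\e(x,y')+\e(y',z)}\,\mathcal{F}(y'\prec z)\circ\mathcal{F}(x\prec y').
\]
By the functoriality discussion preceding the theorem — functors on $\mathbf{P}$ preserve the commutativity of squares, so $\mathcal{F}(y\prec z)\circ\mathcal{F}(x\prec y)=\mathcal{F}(x\prec z)=\mathcal{F}(y'\prec z)\circ\mathcal{F}(x\prec y')$ — the two maps agree, and the bracket becomes $\bigl((-1)^{\e(x,y)+\e(y,z)}+(-1)^{\e(x,y')+\e(y',z)}\bigr)\mathcal{F}(x\prec z)$. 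The defining relation of a sign assignment, $\e(x,y)+\e(y,z)\equiv\e(x,y')+\e(y',z)+1\pmod 2$, forces the two signs to be opposite, so each square's contribution vanishes.

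Finally, summing over all squares and all target elements $z$ shows the composite is zero on each summand $\mathcal{F}(x)$, hence $d^{n+1}\circ d^n\equiv 0$, so $(C^*_{\mathcal{F}}(P),d^*)$ is a cochain complex. The only genuinely delicate point is bookkeeping: one must be sure that every length-two chain $x\prec y\prec z$ occurs in exactly one square and is counted exactly once, which is precisely the content of the squared axiom (existence and uniqueness of the partner $y'$); I would state this carefully rather than gloss it. Everything else is a routine sign computation combined with the already-established fact that $\mathcal{F}$ preserves square-commutativity.
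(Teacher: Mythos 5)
Your proof is correct: the expansion of $d^{n+1}\circ d^n$ into length-two chains, the pairing of chains via the squared axiom (which guarantees exactly two intermediate elements between $x$ and $z$ whenever there is at least one), the equality of the two composites by square-commutativity of $\mathcal{F}$, and the cancellation forced by the sign-assignment relation is exactly the standard argument. The paper itself does not reprove this theorem -- it imports it from the cited references on poset/thin-poset homology -- and your argument is essentially the proof given there, so there is nothing to add.
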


The differentials $d^n$, and therefore the cochain complexes,  depend \emph{a priori} upon the choice of the sign assignment~$\e$.  In the cases of interest to us, that is  CW-posets,  the choice of the sign assignment does not affect the isomorphism type of the cochain complexes~$(C^*_{\mathcal{F}}(P), d^*)$ -- see, for instance, \cite[Corollary~3.18]{primo}.

\subsection{Monotone cohomologies of oriented graphs}
Recall that an \emph{unoriented graph} \tG~is a pair of finite sets~$(V,E)$ consisting of: a set of \emph{vertices} $V$, and a set $E$ whose elements, called \emph{edges}, are unordered pairs of {distinct} vertices of $\tG$. All graphs are assumed to be simple.
We will also consider oriented graphs, whose definition we now recall;

\begin{defn}\label{def:origraph}
An \emph{oriented graph} $\tG$ is a pair of finite sets $(V(\tG),E(\tG))$, such that $E(\tG) $ is a subset of~$V(\tG)\times V(\tG) \setminus \{ (v,v)\ |\ v\in V(\tG) \}$, and at most one among $(v,w)$ and $(w,v)$ belongs to~$E(\tG)$.
\end{defn}

By definition, an edge $e$ of an oriented graph $\tG$ is an ordered set of two distinct vertices, say~$e = (v, w)$. The vertex $v$ is the \emph{source} ${\tt s}(e)$ of $e$, while the vertex $w$ is the \emph{target} ${\tt t}(e)$ of $e$. 

A morphism of oriented graphs is a function $\phi\colon V(\tG_1)\to V(\tG_2)$ sending  edges to edges: 
\[
(v,w)\in E(\tG_1)\Rightarrow (\phi(v),\phi(w))\in E(\tG_2) \ ;
\]
observe that a morphism of oriented graphs does not allow collapsing, meaning that $(v,w)\in E(\tG_1)\Rightarrow \phi(v)\neq \phi(w)$. We call \emph{regular} those morphisms of oriented graphs that are also injective as maps of the vertices\footnote{Note that non-collapsing does not imply regular.}. 
Oriented graphs and regular morphisms of oriented graphs form a category  that we denote by $\origraph$. 

\begin{rem}
The results in this section are stated in the category $\origraph$, but everything holds verbatim for other categories of graphs, such as unoriented graphs, directed graphs, quivers, \emph{etc}.
Throughout the rest of the paper, when clear from the context, we will omit the reference to the category of graphs we are using.
\end{rem}

Let $\mathbf{PosFun}$ be the category of tuples $(P,\cF,\mathbf{A},\e)$ consisting of a poset $P$,  a functor $\cF\colon \mathbf{P}\to \mathbf{A}$ with values in an additive category $\mathbf{A}$, and a sign assignment $\e$ on $P$. Then, any functor
\[
\origraph\to \mathbf{PosFun}
\]
associating to each oriented graph~$\tG$ the tuple $(P,\cF,\mathbf{A},\e)$, produces a homology theory of oriented graphs. We can then apply the poset homology construction detailed in the previous section to obtain cohomology groups $\mathrm{H}_*(P;\cF)$. For a general overview on this framework, see~\cite[Section~7]{chandler2019posets}.

\begin{example}
Let $S\colon \origraph\to \mathbf{Simp}$ a functor from oriented graphs to the category of simplicial complexes and simplicial maps.
Let $R$ be a commutative unital ring,  $\rmod$ the category of $R$-modules, and~$\cF$ the constant functor $R$ on the (category associated to the) face poset $\mathbf{F}(S(\tG))$. Then, for each choice of a sign assignment $\e$ on $P$, we have an induced functor~$\tG\mapsto(\mathbf{F}(S(\tG)),R, \mathbf{Ab}, \e) $ whose associated poset homology is the classical simplicial homology of the simplicial complex $S(\tG)$, with coefficient in the ring $R$. 
\end{example}

We want to apply this general machinery  to the case of a poset $P$, whose elements are subgraphs of a given  graph $\tG$. In such case, we can specify a functor $\mathcal{F}_A \colon {\bf P} \to \rmod$ depending on an algebra $A$.

Let $\tG$ be a graph.
Recall that a property $\mathscr{M}$ on the set $SS(\tG)$ of spanning\footnote{That is, a subgraph whose set of vertices is the same as the set of vertices of the whole graph.} subgraphs of $\tG$ is called \emph{monotone} if $$\mathscr{M}(\tH)\Rightarrow \mathscr{M}(\tK)$$  for any subgraph $\tK$ in $SS(\tG)$ obtained from $\tH$ by removing one edge. 
For a given monotone property $\mathscr{M}$ on $\tG$, we define the poset $P_{\mathscr{M}}(\tG)$ whose elements are  the spanning subgraphs of~$\tG$ satisfying  $\mathscr{M}$, endowed with the inclusion relation.

\begin{example}\label{ex:spanning}
For a (non-necessarily oriented) graph $\tG$, the set of spanning graphs $SS(\tG)$, endowed with the inclusion relation, is the poset associated to the monotone property ``being a spanning subgraph''. Note that $SS(\tG)$ is isomorphic (as a poset) to the Boolean poset $(E(\tG),\subset)$.
\end{example}

\begin{example}\label{ex:multipath}
A \emph{multipath} in an oriented graph $\tG$ is a spanning subgraph whose vertices have both indegree and outdegree at most one, and does not contain any cycle -- \emph{cf.}~\cite{turner}.
The \emph{path poset} $P(\tG)$ is the poset given by all multipaths in $\tG$ ordered by inclusion.
\end{example}

\begin{rem}\label{rem:realizability}
For any given monotone property $\mathscr{M}$, the poset $P_{\mathscr{M}}(\tG)$ is squared and downward closed in the Boolean poset  $SS(\tG)$. Moreover, the poset ${P}_{\mathscr{M}}(\tG)$ is a CW-poset, and  in fact simplicial. 
\end{rem}

 From now on, $R$ denotes a commutative ring with identity, and $A$ is a commutative unital $R$-algebra. We now define an explicit functor $\cF_A\colon{\bf P}_{\mathscr{M}}(\tG)\to \rmod$.
 
Given a subgraph $\tH \subseteq\tG$ such that $\mathscr{M}(\tH)$ holds, to each connected component of $\tH$,  we associate a copy of $A$. Then we take their ordered tensor product; to do so, we must fix an order of the connected components of each subgraph in $\mathscr{M}(\tG)$. It can be proved that this choice is immaterial, \emph{cf}.~\cite{HGRong, primo}. For the sake of concreteness, we fix an order on $V(\tG)$, thus inducing an order of the components in each $\tH\in SS(\tG)$, according to the minimal vertex on each component.
If $c_0<\dots<c_k$ is the set of ordered connected components of $\tH$, we define: 
\begin{equation}\label{eq:fun_obj}
	\mathcal{F}_{A}(\tH)\coloneqq  A_{c_1} \otimes_R \cdots 
	\otimes_R  A_{c_k}\ ,\end{equation}
where  all the modules are labelled  by the respective connected component. 

Assume now that $\tH' = \tH\cup e$ and that $\mathscr{M}(\tH')$ holds. Denote by $c_0$,...,$c_{k}$ the ordered components of $\tH$, and by  $c'_0$,...,$c'_{k-1}$ the ordered components of $\tH'$; further assume that the addition of $e$ merges the components $c_i$ and $c_j$. Then, for each $h=0,...,k-1$, there is a natural identification
\begin{equation}\label{eq:identification_components}
	c'_h = \begin{cases} c_h & \text{if } 0\leq h<i \text{ or } i< h < j;\\ c_i \cup e \cup c_j &\text{if }  h =i; \\ c_{h+1} &\text{if }  j\leq h<k.\end{cases}
\end{equation}
for some $0\leq i < j \leq k$. Using this  identification,
 we  define
$ \mu_{\tH\prec \tH'}\colon \mathcal{F}_{A}(\tH) \longrightarrow \mathcal{F}_{A}(\tH')$
 as 
\[ \mu_{\tH\prec \tH'}(a_0 \otimes \cdots \otimes a_k) =  
a_{0} \otimes \cdots \otimes a_{i-1} \otimes a_{i}\cdot a_{j} \otimes a_{i+1} \otimes \cdots \otimes \widehat{a_{j}} \otimes \cdots \otimes a_{k-1} \otimes a_{k} 
	\]
where $ \widehat{a_{j}} $ indicates the removal of $a_{j}$.
Now, assume that  $\tH' = \tH\cup e$,  that $\mathscr{M}(\tH')$ holds, and that, unlike the above case, the addition of $e$ does not affect the number of connected components. In this case, we have a natural identification between connected components in $\tH$ and $\tH'$, which induces an identification ${\rm I}_{\tH\prec \tH'}: \mathcal{F}_{A}(\tH) \to \mathcal{F}(\tH')$.
We can therefore define
\begin{equation}\label{eq:fun_mor}
	\mathcal{F}_{A}(\tH\preceq \tH')\coloneqq
	\begin{cases}  
		\mu_{\tH\prec \tH'} & \text{ if } \tH\prec \tH',\  |\pi_0(\tH')| <  |\pi_0(\tH)|\\
		 {\rm I}_{\tH\prec \tH'} & \text{ if } \tH\prec \tH',\ |\pi_0(\tH')| = |\pi_0(\tH)|\\
		\mathrm{Id}_{\mathcal{F}_{A}(\tH)} & \text{ if } \tH= \tH'
	\end{cases} \ .
\end{equation}
Equations~\eqref{eq:fun_obj} and \eqref{eq:fun_mor} describe a functor 
\begin{equation}\label{eq:functor_pathposet}
\mathcal{F}_{A}\colon \mathbf{P}_\mathscr{M}(\tG)\to R\text{-}\mathbf{Mod}
\end{equation}
from the category $ \mathbf{P}_{\mathscr{M}}(\tG)$ to the additive category $R$-$\mathbf{Mod}$ of left $R$-modules. In fact, we have the following:

\begin{prop}\label{prop:functpreservessq}
The assignment $\mathcal{F}_{A}\colon \mathbf{P}_{\mathscr{M}}(\tG)\to R\text{-}\mathbf{Mod}$ defines a covariant functor.
\end{prop}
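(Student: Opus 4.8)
The plan is to verify directly that $\mathcal{F}_A$ respects composition and identities, the identities being immediate from the third case of \eqref{eq:fun_mor}. Since $\mathbf{P}_{\mathscr{M}}(\tG)$ is a ranked poset in which every morphism factors as a composite of covering relations, and since (by Remark~\ref{rem:realizability}) the poset is squared, it suffices to check the two following local compatibilities: first, that for any two distinct maximal chains from $\tH$ to $\tH''$ with $\tH \prec \tH' \prec \tH''$ through the two sides of a square, the two composites $\mathcal{F}_A(\tH'\preceq \tH'')\circ \mathcal{F}_A(\tH\preceq \tH')$ agree; and second, that longer composites are independent of the chosen maximal chain. The second point reduces to the first by the standard fact that in a squared (more generally, in a CW-) poset any two maximal chains between comparable elements are connected by a sequence of square-flips; this is exactly the ``diamond lemma'' style argument, and I would cite it from \cite{BJOR} or \cite{primo} rather than reprove it. So the real content is the square case.

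For the square case, I would enumerate the possibilities for a square $\tH \prec \tH', \tH'' \prec \tH'''$, where $\tH''' = \tH \cup e \cup f$ for two distinct edges $e,f$, and $\tH' = \tH\cup e$, $\tH'' = \tH\cup f$ (using that $\mathbf{P}_{\mathscr{M}}(\tG)$ sits inside the Boolean poset $SS(\tG)$, so squares are honestly of this form). Each of the two edges, when added, either merges two components or not, and this status can in principle change after the other edge has been added; so there are a handful of cases to organise: (a) neither $e$ nor $f$ merges components, in which case all four maps are the natural identifications ${\rm I}$ and the square commutes trivially; (b) exactly one of them, say $e$, merges components, and $f$ never does — then on one side we apply $\mu$ then ${\rm I}$, on the other ${\rm I}$ then $\mu$, and commutativity follows because the multiplication map and the identification act on disjoint tensor factors; (c) both $e$ and $f$ merge components. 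Case (c) itself splits according to whether $e$ and $f$ join the same pair of components of $\tH$ or two different (possibly overlapping) pairs. In all of these, commutativity is ultimately a consequence of two algebraic facts about the commutative algebra $A$: associativity and commutativity of its multiplication $A\otimes_R A \to A$. This is the standard ``Frobenius-algebra bookkeeping'' familiar from the construction of chromatic homology in \cite{HGRong} and from \cite{primo}, and I would present the nontrivial subcases of (c) explicitly (tracking the reindexing \eqref{eq:identification_components} of the ordered components carefully) while dispatching (a) and (b) in a sentence.

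The main obstacle is purely notational rather than conceptual: keeping the induced orderings of the connected components consistent across the square. When two edges are added, the minimal-vertex ordering of the resulting components, and hence the labelling of the tensor factors, can shuffle in a way that depends on the order of addition, so one must check that the two bracketings of $a_i\cdot a_j\cdot a_k$ land in the same slot with the same sign-free identification; this is where the combinatorial identity \eqref{eq:identification_components} must be applied twice and matched up. I expect this to be the only delicate point, and it is handled by the observation that the order on $V(\tG)$ is fixed once and for all, so the ordering of the components of any $\tH\in SS(\tG)$ is determined intrinsically by $\tH$ and does not remember how $\tH$ was built — which forces the two routes around the square to agree. Once this is noted, each case collapses to commutativity/associativity of $\cdot$, completing the proof.
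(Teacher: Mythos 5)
Your proposal is correct and takes essentially the same route as the paper, which likewise reduces functoriality to checking that the maps attached to covering relations commute around squares (using commutativity of $A$; the paper delegates the component-ordering bookkeeping to \cite[Subsection~2.2]{HGRong}) and then appeals to the general principle that square-preservation suffices for this poset (citing \cite[Theorems~6.1 \&~5.14]{chandler2019posets}). The one caution is that your ``diamond lemma'' step does not follow from squaredness alone, so it should be justified by the fact, recorded in Remark~\ref{rem:realizability}, that $P_{\mathscr{M}}(\tG)$ is a CW-poset -- in fact simplicial, with Boolean intervals -- which is exactly the hypothesis under which the chain-flip connectivity you invoke is available.
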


\begin{proof}
The assignment $\mathcal{F}_{A}(\tH\prec \tH')\coloneqq  \mu_{\tH\prec \tH'}$ in Equation~\eqref{eq:fun_mor} preserves all commutative squares in $\mathbf{P}_{\mathscr{M}}(\tG)$ -- here we used that the algebra $A$ is commutative (see also~\cite[Subsection~2.2]{HGRong}).
The poset $\mathbf{P}_{\mathscr{M}}(\tG)$ is a CW-poset, hence the statement follows from  \cite[Theorems~6.1 \&~5.14]{chandler2019posets}.
\end{proof}

We can summarize the results of this section in the following theorem; 
\begin{thm}
Let $\mathscr{M}$ be a monotone graph property. Then the graded $R$-module 
$C^*_{\mathcal{F}_{A}}(P_{\mathscr{M}}(\tG))$, endowed with the differential $ d^*_{\mathcal{F}_{A}}$ is a cochain complex.
\end{thm}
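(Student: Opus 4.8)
The plan is to assemble the final theorem from the pieces already established in this section. The statement asserts that $(C^*_{\mathcal{F}_A}(P_{\mathscr{M}}(\tG)), d^*_{\mathcal{F}_A})$ is a cochain complex; by Theorem~\ref{teo: general cohom} it suffices to check that the three ingredients required there are in place. First I would record that $P_{\mathscr{M}}(\tG)$ is a ranked squared poset: by Remark~\ref{rem:realizability} it is squared (indeed simplicial, hence a CW-poset), and the rank function is inherited from the ambient Boolean poset $SS(\tG)$, namely $\ell(\tH) = |E(\tH)|$, which satisfies $\ell(\tH') = \ell(\tH)+1$ whenever $\tH \prec \tH'$ since a covering relation corresponds to the addition of a single edge. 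Second, since $P_{\mathscr{M}}(\tG)$ is a CW-poset, Remark~\ref{rem:Boolean sings} guarantees the existence of a sign assignment $\e$, unique up to isomorphism; fix one. Third, Proposition~\ref{prop:functpreservessq} shows that $\mathcal{F}_A \colon \mathbf{P}_{\mathscr{M}}(\tG) \to \rmod$ is a genuine covariant functor into the Abelian category $\rmod$.

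With these three facts in hand the conclusion is immediate: applying Theorem~\ref{teo: general cohom} with $\mathbf{A} = \rmod$, $P = P_{\mathscr{M}}(\tG)$, the chosen sign assignment $\e$, and the functor $\mathcal{F} = \mathcal{F}_A$, we obtain $d^{n+1}_{\mathcal{F}_A} \circ d^n_{\mathcal{F}_A} \equiv 0$ for all $n$, so that $(C^*_{\mathcal{F}_A}(P_{\mathscr{M}}(\tG)), d^*_{\mathcal{F}_A})$ is a cochain complex. I would also remark, as the paragraph following Theorem~\ref{teo: general cohom} does, that although $d^*$ depends a priori on the choice of $\e$, the isomorphism type of the cochain complex is independent of this choice because $P_{\mathscr{M}}(\tG)$ is a CW-poset (cf.\ the cited corollary). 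This justifies writing $\mathrm{H}_{\mathscr{M}}(\tG;A)$ without reference to $\e$.

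Honestly, there is no real obstacle here — the theorem is a synthesis statement, and all the substantive work has been done in Proposition~\ref{prop:functpreservessq} (functoriality, which in turn rested on commutativity of $A$ and the CW-poset structure via~\cite{chandler2019posets}) and in the general machinery of Theorem~\ref{teo: general cohom}. The only point that deserves a sentence of care is confirming that the rank function on $P_{\mathscr{M}}(\tG)$ is the restriction of the edge-count function and that covering relations in $P_{\mathscr{M}}(\tG)$ coincide with covering relations in $SS(\tG)$ that happen to land inside $P_{\mathscr{M}}(\tG)$; this follows from downward-closedness of $P_{\mathscr{M}}(\tG)$ in $SS(\tG)$, again recorded in Remark~\ref{rem:realizability}. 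Thus the proof is essentially a one-line invocation of Theorem~\ref{teo: general cohom} preceded by a checklist verifying its hypotheses.
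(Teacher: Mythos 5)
Your proposal is correct and follows essentially the same route as the paper's proof: verify the hypotheses of Theorem~\ref{teo: general cohom} via Remark~\ref{rem:Boolean sings} (existence of a sign assignment on the CW-poset $P_{\mathscr{M}}(\tG)$) and Proposition~\ref{prop:functpreservessq} (functoriality of $\mathcal{F}_A$), then invoke that theorem. Your extra care about the rank function and downward-closedness in $SS(\tG)$ is a harmless elaboration of what the paper leaves implicit in Remark~\ref{rem:realizability}.
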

\begin{proof}
By Remark~\ref{rem:Boolean sings} there exists a sign assignment on $P_\mathscr{M}(\tG)$, and $\cF_{A}\colon{\bf P}_\mathscr{M}(\tG) \to \rmod$ is a functor by Proposition~\ref{prop:functpreservessq}. Therefore, by Theorem~\ref{teo: general cohom}, $(C^*_{\mathcal{F}_{A}}(P_{\mathscr{M}}(\tG)), d^*)$  is a cochain complex. 
\end{proof}

We define the associated cohomology groups:
\begin{defn}\label{def:monhom} Let $\mathscr{M}$ be a monotone property.
The \emph{monotone cohomology} $\mathrm{H}_{\mathscr{M}}^*(\tG;A)$ (with respect to $\mathscr{M}$ and $A$) of an oriented graph $\tG$ is the homology  of the cochain complex $(C^{*}_{\mathscr{M}}(\tG;A),d^*)$, where $C^{*}_{\mathscr{M}}(\tG;A)\coloneqq C^*_{\mathcal{F}_{A}}(P_{\mathscr{M}}(\tG))$. 
\end{defn}

Consider the posets of spanning subgraphs and the path poset (\emph{cf.}~Examples~\ref{ex:spanning} and~\ref{ex:multipath}). The associated monotone cohomologies are the chromatic homology~\cite{HGRong} and the multipath cohomology \cite{primo}, respectively.

In the definition of $\cF_A$, one can replace $I_{\tH\prec \tH'}$ with the zero morphism. The replacement yields again a well-defined cochain complex. When $\mathscr{M} = SS(\tG)$, this was done by Przyticki~\cite{Prz} to obtain a variation of the chromatic homology. This theory was used to provide a connection between Khovanov homology and Hochschild homology.

\section{Oriented matching complexes}

The purpose of this section is  twofold; first, we introduce several classical concepts related to graphs, such as matchings and their associated simplicial complexes. Then, we show how to extend this to the oriented setting by defining oriented matchings. These matchings  turn out to be related to a special kind of orientations, called free-flow. 

\subsection{Matchings}

A (oriented or unoriented) graph~$\tG$ can be regarded as a $1$-dimensional simplicial complex. We denote by $F(\tG)$ its face poset; this is the poset whose elements are the non-empty simplices in $\tG$, and whose order is given by inclusion. 

\begin{figure}
\centering
\begin{tikzpicture}[thick]
\node (a) at (-1,0) {};
\node (b) at (1,0) {};
\node (c) at (0,1.73) {};

\draw[fill, black] (a) circle (.05) node[left] {$v_{0}$};
\draw[fill, black] (b) circle (.05) node[right] {$v_{1}$};
\draw[fill, black] (c) circle (.05) node[above] {$v_{2}$};

\draw[bunired] (a) -- (b);
\draw[bunired] (c) -- (b);
\draw[bunired] (a) -- (c);


\begin{scope}[shift = {+(5,0)}]

\node (a) at (-2,0) {};
\node (b) at (0,0) {};
\node (c) at (2,0) {};
\node (ac) at (0, 1.73) {};
\node (bc) at (2, 1.73) {};
\node (ab) at (-2, 1.73) {};

\draw[fill, black] (a) circle (.05) node[below] {$v_{0}$};
\draw[fill, black] (b) circle (.05) node[below] {$v_{1}$};
\draw[fill, black] (c) circle (.05) node[below] {$v_{2}$};

\draw[] (ac) circle (.05) node[above] {$(v_{0},v_2)$};
\draw[] (bc) circle (.05) node[above] {$(v_{1},v_2)$};
\draw[] (ab) circle (.05) node[above] {$(v_0,v_{1})$};

\draw[-latex] (ab) -- (b);
\draw[-latex] (ac) -- (c);

\draw[-latex] (ab) -- (a);
\draw[-latex] (bc) -- (c);
\draw[-latex] (ac) -- (a);

\draw[line width = 3, white] (ab) -- (b);
\draw[-latex] (ab) -- (b);

\draw[line width = 3, white] (bc) -- (b);
\draw[-latex] (bc) -- (b);
\end{scope}


\begin{scope}[shift = {+(10,0)}]

\node (a) at (-1,0) {};
\node (b) at (1,0) {};
\node (c) at (0,1.73) {};
\node (ac) at (-.5, .866) {};
\node (bc) at (.5, .866) {};
\node (ab) at (0, 0) {};

\draw[fill, black] (a) circle (.05) node[left] {$v_{0}$};
\draw[fill, black] (b) circle (.05) node[right] {$v_{1}$};
\draw[fill, black] (c) circle (.05) node[above] {$v_{2}$};

\draw[] (ac) circle (.05) node[above left] {$(v_{0},v_2)$};
\draw[] (bc) circle (.05) node[above right] {$(v_{1},v_2)$};
\draw[] (ab) circle (.05) node[below] {$(v_0,v_{1})$};

\draw[bunired, -latex] (ab) -- (b);
\draw[bunired, -latex] (bc) -- (b);
\draw[bunired, -latex] (ac) -- (c);

\draw[bunired, -latex] (ab) -- (a);
\draw[bunired, -latex] (bc) -- (c);
\draw[bunired, -latex] (ac) -- (a);
\end{scope}
\end{tikzpicture}
\caption{From left to right: an unoriented graph, its face poset  and its (oriented) barycentric subdivision.}
\label{fig:hassegraph}
\end{figure}
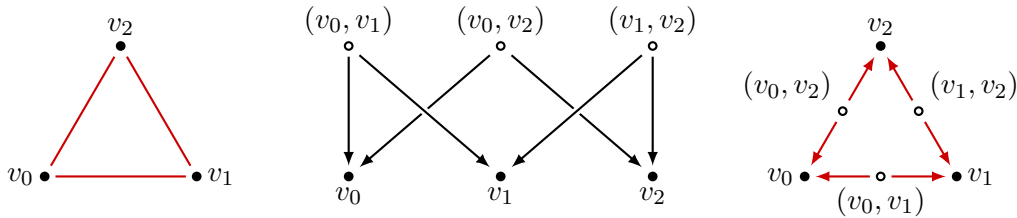

Note that $F(\tG)$ can be straightforwardly seen as an oriented graph, denoted by $\hasse(\tG)$, as follows; the vertices of $\hasse(\tG)$ are the elements in $F(\tG)$, and there is an oriented edge $(x,y)$ if $y\prec x$.
The graph  $\hasse(\tG)$  can be identified with the oriented barycentric subdivision of $\tG$, as shown in  Figure~\ref{fig:hassegraph}.

\begin{defn}\label{def:grcomplexes}
A \emph{graph matching} on a graph $\tG$ is a subset of $E(\tG)$ consisting of pairwise disjoint edges. The collection of graph matchings on $\tG$ will be denoted by $\overline{\MM}(\tG)$.
\end{defn}

\begin{defn}\label{def:complexes}
For a simplicial complex~$X$, a \emph{matching} on $X$ is a graph matching on the underlying unoriented graph of~$\hasse(X)$. 
We denote the set of all matchings on $X$ by~$\MM(X)$.
\end{defn}
Note that, given a simplicial complex $X$, we have $\MM(X) = \overline{\MM}(\hasse(X))$.
In particular, there are at least two kinds of matchings that can be considered on a simple graph $\tG$, that is $\overline{\MM}(\tG)$ and~$\MM(\tG)$. 
Observe that $\MM(X)$ and $\overline{\MM}(\tG)$ admit a natural simplicial structure; the $i$-simplices are the matchings with $i$ edges.
To stress the difference between these two, we call the former simplicial complex  \emph{matching complex}, and the latter the \emph{graph matching complex}.%

In~\cite{celoria2020filtered} a $\bN$-valued filtration $J$ on $\MM(X)$ was defined. Roughly speaking, the value of~$J$ on a matching $m \in \MM(X)$ is the number of oriented cycles in $\hasse(X)$ obtained by inverting the orientation of all the edges in $m$.
For $j \in \bN$ set $\MM_j(X) = J^{-1}([0,j])$. Each $\MM_j(X)$ is a simplicial subcomplex of $\MM(X)$. The elements of $\MM_0(X) \eqqcolon \mor (X)$ are called \emph{discrete Morse matchings}~\cite{chari2000discrete}.
 
Given an unoriented graph~$\tG$, denote by ${O}(\tG)$ the set of all possible orientations on $\tG$. For each $\mathfrak{o}\in {O}(\tG)$, the corresponding oriented graph will be denoted by $\orient$. 
It is apparent from Definition~\ref{def:complexes} that, for an oriented graph $\tG$, $\MM(\tG)$ does not depend on the orientation of the edges of $\tG$. We include this information as follows:
\begin{defn}\label{def:omatch}
An \emph{oriented matching} $m$ on an oriented graph $\tG$ is a matching on the subgraph of $\hasse(\tG)$ consisting of the edges connecting the barycentres of the edges of $\tG$ to their targets.
\end{defn}

\begin{example}
Consider the oriented graph $\tG$ depicted on the left side of Figure~\ref{fig:cycleoriented}. The matching $\{ ((v_0,v_1),v_1), ((v_0,v_2),v_2)\}$ is an oriented matching and $\{ ((v_0,v_1),v_1), ((v_2,v_1),v_1)\}$ is not.
\end{example}

We denote by $\mat(\tG)$ the simplicial complex consisting of the oriented matchings on an oriented graph $\tG$.

\begin{figure}[ht]
\centering
\begin{tikzpicture}[thick]
\node (a) at (-1,0) {};
\node (b) at (1,0) {};
\node (c) at (0,1.73) {};
\node (d) at (-2,1.73) {};
\node (e) at (-4,1.73) {};
\node (f) at (-3,0) {};

\draw[fill, black] (a) circle (.05) node[left] {$v_{0}$};
\draw[fill, black] (b) circle (.05) node[right] {$v_{1}$};
\draw[fill, black] (c) circle (.05) node[above] {$v_{2}$};
\draw[fill, black] (d) circle (.05) node[above] {$v_{3}$};
\draw[fill, black] (e) circle (.05) node[above] {$v_{4}$};
\draw[fill, black] (f) circle (.05) node[left] {$v_{5}$};

\draw[bunired,-latex] (a) -- (b);
\draw[bunired,-latex] (c) -- (b);
\draw[bunired,-latex] (a) -- (c);

\draw[bunired,-latex] (c) -- (d);
\draw[bunired,-latex] (f) -- (d);
\draw[bunired,-latex] (d) -- (e);


\begin{scope}[shift = {+(7,0)}]

\node (a) at (-1,0) {};
\node (b) at (1,0) {};
\node (c) at (0,1.73) {};
\node (d) at (-2,1.73) {};
\node (e) at (-4,1.73) {};
\node (f) at (-3,0) {};

\node (ac) at (-.5, .866) {};
\node (bc) at (.5, .866) {};
\node (ab) at (0, 0) {};
\node (cd) at (-1,1.73) {};
\node (de) at (-3,1.73) {};
\node (fd) at (-2.5,0.866) {};

\draw[fill, black] (a) circle (.05) node[left] {$v_{0}$};
\draw[fill, black] (b) circle (.05) node[right] {$v_{1}$};
\draw[fill, black] (c) circle (.05) node[above] {$v_{2}$};
\draw[fill, black] (d) circle (.05) node[above] {$v_{3}$};
\draw[fill, black] (e) circle (.05) node[above] {$v_{4}$};
\draw[fill, black] (f) circle (.05) node[left] {$v_{5}$};

\draw[] (ac) circle (.05) node[above left] {};
\draw[] (bc) circle (.05) node[above right] {};
\draw[] (ab) circle (.05) node[below] {};
\draw[] (cd) circle (.05) node[above] {};
\draw[] (de) circle (.05) node[above] {};
\draw[] (fd) circle (.05) node[left]  {};

\draw[bunired, -latex] (ab) -- (b);
\draw[gray, -latex] (ab) -- (a);

\draw[bunired, -latex] (bc) -- (b);
\draw[gray, -latex] (bc) -- (c);

\draw[gray, -latex] (ac) -- (a);
\draw[bunired, -latex] (ac) -- (c);

\draw[gray, -latex] (cd) -- (c);
\draw[bunired, -latex] (cd) -- (d);

\draw[bunired, -latex] (de) -- (e);
\draw[gray, -latex] (de) -- (d);

\draw[gray, -latex] (fd) -- (f);
\draw[bunired, -latex] (fd) -- (d);
\end{scope}
\end{tikzpicture}
\caption{An oriented graph (left) and the graph associated to its face poset (right). In the latter, we only kept in red the edges which can be used to construct an oriented matching, and shaded in gray the remaining ones.}
\label{fig:cycleoriented}
\end{figure}
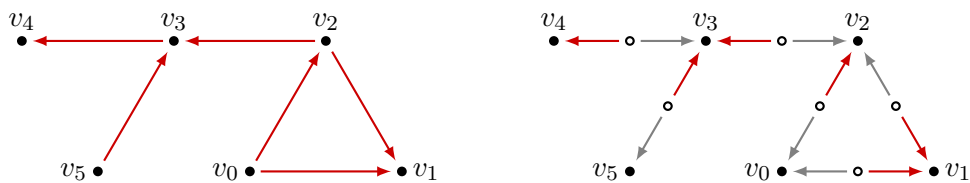

\subsection{Pseudoforests and free-flow orientations}\label{subs:pseudoforests}
The combinatorics of the simplicial complex of oriented matchings is related to certain orientations on graphs. 
We start by recalling some basic definitions.

\begin{defn}
A \emph{pseudotree} is a connected (unoriented) graph containing at most one cycle. A \emph{pseudoforest} is a disjoint union of pseudotrees. A pseudotree not containing any cycle is a \emph{tree}; a \emph{rooted tree} is a tree together with the choice of a preferred vertex called \emph{root}. We will say that a pseudoforest is \emph{rooted} if every tree component is rooted.
\end{defn}

By definition, the set $V(\tG)$ considered as a subgraph, is a spanning pseudoforest. More in general, ``being a spanning pseudoforest'' is a monotone property.

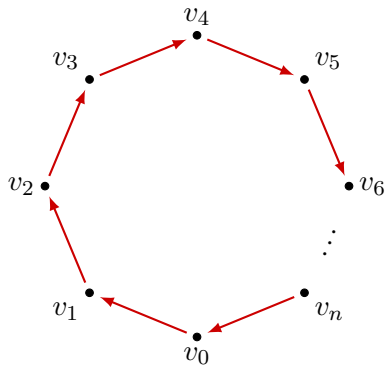
\begin{figure}[ht]
\newdimen\R
\R=2.0cm
\begin{tikzpicture}
\draw[xshift=5.0\R, fill] (270:\R) circle(.05)  node[below] {$v_0$};
\draw[xshift=5.0\R,fill] (225:\R) circle(.05)  node[below left]   {$v_1$};
\draw[xshift=5.0\R,fill] (180:\R) circle(.05)  node[left] {$v_2$};
\draw[xshift=5.0\R,fill] (135:\R) circle(.05)  node[above left] {$v_3$};
\draw[xshift=5.0\R, fill] (90:\R) circle(.05)  node[above] {$v_4$};
\draw[xshift=5.0\R,fill] (45:\R) circle(.05)  node[above right] {$v_5$};
\draw[xshift=5.0\R,fill] (0:\R) circle(.05)  node[right] {$v_6$};
\draw[xshift=5.0\R,fill] (315:\R) circle(.05)  node[below right] {$v_{n}$};

\node[xshift=5.0\R] (v0) at (270:\R) { };
\node[xshift=5.0\R] (v1) at (225:\R) { };
\node[xshift=5.0\R] (v2) at (180:\R) { };
\node[xshift=5.0\R] (v3) at (135:\R) { };
\node[xshift=5.0\R] (v4) at (90:\R) { };
\node[xshift=5.0\R] (v5) at (45:\R) { };
\node[xshift=5.0\R] (v6) at (0:\R) { };
\node[xshift=5.0\R] (vn) at (315:\R) { };

\draw[thick, bunired, -latex] (v0)--(v1);
\draw[thick, bunired, -latex] (v1)--(v2);
\draw[thick, bunired, -latex] (v2)--(v3);
\draw[thick, bunired, -latex] (v3)--(v4);
\draw[thick, bunired, -latex] (v4)--(v5);
\draw[thick, bunired, -latex] (v5)--(v6);
\draw[thick, bunired, -latex] (vn)--(v0);
\draw[xshift=4.95\R,fill] (337.5:\R)  node {$\cdot$} ;
\draw[xshift=4.95\R,fill] (333:\R)  node {$\cdot$} ;
\draw[xshift=4.95\R,fill] (342:\R)  node {$\cdot$} ;
\end{tikzpicture}
\caption{The coherently oriented cycle $\tP_n$. }
\label{fig:poly}
\end{figure}

\begin{defn}\label{def:freeflow}
We say that an orientation $\mathfrak{o}$ on a rooted tree is \emph{free-flow} if the edges of the tree are all oriented away from the root. If $\tG$ is a pseudotree which is not a tree, $\mathfrak{o}$ is free-flow if the unique cycle of $\tG$ is a coherently oriented cycle (see Figure~\ref{fig:poly}), and all the remaining edges are oriented away from the cycle (see Figure~\ref{fig:freeflow}).
\end{defn}

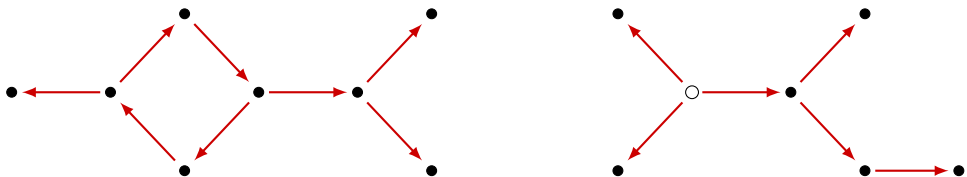
\begin{figure}
    \centering
    \begin{subfigure}{0.4\textwidth}
     \centering
    \begin{tikzpicture}[scale= 1.3]
     \node (a) at (0,0) {};
     \node (b) at (1.5,0) {};
     \node (c) at (.75,.8) {};
     \node (d) at (-1,0) {};
     \node (e) at (.75,-.8) {};
     \node (f) at (2.5,0) {};
     \node (g) at (3.25,.8) {};
     \node (h) at (3.25,-.8) {};

     \draw[fill] (a) circle (.05);
     \draw[fill] (b) circle (.05);
     \draw[fill] (c) circle (.05);
     \draw[fill] (d) circle (.05);
     \draw[fill] (e) circle (.05);
     \draw[fill] (f) circle (.05);
     \draw[fill] (g) circle (.05);
     \draw[fill] (h) circle (.05);
     
     \draw[thick, -latex, bunired] (a) -- (c);
     \draw[thick, -latex, bunired] (c) -- (b);
     \draw[thick, -latex, bunired] (e) -- (a);
     \draw[thick, -latex, bunired] (a) -- (d);
     \draw[thick, -latex, bunired] (b) -- (e);
     \draw[thick, -latex, bunired] (b) -- (f);
     \draw[thick, -latex, bunired] (f) -- (g);
     \draw[thick, -latex, bunired] (f) -- (h);
    \end{tikzpicture}
    \end{subfigure}
    \hspace{0.05\textwidth}
    \begin{subfigure}{0.4\textwidth}
     \centering
    \begin{tikzpicture}[scale= 1.3]
     \node (b) at (1.5,0) {};
     \node (c) at (.75,.8) {};
     \node (d) at (4.2,-.8) {};
     \node (e) at (.75,-.8) {};
     \node (f) at (2.5,0) {};
     \node (g) at (3.25,.8) {};
     \node (h) at (3.25,-.8) {};

     \draw[] (b) circle (.065);
     \draw[fill] (c) circle (.05);
     \draw[fill] (d) circle (.05);
     \draw[fill] (e) circle (.05);
     \draw[fill] (f) circle (.05);
     \draw[fill] (g) circle (.05);
     \draw[fill] (h) circle (.05);

     \draw[thick, -latex, bunired] (b) -- (c);
     \draw[thick, -latex, bunired] (h) -- (d);
     \draw[thick, -latex, bunired] (b) -- (e);
     \draw[thick, -latex, bunired] (b) -- (f);
     \draw[thick, -latex, bunired] (f) -- (g);
     \draw[thick, -latex, bunired] (f) -- (h);  \end{tikzpicture}
    \end{subfigure}
    \caption{Free-flow orientations on a pseudo-tree and a tree. The root of the tree is shown in white.}
    \label{fig:freeflow}
\end{figure}

In particular, it follows that a pseudotree that is not a tree has exactly two free-flow orientations, obtained from one another by inverting the orientation on the cycle. On the other hand, a tree with $n$ vertices has exactly $n$ free-flow orientations, each determined by the choice of a vertex acting as the unique root.

For an oriented graph $\tG$, let $v\in V(\tG)$ be a vertex. The \emph{indegree} (respectively \emph{outdegree}) \emph{of $v$}, denoted by $\mathrm{indeg}(v)$, is the number of edges of $\tG$ whose target (respectively source) is $v$.
It is straightforward to obtain the following characterisation of the connected components of a free-flow pseudoforest, which will be used throughout the rest of this section. 
\begin{lem}\label{lem:pseudoindegree}
Let $\tG$ be an oriented graph. Then $\tG$ is a free-flow pseudoforest if and only if the indegree of each vertex is either $0$ or $1$.  
\end{lem}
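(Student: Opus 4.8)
The plan is to prove both implications of the biconditional directly, using the structural description of free-flow orientations from Definition~\ref{def:freeflow}.

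For the forward direction, suppose $\tG$ is a free-flow pseudoforest. Since a pseudoforest is a disjoint union of pseudotrees, and indegree is a local quantity computed componentwise, it suffices to check each pseudotree component. If a component is a rooted tree with the free-flow orientation, every edge points away from the root; then the root has indegree $0$ (no edge points toward it, since every edge points away from the root along the unique path), and every non-root vertex $v$ has exactly one incoming edge, namely the last edge on the unique path from the root to $v$, so $\mathrm{indeg}(v)=1$. If a component is a pseudotree with a (unique) cycle, the free-flow orientation makes the cycle coherently oriented---so each vertex on the cycle receives exactly one incoming edge from the cycle---and the remaining edges are oriented away from the cycle; a vertex on the cycle gets no extra incoming edge (the tree-branches point outward), so it has indegree $1$, and a vertex off the cycle has indegree $1$ by the same path argument as in the tree case (its unique incoming edge is the last edge on the path from the cycle to it). In all cases every indegree is $0$ or $1$.

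For the converse, suppose every vertex of $\tG$ has indegree $0$ or $1$. First I would bound the number of edges: $|E(\tG)| = \sum_{v} \mathrm{indeg}(v) \leq |V(\tG)|$, with the contribution of each connected component $C$ bounded by $|V(C)|$. A connected graph $C$ with $|E(C)| \leq |V(C)|$ is a pseudotree (it has at most one independent cycle), so $\tG$ is a pseudoforest. It then remains to verify that the induced orientation on each component is free-flow. Work componentwise. If a component $C$ is a tree, then $|E(C)| = |V(C)|-1 < |V(C)| = \sum_{v\in C}\mathrm{indeg}(v)$ forces at least one vertex $r$ of indegree $0$; since the remaining $|V(C)|-1$ vertices each have indegree at most $1$ and the indegrees sum to $|E(C)| = |V(C)|-1$, exactly one vertex has indegree $0$ and all others have indegree $1$. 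A short argument---following the unique path from $r$ to any vertex and checking that each edge must be traversed in the forward direction, else its head would be a second indegree-$0$ vertex or some vertex would accumulate indegree $2$---shows every edge points away from $r$, i.e.\ the orientation is free-flow with root $r$. If a component $C$ is a pseudotree with a cycle, then $|E(C)| = |V(C)|$, so $\sum_{v\in C}\mathrm{indeg}(v) = |V(C)|$ and every vertex has indegree exactly $1$. Restricting to the cycle, each cycle vertex already has its unique incoming edge; I would argue this incoming edge must lie on the cycle (a chord or pendant edge into a cycle vertex would be a second incoming edge), which forces the cycle to be coherently oriented, and then every off-cycle edge points away from the cycle by the same path/indegree-counting argument as before.

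The main obstacle---really the only non-bookkeeping point---is the rigidity argument showing that ``all indegrees $\leq 1$ plus the right edge count'' forces the orientation to be \emph{exactly} the free-flow one rather than merely some orientation with small indegrees; concretely, ruling out configurations where a cycle vertex receives its single incoming edge from a pendant branch (leaving a cycle edge ``backwards'') or where a tree has an edge pointing toward the root. Both cases are handled by the observation that in a pseudotree removing or misorienting one edge relative to the free-flow pattern creates either a vertex of indegree $2$ or an extra vertex of indegree $0$, contradicting the counting $\sum_v \mathrm{indeg}(v) = |E(C)|$. I would phrase this cleanly by orienting the component's unique cycle (if present) first, then processing the attached trees by induction on distance from the cycle (or from the root), at each step noting that the unique edge connecting a new vertex to the already-processed part must be oriented into the new vertex to supply its mandatory indegree $1$.
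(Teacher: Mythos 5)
Your proof is correct and follows essentially the same route as the paper: the forward direction is read off from Definition~\ref{def:freeflow}, and the converse uses the same count $|E(\tG')|=\sum_v \mathrm{indeg}(v)\le |V(\tG')|$ on each connected component (the paper phrases it via $\chi(\tG')\ge 0$) to conclude each component is a tree or pseudotree. The only difference is that you spell out the final rigidity step -- that indegrees $\le 1$ force the orientation to be free-flow -- which the paper dismisses as ``easy to see''; your counting-plus-induction argument for that step is sound.
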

\begin{proof}
One direction is immediate: if $\tG$ is a free-flow pseudoforest, then by definition the indegree of its vertices is less or equal to one. 
Conversely, for $\tG'$ a connected component of $\tG$, let $V = |V(\tG')|$ and $E=|E(\tG')| = \sum_{v\in V(\tG')} \mathrm{indeg}(v)$. By assumption $\chi(\tG') = V - E \ge 0$, but since~$\tG'$ is connected (hence in particular homotopic to a wedge of circles), we have that $\chi(\tG') \le 1$. Therefore, as an unoriented graph, $\tG'$ is either a tree or a pseudotree.  It is now easy to see that the requirement $\mathrm{indeg}(v) \le 1$ for all vertices implies that the orientation is free-flow.
\end{proof}

\begin{prop}\label{prop:simplessi_psuedoforeste}
Let $\tG$ be an oriented graph.
Simplices in $\mat(\tG)$ are in bijection with rooted spanning pseudoforests in $\tG$ whose induced orientation is free-flow. In particular, $\mat(\tG)$ consists of a unique simplex (of dimension $|E(\tG)|-1$) if and only if $\tG$ is a free-flow pseudotree.
\end{prop}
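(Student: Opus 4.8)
The plan is to set up an explicit bijection between simplices of $\mat(\tG)$ and rooted spanning free-flow pseudoforests, and then read off the ``unique simplex'' corollary as a special case. First I would unwind the definitions: a simplex of $\mat(\tG)$ is an oriented matching $m$, that is, a set of pairwise disjoint edges of $\hasse(\tG)$ each of the form $(e,{\tt t}(e))$ for $e \in E(\tG)$. Disjointness here means two things: no two chosen edges share the barycentre $e$ (automatic, since each $e$ gives only one such edge), and no two chosen edges share a target vertex $v \in V(\tG)$. So an oriented matching is precisely a choice of a subset $S \subseteq E(\tG)$ such that the map $S \to V(\tG)$, $e \mapsto {\tt t}(e)$, is injective.

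Next I would describe the subgraph induced by $m$. Given such an $S$, let $\tH_S \subseteq \tG$ be the spanning subgraph with edge set $S$. Injectivity of $e \mapsto {\tt t}(e)$ says exactly that every vertex of $\tH_S$ has indegree at most $1$. By Lemma~\ref{lem:pseudoindegree}, this is equivalent to $\tH_S$ being a free-flow pseudoforest (as a spanning subgraph of $\tG$, with the induced orientation). To promote this to a \emph{rooted} pseudoforest, I would observe that a tree component $T$ of $\tH_S$ has a unique vertex of indegree $0$ in $T$ (since $|E(T)| = |V(T)|-1$ and all indegrees are $\le 1$, exactly one vertex has indegree $0$), and declare that vertex the root; a pseudotree component that is a genuine pseudotree has all indegrees equal to $1$ and needs no root. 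This assignment $m = S \mapsto \tH_S$ (with the canonical rooting just described) is the candidate bijection. For the inverse, given a rooted spanning free-flow pseudoforest $\tF$ in $\tG$, take $S = E(\tF)$; the free-flow condition forces $e \mapsto {\tt t}(e)$ to be injective on $S$, so $S$ is an oriented matching. These two constructions are visibly mutually inverse, because both $\tH_{(-)}$ and $E(-)$ only remember the edge set, and the rooting on a free-flow pseudoforest is itself uniquely determined by the orientation — there is no extra choice. I should note the one subtlety: the statement phrases the correspondence with ``rooted'' pseudoforests, and I must check that on the pseudoforest side the rooting data is not genuinely free but is forced, so that the count is right; this is exactly the observation that indegree-$0$ vertices of tree components are unique.

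Finally, for the ``in particular'' clause: $\mat(\tG)$ is a single simplex iff it has a top-dimensional face using all of $E(\tG)$, i.e.\ iff $S = E(\tG)$ is itself an oriented matching, i.e.\ iff $e \mapsto {\tt t}(e)$ is injective on all of $E(\tG)$, i.e.\ (by Lemma~\ref{lem:pseudoindegree}) iff $\tG$ itself is a free-flow pseudoforest. Since $\mat(\tG)$ is a simplicial complex, having $E(\tG)$ as a face means every subset of $E(\tG)$ is a face, so the complex is exactly the simplex on $|E(\tG)|$ vertices, of dimension $|E(\tG)|-1$; conversely if $\mat(\tG)$ is a single simplex then its (unique) facet must be all of $E(\tG)$. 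The statement says ``pseudotree'' rather than ``pseudoforest'' here, which presupposes $\tG$ connected (consistent with the running assumptions in this subsection); a connected free-flow pseudoforest is a free-flow pseudotree, so the two phrasings agree. The only mild obstacle is bookkeeping: being careful that ``matching on a simplicial complex'' in Definition~\ref{def:complexes} unwinds to the injectivity-on-targets condition, and that the induced orientation on the spanning subgraph is literally the restriction of $\oo$ — after that, everything is a direct translation through Lemma~\ref{lem:pseudoindegree}, with no real analytic or topological content.
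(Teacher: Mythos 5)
Your proposal is correct and follows essentially the same route as the paper: identifying an oriented matching with a subset $S\subseteq E(\tG)$ on which $e\mapsto {\tt t}(e)$ is injective, and translating this via Lemma~\ref{lem:pseudoindegree} into the free-flow pseudoforest condition, exactly as in the paper's proof. You merely spell out two points the paper leaves implicit — that the rooting of the tree components is forced by the orientation, and the derivation of the ``in particular'' clause (including the connectedness needed to say ``pseudotree'') — which is fine but not a different argument.
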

\begin{proof}
Let $F$ be a rooted pseudoforest in $\tG$; consider the  edges $E_F$ in ${\tt F}(\tG)$ obtained by connecting the barycentres of the edges in $F$ to their targets.
Lemma~\ref{lem:pseudoindegree} can be used to show that $E_F$ defines a matching on $\tG$, thus a simplex in $\sigma_F \subseteq \mat(\tG)$.

\begin{figure}[ht]
\centering
\begin{tikzpicture}[thick, scale = 0.9]

\begin{scope}[shift = {+(-12,0)}]
\node (a) at (-1,0) {};
\node (b) at (1,0) {};
\node (c) at (0,1.73) {};
\node (d) at (-2,1.73) {};
\node (e) at (-4,1.73) {};
\node (f) at (-3,0) {};
\node (h) at (0,-1.73) {};
\node (i) at (-2,-1.73) {};

\draw[fill, black] (a) circle (.05);
\draw[fill, black] (b) circle (.05);
\draw[fill, black] (c) circle (.05);
\draw[fill, black] (d) circle (.05);
\draw[fill, black] (e) circle (.05);
\draw[fill, black] (f) circle (.05);
\draw[fill, black] (h) circle (.05);
\draw[fill, black] (i) circle (.05);

\draw[bunired, -latex] (b) -- (a);
\draw[bunired, -latex] (c) -- (b);
\draw[bunired, -latex] (a) -- (c);

\draw[bunired, -latex] (a) -- (d);
\draw[bunired, -latex] (f) -- (a);
\draw[bunired, -latex] (d) -- (c);
\draw[bunired, -latex] (d) -- (f);
\draw[bunired, -latex] (f) -- (e);

\draw[bunired, -latex] (h) -- (a);
\draw[bunired, -latex] (h) -- (i);
\draw[bunired, -latex] (a) -- (i);
\end{scope}
\node (a) at (-1,0) {};
\node (b) at (1,0) {};
\node (c) at (0,1.73) {};
\node (d) at (-2,1.73) {};
\node (e) at (-4,1.73) {};
\node (f) at (-3,0) {};
\node (h) at (0,-1.73) {};
\node (i) at (-2,-1.73) {};

\draw[fill, black] (a) circle (.05);
\draw[fill, black] (b) circle (.05);
\draw[fill, black] (c) circle (.05);
\draw[fill, black] (d) circle (.05);
\draw[fill, black] (e) circle (.05);
\draw[fill, black] (f) circle (.05);
\draw (h) circle (.1);
\draw[fill, black] (i) circle (.05);

\draw[very thick, bunired,-latex] (b) -- (a);
\draw[very thick, bunired,-latex] (c) -- (b);
\draw[very thick, bunired,-latex] (a) -- (c);

\draw[very thick, bunired,-latex] (a) -- (d);
\draw[gray, opacity = .75, -latex] (f) -- (a);
\draw[gray, opacity = .75, -latex] (d) -- (c);
\draw[very thick, bunired,-latex] (d) -- (f);
\draw[very thick, bunired,-latex] (f) -- (e);

\draw[gray, opacity = .75, -latex] (h) -- (a);
\draw[very thick, bunired,-latex] (h) -- (i);
\draw[gray, opacity = .75, -latex] (a) -- (i);


\begin{scope}[shift = {+(-6,0)}]

\node (a) at (-1,0) {};
\node (b) at (1,0) {};
\node (c) at (0,1.73) {};
\node (d) at (-2,1.73) {};
\node (e) at (-4,1.73) {};
\node (f) at (-3,0) {};
\node (h) at (0,-1.73) {};
\node (i) at (-2,-1.73) {};

\node (ab) at (0, 0) {};
\node (bc) at (.5, .866) {};
\node (ac) at (-.5, .866) {};
\node (cd) at (-1,1.73) {};
\node (fe) at (-3.5,0.866) {};
\node (fd) at (-2.5,0.866) {};
\node (ad) at (-1.5,0.866) {};
\node (hi) at (-1,-1.73) {};
\node (ai) at (-1.5,-0.866) {};
\node (ah) at (-.5, -0.866) {};
\node (af) at (-2, 0) {};

\draw[fill, black] (a) circle (.05);
\draw[fill, black] (b) circle (.05);
\draw[fill, black] (c) circle (.05);
\draw[fill, black] (d) circle (.05);
\draw[fill, black] (e) circle (.05);
\draw[fill, black] (f) circle (.05);
\draw[fill, black] (h) circle (.05);
\draw[fill, black] (i) circle (.05);

\draw[] (ac) circle (.05) node[above left] {};
\draw[] (bc) circle (.05) node[above right] {};
\draw[] (ab) circle (.05) node[below] {};
\draw[] (ad) circle (.05) node[above] {};
\draw[] (cd) circle (.05) node[above] {};
\draw[] (fe) circle (.05) node[above] {};
\draw[] (fd) circle (.05) node[left]  {};
\draw[] (hi) circle (.05) node[left]  {};
\draw[] (ah) circle (.05) node[below] {};
\draw[] (ai) circle (.05) node[above] {};
\draw[] (af) circle (.05) node[above] {};

\draw[bunired, -latex] (ab) -- (a);
\draw[gray, -latex] (ab) -- (b);

\draw[bunired, -latex] (ad) -- (d);
\draw[gray, -latex] (ad) -- (a);

\draw[bunired, -latex] (bc) -- (b);
\draw[gray, -latex] (bc) -- (c);

\draw[gray, -latex] (ac) -- (a);
\draw[bunired, -latex] (ac) -- (c);

\draw[gray, -latex] (cd) -- (c);
\draw[gray, -latex] (cd) -- (d);

\draw[bunired, -latex] (fe) -- (e);
\draw[gray, -latex] (fe) -- (f);

\draw[bunired,  -latex] (fd) -- (f);
\draw[gray,-latex] (fd) -- (d);

\draw[bunired,  -latex] (hi) -- (i);
\draw[gray,-latex] (hi) -- (h);

\draw[gray, -latex] (ai) -- (i);
\draw[gray, -latex] (ai) -- (a);

\draw[gray, -latex] (ah) -- (h);
\draw[gray, -latex] (ah) -- (a);

\draw[gray, -latex] (af) -- (f);
\draw[gray, -latex] (af) -- (a);
\end{scope}
\end{tikzpicture}
\caption{From left to right: an oriented graph $\tG$, $\hasse(\tG)$ with a (maximal) oriented matching $m\in \mat(\tG)$ highlighted in red, and the rooted free-flow pseudoforest induced by $m$ on $\tG$.}
\label{fig:matchingandpseudoforest}
\end{figure}
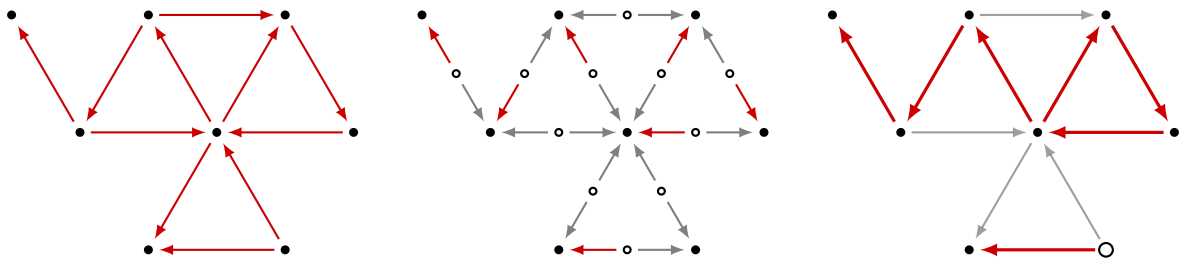

For the converse, let $\sigma \subseteq \mat(\tG)$ be any simplex. We want to show that $\sigma$ uniquely determines a pseudoforest $F_\sigma \subseteq \tG$ (see Figure~\ref{fig:matchingandpseudoforest}), and that moreover the orientation of $F_\sigma$ is free-flow.  Each vertex in $\sigma$ uniquely determines a directed edge in $\tG$, and we take $F_\sigma$ to be the union of all of these edges. 
We can conclude by noting that, as $\sigma \in \mat(\tG)$, the indegree of each vertex in $F_\sigma$ must be $\le 1$. Therefore, by Lemma~\ref{lem:pseudoindegree} $F_\sigma$ is a free-flow pseudoforest.

The second part of the statement follows readily from the previous one.
\end{proof}

Proposition \ref{prop:simplessi_psuedoforeste} implies, in particular, that simplices in $\mor(\tG)$ are in bijection with free-flow oriented spanning forests in~$\tG$. This was the starting point of \cite{MR2179635}, where Kozlov's complex of directed forests was identified with the discrete Morse complex they defined:
\begin{prop}[{\cite[Proposition~3.1]{MR2179635}}]
The set of discrete Morse matchings on an unoriented graph~$\tG$ is in one-to-one correspondence with the set of rooted forests of $\tG$.
\end{prop}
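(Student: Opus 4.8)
The plan is to deduce the bijection from Proposition~\ref{prop:simplessi_psuedoforeste}, cutting down the class of pseudoforests that appear there by means of the filtration $J$. Fix an unoriented graph $\tG$ and recall that $\mor(\tG)=J^{-1}(0)\subseteq\MM(\tG)=\overline{\MM}(\hasse(\tG))$. Given a matching $m\in\MM(\tG)$, every matched edge of $\hasse(\tG)$ joins the barycentre $\bar e$ of an edge $e$ of $\tG$ to one of its two endpoints; call that endpoint the \emph{$m$-head} of $e$, and write $\mathrm{dom}(m)\subseteq E(\tG)$ for the set of edges whose barycentre is matched. Orient $\tG$ by directing each $e\in\mathrm{dom}(m)$ towards its $m$-head (and orienting the remaining edges arbitrarily); call the result $\tG_m$. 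By construction every matched pair of $m$ has the form $(\bar e,{\tt t}(e))$, so $m$ is an oriented matching on $\tG_m$, and Proposition~\ref{prop:simplessi_psuedoforeste} attaches to it a rooted spanning free-flow pseudoforest $F_m\subseteq\tG_m$ with $E(F_m)=\mathrm{dom}(m)$.

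First I would record that $m\mapsto F_m$ (remembering $F_m$ as a spanning subgraph together with the free-flow root of each tree/pseudotree component) has an inverse on the right class of objects. Given a rooted spanning \emph{forest} $(F,\varrho)$ of $\tG$ — equivalently, a spanning forest equipped with a free-flow orientation, by the discussion following Definition~\ref{def:freeflow} — orient $F$ away from the roots and match $\bar e$ with ${\tt t}(e)$ for every $e\in E(F)$; since a free-flow forest has all indegrees $\le 1$ (Lemma~\ref{lem:pseudoindegree}), no vertex is the head of two edges, so this really is an element of $\MM(\tG)$, and unwinding the definitions shows the two assignments are mutually inverse. Hence $m\mapsto F_m$ restricts to a bijection between $\{m\in\MM(\tG):F_m\text{ is a forest}\}$ and the set of rooted spanning forests of $\tG$.

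The crux is then to identify $\{m\in\MM(\tG):F_m\text{ is a forest}\}$ with $\mor(\tG)=J^{-1}(0)$, and here I would unwind the definition of $J$. A directed cycle in $\hasse(\tG)$ with the edges of $m$ reversed must alternate between vertices and edge-barycentres of $\tG$; each vertex-to-barycentre step is a reversed edge of $m$, each barycentre-to-vertex step an unreversed edge, and reading off the edges of $\tG$ traversed identifies such cycles with the directed cycles of $F_m$ (read in the opposite cyclic order, because $\tG_m$ orients each edge towards its $m$-head). Since $F_m$ is a free-flow pseudoforest, its only directed cycles are the coherently oriented cycles of its pseudotree components, exactly one per non-tree component; therefore $J(m)$ equals the number of non-tree components of $F_m$, and $J(m)=0$ precisely when $F_m$ is a forest. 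Combining this with the bijection of the previous paragraph yields the statement.

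The only step I expect to require genuine care is the cycle count in the last paragraph: setting up the bijection between directed cycles of the ``$m$-reversed'' Hasse graph and directed cycles of $F_m$, while correctly tracking which endpoint of each edge is matched and in which direction each cycle is traversed. Everything else is a direct application of Proposition~\ref{prop:simplessi_psuedoforeste} and Lemma~\ref{lem:pseudoindegree}, together with the elementary fact that the free-flow orientations of a forest are in bijection with the choices of one root per component.
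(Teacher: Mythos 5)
Your argument is correct, and it is worth noting how it sits relative to the paper: the paper does not prove this statement at all, but imports it from Chari--Joswig \cite{MR2179635}, remarking only that it is the ``discrete Morse'' specialisation of Proposition~\ref{prop:simplessi_psuedoforeste} (and, for the filtration level, deferring to \cite[Proposition~2.9]{celoria2020filtered}). Your proposal reconstructs exactly that route and fills in the one ingredient the paper outsources: you orient each matched edge of $\tG$ towards its $m$-head so that $m$ becomes an oriented matching on $\tG_m$, invoke Proposition~\ref{prop:simplessi_psuedoforeste} and Lemma~\ref{lem:pseudoindegree} to get the free-flow pseudoforest $F_m$, identify rooted forests with free-flow oriented forests via the root/indegree-$0$ correspondence, and then prove directly that directed cycles in the $m$-reversed Hasse graph alternate matched/unmatched edges and correspond bijectively to the coherently oriented cycles of $F_m$, so that $J(m)=0$ exactly when $F_m$ is a forest. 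That last computation is the content of the cited \cite[Proposition~2.9]{celoria2020filtered}, and your verification of it is sound (a directed cycle can only enter a barycentre through a reversed matched edge, so all its $\tG$-edges lie in $\mathrm{dom}(m)$ and are traversed against their $\tG_m$-orientation). Two minor points of hygiene, both of which you essentially address: $F_m$ is independent of the arbitrary orientation chosen on edges outside $\mathrm{dom}(m)$, since its edge set is $\mathrm{dom}(m)$; and ``rooted forests of $\tG$'' should be read as spanning rooted forests, with isolated vertices rooted at themselves, which costs nothing in the bijection.
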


It is easy to see that, for each orientation $\mathfrak{o} \in \mathcal{O}(\tG)$ and $j \in \bN$, $\mat_j(\orient)\coloneqq \mat(\orient) \cap \MM_j(\tG)$ is a simplicial subcomplex of $\MM_j(\tG)$; in particular this holds for the \emph{oriented discrete Morse matchings} $\mathcal{M}^o(\orient)\coloneqq \mat_0(\orient)$. 
We remark that a simple application of~\cite[Proposition~2.9]{celoria2020filtered} shows that simplices in $\mat_j(\tG)$ are in bijection with free-flow pseudoforests with at most $j$ pseudotrees that are not trees. 
Moreover, 
\begin{equation}\label{eq:matching as union}
\mathrm{M}(\tG) = \bigcup_{\mathfrak{o} \in \mathcal{O}(\tG)} \mat(\orient) \ .
\end{equation}
In other words, all matchings on $\tG$ arise as oriented matchings for some orientation on $\tG$; see Figure~\ref{fig:decomposition} for an example. Furthermore, Equation~\eqref{eq:matching as union} provides a decomposition of the matching complex $\mathrm{M}(\tG) $ in terms of the oriented ones.

\begin{figure}[ht]
\centering
\includegraphics[width=9cm]{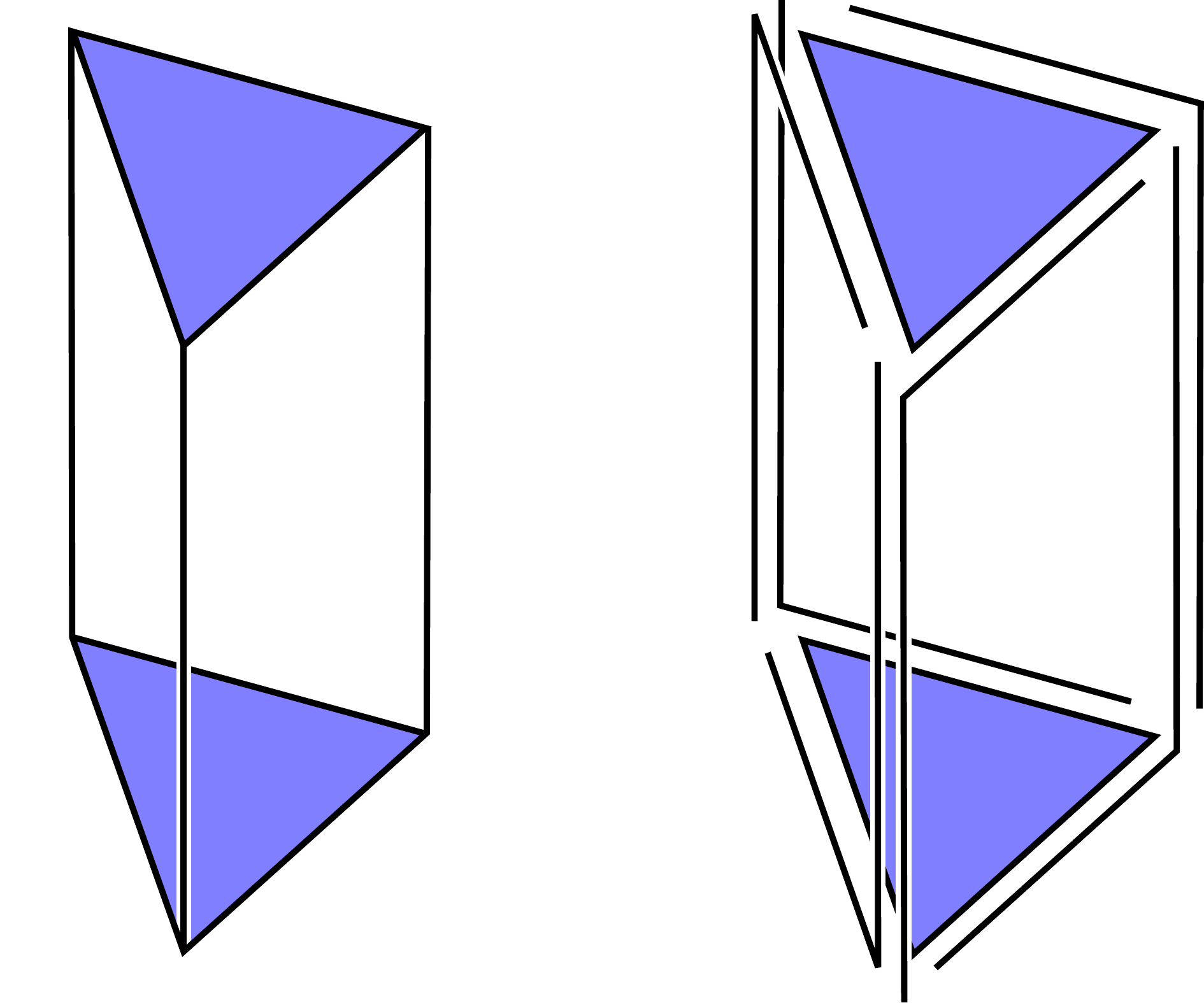}
\caption{The decomposition of the matching complex $\mathrm{M}(\tC_3)$ for the cycle graph of length three (on the left) into its $8$ pieces corresponding to the oriented matching complexes (right). The two $2$-simplices correspond to the cyclic orientations on $\tC_3$, while the other six $1$-dimensional components are induced by all the other possible orientations.}
\label{fig:decomposition}
\end{figure}

\section{Relations between matchings and multipaths}

In this section  we prove that, in certain cases, matchings on graphs and path posets  can be identified. First, we study the case of graph matching complexes on unoriented graphs. Then, we provide an isomorphism between oriented matching complexes on oriented graphs and path posets. 

Call an orientation $\oo\in O(\tG)$ on an unoriented graph $\tG$ \emph{alternating} if there exists a partition~$V\sqcup W$ of $V(\tG)$ such that all elements of  $V$ have indegree $0$ and all elements of $W$ have outdegree $0$. Note that the existence of an alternating orientation implies that $\tG$ is a bipartite graph.

Recall that, for an oriented graph $\tG$, $P(\tG)$ denotes its path poset (\emph{cf.}~Example~\ref{ex:multipath}) and that $F(S)$ denotes the face poset of a simplicial complex $S$. Then, we have the following result: 

\begin{thm}\label{thm:multi=match}
Let $\tG$ be an unoriented graph. Then, we have an isomorphism $F(\overline{\MM}(\tG)) \cong P(\orient)$ if and only if $\mathfrak{o}$ is alternating.
\end{thm}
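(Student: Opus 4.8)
The plan is to analyze the correspondence between faces of the graph matching complex $\overline{\MM}(\tG)$ and multipaths in $\orient$ explicitly, and to identify exactly when this correspondence is an isomorphism of posets. First I would set up the natural candidate map: a graph matching $m\subseteq E(\tG)$ is a set of pairwise disjoint edges, and each such edge, once oriented by $\oo$, becomes a directed edge in $\orient$; thus $m$ determines a spanning subgraph $\phi_\oo(m)\subseteq \orient$. Since the edges of $m$ are pairwise disjoint, every vertex of $\phi_\oo(m)$ has total degree at most one, hence certainly indegree and outdegree at most one, and $\phi_\oo(m)$ is acyclic (it is a disjoint union of single edges and isolated vertices). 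So $\phi_\oo(m)\in P(\orient)$ always, and $\phi_\oo$ is an injective, inclusion-preserving map $F(\overline{\MM}(\tG))\hookrightarrow P(\orient)$ for every orientation $\oo$. The content of the theorem is therefore the \emph{surjectivity} of $\phi_\oo$, i.e.\ the claim that every multipath in $\orient$ is a disjoint union of edges, precisely when $\oo$ is alternating.

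Next I would prove the ``if'' direction. Suppose $\oo$ is alternating, with vertex partition $V\sqcup W$ where every vertex of $V$ has indegree $0$ and every vertex of $W$ has outdegree $0$ (so all edges point from $V$ to $W$). Let $H\in P(\orient)$ be a multipath; I claim each component of $H$ has at most one edge. Indeed, if some component contained a directed path of length $\ge 2$, say $a\to b\to c$, then $b$ would have both indegree and outdegree $1$, forcing $b\notin V$ and $b\notin W$, contradicting the partition. Since $H$ has indegree and outdegree $\le 1$ at every vertex and is acyclic, any component with $\ge 2$ edges would contain such a length-$2$ subpath; hence every component is a single edge or an isolated vertex, so $H=\phi_\oo(m)$ for the matching $m$ consisting of the underlying unoriented edges of $H$. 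Thus $\phi_\oo$ is bijective, and being an inclusion-preserving bijection with inclusion-preserving inverse, it is a poset isomorphism.

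For the ``only if'' direction I would argue contrapositively: assume $\oo$ is not alternating and produce a multipath in $\orient$ that is not a matching, i.e.\ a directed path of length $2$. If $\oo$ is not alternating, then there is no way to partition $V(\tG)$ into indegree-$0$ and outdegree-$0$ vertices; I would want to extract from this a vertex $b$ with $\mathrm{indeg}(b)\ge 1$ and $\mathrm{outdeg}(b)\ge 1$, giving edges $a\to b\to c$, which form a multipath of two edges (it is acyclic since $a\neq c$ as $\tG$ is simple and has no loops, and even if $a=c$ this is excluded by Definition~\ref{def:origraph}). This two-edge multipath is not in the image of $\phi_\oo$, so $\phi_\oo$ is not surjective and $F(\overline{\MM}(\tG))\not\cong P(\orient)$ — here one should note that a bijection of the two posets would have to be $\phi_\oo$, or at least that a cardinality/rank comparison rules out any isomorphism, since $\overline{\MM}(\tG)$ is a subcomplex of $P(\orient)$ via $\phi_\oo$ and the inclusion is proper. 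The main obstacle is precisely this last step: showing that ``not alternating'' forces the existence of a vertex that is simultaneously a non-source and a non-sink. This requires a small combinatorial lemma — essentially that an orientation is alternating if and only if every vertex is a source or a sink — which can be proved by taking $V$ to be the set of vertices of indegree $0$ and $W$ its complement, observing $W$ consists of vertices of indegree $\ge 1$, and checking that $V\sqcup W$ witnesses alternation exactly when no edge has both endpoints in $W$, equivalently when no vertex of $W$ has positive outdegree, equivalently when no vertex has both indegree and outdegree positive. I would also need to handle the edge case where $\tG$ has isolated vertices or multiple components, but these cause no trouble since isolated vertices can be placed in either block of the partition.
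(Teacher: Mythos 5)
Your proposal is correct and follows essentially the same route as the paper: matchings inject into multipaths for any orientation, the alternating partition forces every multipath component to be a single edge or a vertex, and a non-alternating orientation yields a vertex that is both a source and a target, hence a length-two multipath and a strict cardinality gap ruling out any isomorphism. The only difference is that you spell out as an explicit lemma (alternating $\Leftrightarrow$ every vertex is a source or a sink) what the paper dispatches in a one-line parenthetical, which is a reasonable amount of added detail rather than a new idea.
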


\begin{proof}
First, note that every graph matching in $\overline{\MM}(\tG)$ can be regarded as being a multipath in~$\tG_\mathfrak{o}$, independently on the chosen orientation~$\mathfrak{o}\in O(\tG)$.
In particular, $\vert P(\orient) \vert \geq \vert F(\overline{\MM}(\tG)) \vert$.
We claim that, if we fix an arbitrary alternating orientation $\mathfrak{o}\in \mathcal{O}(\tG)$, then every multipath induces a matching on $\orient$.

Since $\oo$ is alternating, there is a partition $V\sqcup W = V(\tG)$ such that all the edges of $\orient$ are of the form $(v,w)$ with  $v\in V,$ and $w\in W$.
We only have to observe that the connected components of any multipath in $\orient$ are either vertices or single edges. For the sake of contradiction, assume there exists at least one component $c$ of a certain multipath $\tH$ that is not a single edge or a vertex. Then, there are at least two edges in $c$ which share a vertex; in particular the target of one edge, which is a vertex in $W$, must be the source of another edge, and therefore it is in $V$. This is a contradiction since $V\cap W$ is empty, and the ``if'' part of the statement follows.

For the converse, assume $\mathfrak{o}$ is not alternating. Then, there exists a vertex which is both a source and a target --
otherwise, the partition of $V(\orient)$ into indegree~$0$ and outdegree~$0$ vertices would imply that $\mathfrak{o}$ is alternating. 
Therefore, there is at least a multipath of length two which is not made of disjoint edges. Thus, $\vert P(\orient) \vert > \vert F(\overline{\MM}(\tG)) \vert$, concluding the proof.
\end{proof}

\begin{cor}
All alternating orientations on a graph have isomorphic path posets.
\end{cor}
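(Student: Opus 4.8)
The plan is to deduce the corollary directly from Theorem~\ref{thm:multi=match}. The statement asserts that if $\oo_1$ and $\oo_2$ are both alternating orientations on an unoriented graph $\tG$, then $P(\tG_{\oo_1}) \cong P(\tG_{\oo_2})$ as posets. Since the theorem gives, for each alternating orientation $\oo$, an isomorphism $F(\overline{\MM}(\tG)) \cong P(\tG_{\oo})$, I would simply compose: $P(\tG_{\oo_1}) \cong F(\overline{\MM}(\tG)) \cong P(\tG_{\oo_2})$. The middle object $F(\overline{\MM}(\tG))$ depends only on the underlying unoriented graph $\tG$, not on any choice of orientation, so there is nothing else to check.

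Concretely, I would first recall that $\tG$ admits at least one alternating orientation only if it is bipartite (as noted just before the theorem), so the corollary is vacuous unless $\tG$ is bipartite; in the bipartite case, fixing the bipartition $V \sqcup W$, reversing all edges gives a second alternating orientation, so the statement has content. Then I would invoke Theorem~\ref{thm:multi=match} twice — once for $\oo_1$ and once for $\oo_2$ — to obtain the two isomorphisms of posets, and conclude by transitivity of isomorphism. A one- or two-sentence proof suffices.

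There is essentially no obstacle here: the only thing to be slightly careful about is that Theorem~\ref{thm:multi=match} is phrased as an ``if and only if,'' so one must extract the forward (``if alternating, then isomorphic'') direction, which is exactly what is needed. No compatibility of the isomorphisms with each other is required since the claim is only about abstract isomorphism of posets. I would write:

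\begin{proof}
If $\tG$ admits no alternating orientation the claim is vacuous, so assume it does. Let $\oo_1$ and $\oo_2$ be alternating orientations on $\tG$. By Theorem~\ref{thm:multi=match}, there are isomorphisms of posets $P(\tG_{\oo_1}) \cong F(\overline{\MM}(\tG))$ and $F(\overline{\MM}(\tG)) \cong P(\tG_{\oo_2})$, since $\overline{\MM}(\tG)$ depends only on the underlying unoriented graph $\tG$. Composing these yields $P(\tG_{\oo_1}) \cong P(\tG_{\oo_2})$.
\end{proof}
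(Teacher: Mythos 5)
Your proposal is correct and follows exactly the intended argument: the corollary is an immediate consequence of Theorem~\ref{thm:multi=match}, applying it once to each alternating orientation and composing the two isomorphisms through the orientation-independent poset $F(\overline{\MM}(\tG))$. This is the same (implicit) proof the paper has in mind, so nothing further is needed.
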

Note however that is it possible to find two orientations (not both alternating) yielding isomorphic path posets.
As an example, consider the two orientations on the ``Y''-shaped graph shown in Figure~\ref{fig:Y}. 

\begin{figure}[ht]
    \centering
    	\begin{tikzpicture}[baseline=(current bounding box.center),scale =.7]
		\tikzstyle{point}=[circle,thick,draw=black,fill=black,inner sep=0pt,minimum width=2pt,minimum height=2pt]
		\tikzstyle{arc}=[shorten >= 8pt,shorten <= 8pt,->, thick]
		
		\node[above] (v0) at (0,0) {$v_0$};
		\draw[fill] (0,0)  circle (.05);
		\node[above] (v1) at (1.5,0) {$v_1$};
		\draw[fill] (1.5,0)  circle (.05);
		\node[above] (v2) at (3,1) {$v_{2}$};
		\draw[fill] (3,1)  circle (.05);
		\node[above] (v3) at (3,-1) {$v_{3}$};
		\draw[fill] (3,-1)  circle (.05);
		
		\draw[thick, bunired, -latex] (0.15,0) -- (1.35,0);
		\draw[thick, bunired, -latex] (1.65,0.05) -- (2.85,0.95);
		\draw[thick, bunired, -latex] (1.65,-0.05) -- (2.85,-0.95);
		
		\begin{scope}[shift = {+(7,0)}]
		\node[above] (v0) at (0,0) {$v_0$};
		\draw[fill] (0,0)  circle (.05);
		\node[above] (v1) at (1.5,0) {$v_1$};
		\draw[fill] (1.5,0)  circle (.05);
		\node[above] (v2) at (3,1) {$v_{2}$};
		\draw[fill] (3,1)  circle (.05);
		\node[above] (v3) at (3,-1) {$v_{3}$};
		\draw[fill] (3,-1)  circle (.05);
		
		\draw[thick, bunired, latex-] (0.15,0) -- (1.35,0);
		\draw[thick, bunired, latex-] (1.65,0.05) -- (2.85,0.95);
		\draw[thick, bunired, latex-] (1.65,-0.05) -- (2.85,-0.95);
        \end{scope}
	\end{tikzpicture}
    \caption{Two non-isomorphic Y-shaped oriented graphs with isomorphic path posets.}
    \label{fig:Y}
\end{figure}
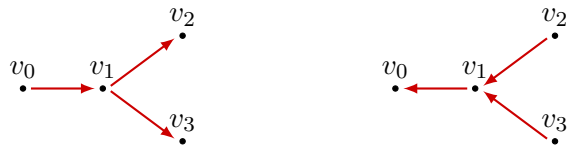

\begin{defn}
For an oriented graph $\tG$, the \emph{multipath complex} $X(\tG)$ is the simplicial complex whose face poset is the path poset $P(\tG)$. 
\end{defn}
The existence of $X(\tG)$ is guaranteed by Remark~\ref{rem:realizability} -- see also~\cite[Definition~6.4]{secondo}. 

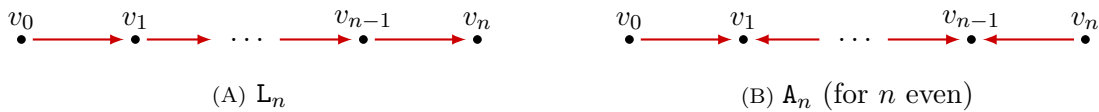
\begin{figure}[ht]
	\begin{tikzpicture}[baseline=(current bounding box.center)]
		\tikzstyle{point}=[circle,thick,draw=black,fill=black,inner sep=0pt,minimum width=2pt,minimum height=2pt]
		\tikzstyle{arc}=[shorten >= 8pt,shorten <= 8pt,->, thick]
		
		\node[above] (v0) at (0,0) {$v_0$};
		\draw[fill] (0,0)  circle (.05);
		\node[above] (v1) at (1.5,0) {$v_1$};
		\draw[fill] (1.5,0)  circle (.05);
		\node[] at (3,0) {\dots};
		\node[above] (v4) at (4.5,0) {$v_{n-1}$};
		\draw[fill] (4.5,0)  circle (.05);
		\node[above] (v5) at (6,0) {$v_{n}$};
		\draw[fill] (6,0)  circle (.05);
		
		\draw[thick, bunired, -latex] (0.15,0) -- (1.35,0);
		\draw[thick, bunired, -latex] (1.65,0) -- (2.5,0);
		\draw[thick, bunired, -latex] (3.4,0) -- (4.35,0);
		\draw[thick, bunired, -latex] (4.65,0) -- (5.85,0);
		
		\node at (3,-.75) {{\scriptsize{(A)}} $\tL_n$};

		\begin{scope}[shift = {+(8,0)}]
		    	\node[above] (v0) at (0,0) {$v_0$};
		\draw[fill] (0,0)  circle (.05);
		\node[above] (v1) at (1.5,0) {$v_1$};
		\draw[fill] (1.5,0)  circle (.05);
		\node[] at (3,0) {\dots};
		\node[above] (v4) at (4.5,0) {$v_{n-1}$};
		\draw[fill] (4.5,0)  circle (.05);
		\node[above] (v5) at (6,0) {$v_{n}$};
		\draw[fill] (6,0)  circle (.05);
		
		\draw[thick, bunired, -latex] (0.15,0) -- (1.35,0);
		\draw[thick, bunired, latex-] (1.65,0) -- (2.5,0);
		\draw[thick, bunired, -latex] (3.4,0) -- (4.35,0);
		\draw[thick, bunired, latex-] (4.65,0) -- (5.85,0);
		
		\node at (3,-.75) {{\scriptsize{(B)} }$\tA_n$ (for $n$ even)};
		\end{scope}
	\end{tikzpicture}
	\caption{{\scriptsize{(A)}} The coherently oriented linear graph $\tL_n$, and {\scriptsize{(B)}} the alternating linear graph $\tA_n$.}
	\label{fig:nstep}
\end{figure}

\begin{example}
The multipath complex $X(\tA_n)$ of the alternating graph on $n$ edges -- see Figure~\ref{fig:nstep} -- is isomorphic to the graph matching complex $\overline{\MM}(\tA_n)$, associated to the unoriented linear graph  underlying $\tA_n$.
This is coherent with the computations in \cite[Proposition~4.6]{KozlovTrees} and in \cite[Corollary~5.7]{secondo}.
\end{example}

The study of the topology of multipath complexes was initiated in \cite[Section~6]{secondo}. All examples provided therein are wedges of spheres.
A consequence of Theorem~\ref{thm:multi=match} implies that this is not always the case. The following proposition provides an affirmative answer to~\cite[Question~6.18]{secondo}.

\begin{prop}
Multipath cohomology with coefficients in $R = \bZ$, for $A=R$, can have torsion. In particular, the multipath complex is not always homotopic to a wedge of spheres.
\end{prop}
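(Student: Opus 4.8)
The proposition claims multipath cohomology with $\bZ$ coefficients (and $A=\bZ$) can have torsion, hence the multipath complex is not always a wedge of spheres. The strategy is to exhibit a single explicit oriented graph $\tG$ for which this happens, and the tool is Theorem~\ref{thm:multi=match}: if $\oo$ is an alternating orientation on an unoriented graph $\tG$, then $F(\overline{\MM}(\tG)) \cong P(\orient)$, so the multipath complex $X(\orient)$ is isomorphic to the graph matching complex $\overline{\MM}(\tG)$. Thus it suffices to find a bipartite graph $\tG$ whose graph matching complex has torsion in (reduced) homology; then, equipping $\tG$ with an alternating orientation $\oo$, the multipath complex $X(\orient)$ inherits that torsion, and since $A=\bZ=R$ the functor $\cF_A$ is the constant functor, so the associated monotone (multipath) cohomology computes ordinary simplicial cohomology of $X(\orient)$ — which then has torsion by the universal coefficient theorem. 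A space whose cohomology has torsion cannot be homotopy equivalent to a wedge of spheres.

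**Key steps, in order.** First, recall from the literature on chessboard and graph matching complexes that torsion does occur: the classical example is the chessboard complex $M_{3,3}$ (equivalently $\overline{\MM}(K_{3,3})$), whose homology $\tilde H_*(M_{3,3};\bZ)$ has a $\bZ/3$ summand in degree $1$ — this is due to Bouc. So I would take $\tG = K_{3,3}$, the complete bipartite graph, which is manifestly bipartite. Second, fix the obvious alternating orientation $\oo$ on $K_{3,3}$: orient every edge from the first part $V=\{v_1,v_2,v_3\}$ to the second part $W=\{w_1,w_2,w_3\}$, so every vertex of $V$ has indegree $0$ and every vertex of $W$ has outdegree $0$; this is alternating by definition. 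Third, apply Theorem~\ref{thm:multi=match} to conclude $F(\overline{\MM}(K_{3,3})) \cong P((K_{3,3})_\oo)$, hence $X((K_{3,3})_\oo) \cong \overline{\MM}(K_{3,3}) = M_{3,3}$ as simplicial complexes. Fourth, observe that for $A=R=\bZ$ the functor $\cF_A$ assigns $\bZ$ to every subgraph and the identity to every order relation (up to sign), so $(C^*_{\mu}((K_{3,3})_\oo;\bZ), d^*)$ is exactly the simplicial cochain complex of $X((K_{3,3})_\oo)$; therefore $\mathrm{H}^*_\mu((K_{3,3})_\oo;\bZ) \cong H^*(M_{3,3};\bZ)$. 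Fifth, quote Bouc's computation that $H^2(M_{3,3};\bZ)$ contains $\bZ/3$ (equivalently, by universal coefficients, $\tilde H_1(M_{3,3};\bZ) \cong \bZ/3 \oplus \bZ^?$ — the precise free rank is immaterial). Finally, note that a finite CW complex whose integral (co)homology contains a nontrivial torsion group cannot be homotopy equivalent to a wedge of spheres, since the (co)homology of a wedge of spheres is free; this settles the ``in particular'' clause and answers \cite[Question~6.18]{secondo}.

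**Main obstacle.** There is no serious mathematical obstacle once Theorem~\ref{thm:multi=match} is in hand — the content is entirely in the reduction to a known torsion computation. The only care needed is bookkeeping: verifying that the chosen orientation on $K_{3,3}$ is genuinely alternating (immediate), checking that the isomorphism of Theorem~\ref{thm:multi=match} is an isomorphism of \emph{posets} and hence induces an isomorphism of the associated cochain complexes with constant $\bZ$ coefficients (this uses that for $A=\bZ$ all the merge maps $\mu$ and identifications $\mathrm{I}$ become $\pm\mathrm{id}_\bZ$, so the cochain complex depends only on the poset and a sign assignment, and the isomorphism type is independent of the sign assignment by \cite[Corollary~3.18]{primo}), and correctly citing the statement that $M_{3,3}$ has $3$-torsion. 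If one prefers an even smaller example, one could instead use $\overline{\MM}(K_{3,4})$ or another bipartite graph with known torsion in its matching complex, but $K_{3,3}$ is the cleanest. The proof is therefore short: reduce via the theorem, identify the cochain complex with simplicial cochains, invoke Bouc.
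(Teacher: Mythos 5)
Your overall strategy is exactly the paper's: pick a complete bipartite graph with an alternating orientation, use Theorem~\ref{thm:multi=match} to identify the multipath complex of $(\tK_{m,n})_\oo$ with the graph matching (chessboard) complex $\overline{\MM}(\tK_{m,n})$, observe that for $A=R=\bZ$ the monotone cochain complex computes (up to the degree shift by one coming from the reduced/augmented complex -- a minor bookkeeping point, cf.~\cite[Theorem~6.8]{secondo}) the simplicial cohomology of that complex, and import a known torsion computation. The gap is in the imported computation. Bouc's $3$-torsion result is about the matching complex $M_7$ of the \emph{complete} graph $K_7$, not about the chessboard complex $M_{3,3}=\overline{\MM}(\tK_{3,3})$; there is no theorem asserting $\bZ/3$ in $H^2(M_{3,3};\bZ)$. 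In fact $M_{3,3}$ has free homology: it is a $2$-complex whose $6$ triangles (the $3\times 3$ permutation matrices) pairwise share no edge and each edge lies in exactly one triangle, so $\tilde H_2=0$ and $\tilde H_1\cong\bZ^4$ with no torsion. Hence with $\tG=\tK_{3,3}$ (and likewise $\tK_{3,4}$) your argument produces no torsion at all, and the key step ``quote Bouc'' fails.

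The fix is to use the reference the paper actually relies on: by \cite[Theorem~1.7]{torsion} (Shareshian--Wachs), chessboard complexes $\overline{\MM}(\tK_{m,n})$ do have $3$-torsion for suitable $m,n$, the minimal example being $m=n=5$. Replacing $\tK_{3,3}$ by $\tK_{5,5}$ (with any alternating orientation, e.g.\ all edges oriented from one part to the other) and citing that theorem, the rest of your reduction goes through verbatim and coincides with the paper's proof: the torsion in $\overline{\MM}(\tK_{5,5})$ transfers to the multipath complex via Theorem~\ref{thm:multi=match}, constant coefficients give the (shifted, reduced) simplicial cohomology, and a finite complex with torsion in its integral cohomology cannot be homotopy equivalent to a wedge of spheres.
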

\begin{proof}
Denote by~${\tt K}_{n,m}$ the complete bipartite graph on $m,n$ vertices.
By \cite[Theorem~1.7]{torsion}, the integral homology of the graph matching complex of $\overline{\MM}({\tt K}_{n,m})$ can have torsion for certain values of $m$ and $n$ (the minimal example being $m=n=5$, containing $3$-torsion).  Therefore, $\overline{\MM}({\tt K}_{n,m})$ is not necessarily homotopic to a wedge of spheres. Theorem~\ref{thm:multi=match} implies that, for any alternating orientation $\oo$ on ${\tt K}_{n,m}$, we have $$X((\tK_{m,n})_{\oo})\cong \overline{\MM}({\tt K}_{n,m}) \ .$$ 
The multipath homology for $A = R$ is (up to a shift by $1$ in homological degree)  the reduced simplicial cohomology of the multipath complex -- \cite[Theorem~6.8]{secondo}.
The statement follows from the above isomorphism.
\end{proof}

We now want to interpret the poset of oriented matchings on a oriented graph as the path poset of a suitable oriented graph.

\begin{defn}\label{def:G_sr}
Let $\tG$ be an oriented graph, its \emph{source resolution} is the oriented graph $\tG_{sr}$ whose vertices are:
\[ V(\tG_{sr}) \coloneqq \{ v\in V(\tG) \mid v={\tt t}(e) \text{ for some } e \in E(\tG)  \} \cup \{ (v,e)\in V(\tG) \times E(\tG) \mid {\tt s}(e) = v\},\]
and whose edges are
\[ E(\tG_{sr}) \coloneqq \{ \left( (v,e) , w\right) \mid v,w \in V(\tG),\, e\in E(\tG)\text{ and } {\tt s}(e) = v, {\tt t}(e) = w  \}.  \]
Intuitively, we are splitting the sources of the edges in $\tG$ (see the top-right part of Figure~\ref{fig:sourceres}).
\end{defn}

\begin{figure}
    \centering
    \begin{subfigure}{0.4\textwidth}
     \centering
    \begin{tikzpicture}[scale= 1.2]
     \node (a) at (0,0) {};
     \node (b) at (1,0) {};
     \node (c) at (.5,.866) {};
     
     \node[below left] at (0,0) {$v_1$};
     \node[below right] at (1,0) {$v_2$};
     \node[above] at (.5,.866) {$v_3$};
     
     \draw[fill] (a) circle (.05);
     \draw[fill] (b) circle (.05);
     \draw[fill] (c) circle (.05);
     
     \draw[thick, -latex, bunired] (a) -- (b);
     \draw[thick, -latex, bunired] (b) -- (c);
     \draw[thick, -latex, bunired] (a) -- (c);
    \end{tikzpicture}
    \subcaption{$\tG$} \label{A}
    \end{subfigure}
    \hspace{0.05\textwidth}
    \begin{subfigure}{0.4\textwidth}
     \centering
    \begin{tikzpicture}[scale= 1.3]
     \node (a) at (0,-.5) {};
     \node (b) at (1,-.5) {};
     \node (c) at (.5,.866) {};
     \node (d) at (0,0) {};
     \node (e) at (1,0) {};

     \node[left] at (0,0) {$(v_1, (v_1,v_3))$};
     \node[right] at (1,0) {$(v_2,(v_2,v_3))$};
     \node[above] at (.5,.866) {$v_3$};
     \node[left] at (0,-.5) {$(v_1, (v_1,v_2))$};
     \node[right] at (1,-.5) {$v_2$};
     
     \draw[fill] (a) circle (.05);
     \draw[fill] (b) circle (.05);
     \draw[fill] (c) circle (.05);
     \draw[fill] (e) circle (.05);
     \draw[fill] (d) circle (.05);

     \draw[thick, -latex, bunired] (a) -- (b);
     \draw[thick, -latex, bunired] (e) -- (c);
     \draw[thick, -latex, bunired] (d) -- (c);
    \end{tikzpicture}
    \subcaption{$\tG_{sr}$}
    \end{subfigure}
    ~\\
    \vspace{.5em}
    \begin{subfigure}{0.4\textwidth}
     \centering
    \begin{tikzpicture}[scale= 1.2]
    
    \begin{scope}[shift ={+(-1,3)}, scale = .5]
     \node (a) at (0,-.5) {};
     \node (b) at (1,-.5) {};
     \node (c) at (.5,.866) {};
     \node (d) at (0,0) {};
     \node (e) at (1,0) {};

     \draw[fill] (a) circle (.05);
     \draw[fill] (b) circle (.05);
     \draw[fill] (c) circle (.05);
     \draw[fill] (e) circle (.05);
     \draw[fill] (d) circle (.05);
     
     \draw[thick, -latex, bunired] (a) -- (b);
     \draw[thick, -latex, bunired] (e) -- (c);
     \draw[thick, -latex, gray, opacity =.5] (d) -- (c);
     \node (H_bc) at (-1,0) {};
    \end{scope}
    \begin{scope}[shift ={+(1,3)}, scale = .5]
     \node (a) at (0,-.5) {};
     \node (b) at (1,-.5) {};
     \node (c) at (.5,.866) {};
     \node (d) at (0,0) {};
     \node (e) at (1,0) {};

     \draw[fill] (a) circle (.05);
     \draw[fill] (b) circle (.05);
     \draw[fill] (c) circle (.05);
     \draw[fill] (e) circle (.05);
     \draw[fill] (d) circle (.05);
     
     \draw[thick, -latex, bunired] (a) -- (b);
     \draw[thick, -latex, gray, opacity =.5] (e) -- (c);
     \draw[thick, -latex, bunired] (d) -- (c);
     \node (H_ac) at (-1,0) {};
    \end{scope}
        \begin{scope}[shift ={+(0,1)}, scale = .5]
     \node (a) at (0,-.5) {};
     \node (b) at (1,-.5) {};
     \node (c) at (.5,.866) {};
     \node (d) at (0,0) {};
     \node (e) at (1,0) {};

     \draw[fill] (a) circle (.05);
     \draw[fill] (b) circle (.05);
     \draw[fill] (c) circle (.05);
     \draw[fill] (e) circle (.05);
     \draw[fill] (d) circle (.05);
     
     \draw[thick, -latex, bunired] (a) -- (b);
     \draw[thick, -latex, gray, opacity =.5] (e) -- (c);
     \draw[thick, -latex, gray, opacity =.5] (d) -- (c);
    \end{scope}
    \draw[very thick] (-0.1,1.5) -- (-.65,2.5);
    \draw[very thick] (0.55,1.5) -- (1.1,2.5);
    
    \draw[very thick] (-1.45,1.5) -- (-.9,2.5);
    \draw[very thick] (1.9,1.5) -- (1.35,2.5);
    
    \draw[very thick] (-0.1,-.5) -- (-1.45,.35);
    \draw[very thick] (0.55,-.5) -- (1.9,.35);
    
    \draw[very thick] (0.25,.5) -- (0.25,-.25);
     \begin{scope}[shift ={+(-2,1)}, scale = .5]
     \node (a) at (0,-.5) {};
     \node (b) at (1,-.5) {};
     \node (c) at (.5,.866) {};
     \node (d) at (0,0) {};
     \node (e) at (1,0) {};

     \draw[fill] (a) circle (.05);
     \draw[fill] (b) circle (.05);
     \draw[fill] (c) circle (.05);
     \draw[fill] (e) circle (.05);
     \draw[fill] (d) circle (.05);
     
     \draw[thick, -latex, gray, opacity =.5] (a) -- (b);
     \draw[thick, -latex, bunired] (e) -- (c);
     \draw[thick, -latex, gray, opacity =.5] (d) -- (c);
    \end{scope}
      \begin{scope}[shift ={+(2,1)}, scale = .5]
     \node (a) at (0,-.5) {};
     \node (b) at (1,-.5) {};
     \node (c) at (.5,.866) {};
     \node (d) at (0,0) {};
     \node (e) at (1,0) {};

     \draw[fill] (a) circle (.05);
     \draw[fill] (b) circle (.05);
     \draw[fill] (c) circle (.05);
     \draw[fill] (e) circle (.05);
     \draw[fill] (d) circle (.05);

     \draw[thick, -latex, gray, opacity =.5] (a) -- (b);
     \draw[thick, -latex, gray, opacity =.5] (e) -- (c);
     \draw[thick, -latex, bunired] (d) -- (c);
    \end{scope}
      \begin{scope}[shift ={+(0,-1)}, scale = .5]
     \node (a) at (0,-.5) {};
     \node (b) at (1,-.5) {};
     \node (c) at (.5,.866) {};
     \node (d) at (0,0) {};
     \node (e) at (1,0) {};

     \draw[fill] (a) circle (.05);
     \draw[fill] (b) circle (.05);
     \draw[fill] (c) circle (.05);
     \draw[fill] (e) circle (.05);
     \draw[fill] (d) circle (.05);
 
     \draw[thick, -latex, gray, opacity =.5] (a) -- (b);
     \draw[thick, -latex, gray, opacity =.5] (e) -- (c);
     \draw[thick, -latex, gray, opacity =.5] (d) -- (c);
    \end{scope}
    \end{tikzpicture}
    \subcaption{The path poset of $\tG_{sr}$.}
    \end{subfigure}
    \caption{{\scriptsize{(A)}} A graph, {\scriptsize{(B)}} its source resolution and {\scriptsize{(C)}} the path poset of its source resolution. }
    \label{fig:sourceres}
\end{figure}
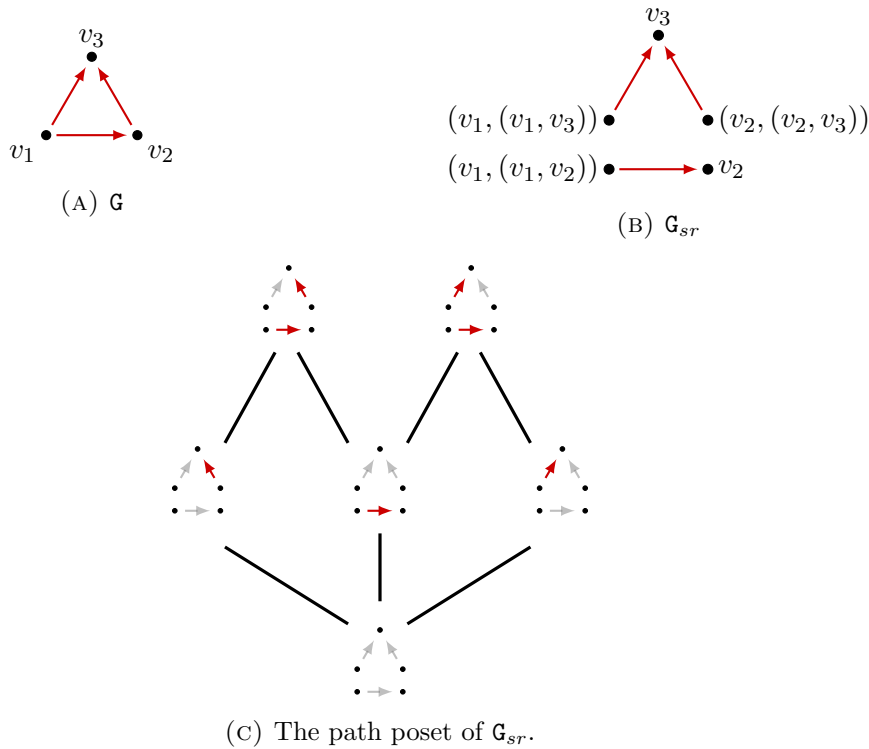
\begin{rem}\label{rem:multipaths_in_sr}
Given a graph $\tG$, each connected component of $\tG_{sr}$ is a sink. Therefore, multipaths in $\tG_{sr}$ are collections of edges whose targets are pairwise distinct.
\end{rem}

It is not hard to see that the association $\tG \mapsto P(\tG)$ is functorial (\emph{cf}.~\cite[Remark~2.33]{primo}).
This association can be promoted to a functor $\tG\mapsto P(\tG_{sr})$, with respect to regular graph morphisms. 
To this end, we have to prove that a regular morphism of oriented graphs $f\colon \tG\to \tG'$ induces a regular morphism $f_{sr}\colon  \tG_{sr}\to\tG'_{sr}$. 
Recall that a vertex $v_{sr}$ in $\tG_{sr}$ is either of the form $v\in V(\tG)$ or of the form $(v,e)$, where $v$ is the source of the edge $e\in E(\tG)$. Define
\[ f_{sr} :  V(\tG_{sr}) \to V(\tG'_{sr})\;\;\;\; v_{sr} \mapsto \begin{cases}
f(v) & \text{if }v_{sr} = v \in V(\tG);\\
(f(v), f(e)) & \text{if } v_{sr} = (v,e) \in V(\tG)\times E(\tG).
\end{cases}\] 
The definition of $f_{sr}$ is well-posed; if $v\in V(\tG_{sr})$, then $v$ is the target of at least one edge in~$\tG$. Since $f$ is regular, then $f(v)$ is the target of at least one edge in $\tG'$. As a consequence, $f(v)$ is a vertex of $\tG'_{sr}$. Similarly, one can show that if $(v,e)\in V(\tG_{sr})$, then $(f(v),f(e)) \in V(\tG'_{sr})$.
It follows directly from the definitions that $f_{sr}$ is a regular morphism of oriented graphs, and that $f\mapsto f_{sr}$ preserves compositions, proving the desired functoriality.

\begin{prop}\label{prop:or_mat=multi_sr}
Let $\tG$ be an oriented graph.
There is an isomorphism  $F(\MM^o(\tG)) \cong P(\tG_{sr})$  between the face poset of $\MM^o(\tG)$ and the path poset of $\tG_{sr}$.
\end{prop}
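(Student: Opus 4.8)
The plan is to construct an explicit, order-preserving bijection between the simplices of $\MM^o(\tG)$ and the multipaths of $\tG_{sr}$, and verify it respects inclusions in both directions. The natural candidate map comes straight from unwinding the definitions: a simplex $\sigma \in \MM^o(\tG)$ is, by Definition~\ref{def:omatch}, a set of pairwise disjoint edges of $\hasse(\tG)$ each of the form $(e, {\tt t}(e))$ for $e\in E(\tG)$; equivalently (as in Proposition~\ref{prop:simplessi_psuedoforeste}) $\sigma$ picks out a subset $S_\sigma \subseteq E(\tG)$ of edges of $\tG$ such that each vertex of $\tG$ is the target of at most one edge in $S_\sigma$. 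On the other side, an edge of $\tG_{sr}$ is of the form $((v,e),w)$ with ${\tt s}(e)=v$, ${\tt t}(e)=w$, so edges of $\tG_{sr}$ are in canonical bijection with edges of $\tG$ via $((v,e),w)\leftrightarrow e$. By Remark~\ref{rem:multipaths_in_sr}, a multipath in $\tG_{sr}$ is precisely a set of edges of $\tG_{sr}$ with pairwise distinct targets — and since the target of $((v,e),w)$ is $w={\tt t}(e)$, this says exactly: a set of edges of $\tG$ with pairwise distinct targets. So on the nose, both posets are identified with $\{\, S\subseteq E(\tG) : \text{each } v\in V(\tG) \text{ is the target of at most one edge in } S\,\}$, ordered by inclusion.

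Concretely, I would define $\Phi\colon F(\MM^o(\tG))\to P(\tG_{sr})$ by sending a matching $m=\{(e_1,{\tt t}(e_1)),\dots,(e_k,{\tt t}(e_k))\}$ to the set of edges $\{((\,{\tt s}(e_1),e_1),{\tt t}(e_1)),\dots,((\,{\tt s}(e_k),e_k),{\tt t}(e_k))\}$ of $\tG_{sr}$. First I would check $\Phi(m)$ is actually a multipath: its edges have pairwise distinct sources (the sources $(\,{\tt s}(e_i),e_i)$ are distinct since the $e_i$ are distinct) and pairwise distinct targets — the latter because $m$ being a matching on $\hasse(\tG)$ forces the ${\tt t}(e_i)$ to be distinct, as two edges $(e_i,{\tt t}(e_i))$ and $(e_j,{\tt t}(e_j))$ sharing a vertex would have to share the vertex ${\tt t}(e_i)={\tt t}(e_j)$. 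Hence indegrees and outdegrees in $\Phi(m)$ are at most one, and $\tG_{sr}$ has no cycles at all (each component is a sink by Remark~\ref{rem:multipaths_in_sr}), so $\Phi(m)$ is a multipath. Conversely, given a multipath $N$ in $\tG_{sr}$, send it to the set of $\hasse(\tG)$-edges $\{(e,{\tt t}(e)) : ((v,e),w)\in N\}$; by Remark~\ref{rem:multipaths_in_sr} the targets are distinct, so these edges are pairwise disjoint in $\hasse(\tG)$, giving an element of $\MM^o(\tG)$. These two assignments are visibly mutually inverse, and both obviously preserve and reflect inclusions, so $\Phi$ is an isomorphism of posets.

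Finally I would record that $\Phi$ is the isomorphism claimed in the statement, i.e. that it identifies $F(\MM^o(\tG))$, the face poset of the oriented discrete Morse matching complex, with $P(\tG_{sr})$. There is a small bookkeeping point worth making explicit: that $\MM^o(\tG)=\mat_0(\orient)$ consists of those oriented matchings whose induced pseudoforest (Proposition~\ref{prop:simplessi_psuedoforeste}) has no pseudotree components that are not trees — equivalently, whose induced subgraph is a genuine forest — and that this is automatically the case for any $\Phi$-image since $\tG_{sr}$ is acyclic; so no simplices are lost and the bijection is onto $P(\tG_{sr})$ rather than onto some subposet. The main (and really only) obstacle here is not difficulty but precision: carefully matching up the three parallel descriptions — matchings on the "target" subgraph of $\hasse(\tG)$, subsets of $E(\tG)$ with the at-most-one-target condition, and multipaths in $\tG_{sr}$ via Remark~\ref{rem:multipaths_in_sr} — and making sure the disjointness/indegree conditions correspond exactly, with no off-by-one in which edges of $\hasse(\tG)$ are being used. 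Once the dictionary is set up, the verification that $\Phi$ is an order isomorphism is immediate.
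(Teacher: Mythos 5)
Your main argument is correct and is essentially the paper's own proof: the same edge bijection $e\mapsto(({\tt s}(e),e),{\tt t}(e))$, the same identification of both oriented matchings on $\tG$ and multipaths in $\tG_{sr}$ with the subsets of $E(\tG)$ in which each vertex is the target of at most one edge (via Remark~\ref{rem:multipaths_in_sr}), and the same observation that this bijection preserves and reflects inclusions.

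The one thing you should fix is the closing ``bookkeeping'' paragraph, which is wrong as stated. The complex $\MM^o(\tG)$ in the statement is the full oriented matching complex $\mat(\tG)$ of Definition~\ref{def:omatch} (the macros denote the same object), \emph{not} the oriented discrete Morse subcomplex $\mathcal{M}^o(\orient)=\mat_0(\orient)$; by Proposition~\ref{prop:simplessi_psuedoforeste} these genuinely differ, since an oriented matching may induce pseudotree components containing a coherently oriented cycle. Your justification --- that the forest condition ``is automatically the case for any $\Phi$-image since $\tG_{sr}$ is acyclic'' --- is a non sequitur: the discrete Morse condition concerns cycles in the subgraph of $\tG$ induced by the matching (equivalently, directed cycles created in $\hasse(\tG)$), not cycles in $\tG_{sr}$. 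Indeed, under your reading the proposition would be false: for $\tG$ a coherently oriented $n$-cycle, the set of all $n$ edges is an oriented matching (all targets are distinct), so $\mat(\tG)$ is the full $(n-1)$-simplex and $P(\tG_{sr})$ has a corresponding top element, but this matching is not a discrete Morse matching, so $F(\mat_0(\tG))$ lacks the top face and is not isomorphic to $P(\tG_{sr})$. Fortunately, the body of your proof treats $\sigma$ as an arbitrary oriented matching, i.e.\ any edge set satisfying the at-most-one-target condition, and never uses the forest restriction; so it proves the statement as intended, and you should simply delete the identification of $\MM^o(\tG)$ with $\mat_0(\orient)$.
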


\begin{proof}
The map
\begin{align*}
\Phi\colon \,& E(\tG) \longrightarrow E(\tG_{sr})\\ &e \longmapsto (({\tt s}(e),e),{\tt t}(e)),
\end{align*}
is a bijection, with inverse $((v,e),w) \mapsto e$.
Therefore, we have a bijection between the (coherently oriented) edges in the barycentric  subdivision of $\tG$ and the edges of $\tG_{sr}$. 

We claim that this bijection sends an oriented matching in $\tG$ to the set of edges in a multipath in $\tG_{sr}$. An oriented matching is a collection of coherently oriented edges in the barycentric subdivision of $\tG$ not sharing the same target. These can be seen as a collection of edges in $\tG$ not sharing the same target. Since ${\tt t}(e)\neq {\tt t}(e')$ if and only if ${\tt t}(\Phi(e))\neq {\tt t}(\Phi(e'))$, our claim is a consequence of Remark~\ref{rem:multipaths_in_sr}.
A similar reasoning, replacing $\Phi$ with its inverse, can be used to prove that all oriented matchings arise in this way. 

Finally, the bijection between $\MM^o(\tG)$ and $P(\tG_{sr})$ just described clearly respects inclusions, and the statement follows.
\end{proof}

\section{Oriented matchings and monotone cohomology}
The aim of this section is to study the homotopy type of the oriented matching complexes. We show that the oriented matching complexes are homotopy equivalent to wedges of spheres; futher, the sphere's dimensions are related to simple combinatorial information of graphs. By providing an isomorphism with multipath cohomology, we show that computations can be carried on also in the case of non-constant functor coefficients.

\subsection{Homotopy type of oriented matchings}\label{sec:homotopy}

Recall that if $X$ is a finite simplicial complex, then its \emph{$n$-point suspension}\footnote{This operation is also sometimes referred to as \emph{iterated suspension}.}   $\Sigma_n(X)$ is the join $\{n \text{ points}\} * X$.
So, for example, $\Sigma_0(X) = X$, $\Sigma_1(X) = \text{Cone}(X)$ and $\Sigma_2(X)$ is the usual suspension of $X$. 
If $\alpha = (n_1, n_2, \ldots,n_k) $ is a sequence of non-negative integers, we denote by $\Sigma_\alpha (X)$ the $\alpha$-suspension of $X$, \emph{i.e.}~the composition $\Sigma_{n_1}\circ\dots\circ\Sigma_{n_k}$ applied to $X$. We also set $\Sigma(\alpha) \coloneqq \Sigma_\alpha (\emptyset)$.

Note that $$\Sigma(\underbrace{1, \ldots,1}_n) = \Delta^{n-1}$$ and
\begin{equation}\label{eq:zeros}
\Sigma_{(\ldots, n_i, 0, n_{i+2}, \ldots)} (X) = \Sigma_{(\ldots, n_i,  n_{i+2}, \ldots)} (X) \ .
\end{equation}
As a consequence of Equation~\eqref{eq:zeros}, from now on we assume that all strings $\alpha$ do not contain any zero entry.
Since the join of complexes is commutative and associative \cite[Lemma~62.4]{munkres}, we see that $\Sigma_{(a,b)}(X) \simeq \Sigma_{(b,a)}(X)$. 
In particular, if $a_1, \ldots, a_k$ are positive integers, then $\Sigma(1, a_1, ... ,a_k)$ is always a cone.

\begin{lem}\label{lem:joinspheres}
Let $m$ be an integer greater than $1$. The $m$-point suspension $\Sigma_{m}(S)$ of a wedge of spheres~$S = \bS^{n_1} \vee \dots \vee \bS^{n_k}$ is homotopy equivalent to 
\[
\bigvee_{i=1}^{m-1} \left( \bS^{n_1+1} \vee \dots \vee \bS^{n_k+1} \right) \ .
\]
\end{lem}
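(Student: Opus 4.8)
The plan is to reduce the $m$-point suspension to iterated ordinary (double-point) suspensions and then apply the well-known formula for suspensions of wedges of spheres. First I would recall that, as noted in the text just before the lemma, the join is commutative and associative, so $\Sigma_m(S) = \{m\text{ points}\} * S$ can be rewritten by decomposing the discrete set of $m$ points. The key observation is that a discrete set of $m$ points is homotopy equivalent to a wedge of $m-1$ copies of $\bS^0$; more precisely, up to homotopy we can build up the $m$-point set one point at a time, and each additional point past the first contributes an extra suspension. Concretely, $\{m\text{ points}\} * S$ can be analysed via the cofibration/join identity $\{k+1\text{ points}\} * S \simeq \Sigma(\{k\text{ points}\} * S) \vee \Sigma S$ for $k \geq 1$, or alternatively by the clean statement that $\Sigma_m(X) \simeq \bigvee_{i=1}^{m-1} \Sigma_2(X)$ for any space $X$ and $m \geq 1$ (this is a standard fact; it can be proved by induction on $m$, the base case $m=2$ being tautological and the inductive step using that the join with a point that is split off contributes one more suspended copy). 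I would either cite this or give the short inductive argument.

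Next, granting $\Sigma_m(S) \simeq \bigvee_{i=1}^{m-1} \Sigma_2(S)$, I would invoke the elementary computation of the ordinary suspension of a wedge of spheres: $\Sigma_2(\bS^{n_1} \vee \cdots \vee \bS^{n_k}) \simeq \bS^{n_1+1} \vee \cdots \vee \bS^{n_k+1}$. This follows because suspension commutes with wedges (the suspension of a wedge is the wedge of suspensions, since $\Sigma$ is a left adjoint / preserves pushouts and the basepoint identifications match up) and because $\Sigma \bS^n \cong \bS^{n+1}$. Substituting this into the previous displayed equivalence gives exactly
\[
\Sigma_m(S) \simeq \bigvee_{i=1}^{m-1}\left( \bS^{n_1+1} \vee \cdots \vee \bS^{n_k+1}\right),
\]
which is the claim.

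The only genuinely substantive point — the rest being formal manipulation of joins, wedges, and suspensions — is the identity $\Sigma_m(X) \simeq \bigvee_{i=1}^{m-1}\Sigma_2(X)$, i.e.\ understanding precisely how joining with $m$ points (as opposed to two) inflates the homotopy type. The cleanest route is induction on $m$: write $\{m\text{ points}\} = \{m-1\text{ points}\} \sqcup \{\text{pt}\}$, use associativity of the join to get $\Sigma_m(X) = (\{m-1\text{ points}\} \sqcup \{\text{pt}\}) * X$, and then apply the general fact that for a join $(A \sqcup B) * X$ with $A, B$ nonempty one has $(A\sqcup B)*X \simeq (A*X) \vee \Sigma X \vee (B * X)$ when $A*X$ and $B*X$ are already based spaces in a compatible way; iterating collapses everything to a wedge of $m-1$ copies of $\Sigma X * (\text{pt}) \simeq \Sigma_2 X$ — wait, more carefully one uses that $\{\text{two points}\} * X = \Sigma X$ and builds up. I expect the bookkeeping of basepoints and the precise form of the wedge-decomposition of a join over a disjoint union to be the main obstacle; working simplicially (all complexes here are finite) sidesteps the point-set subtleties, since the join of finite simplicial complexes is a finite simplicial complex and the relevant homotopy equivalences can be made explicit, or one can simply quote the standard reference for the homotopy type of joins of wedges of spheres.
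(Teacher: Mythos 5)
Your proposal is correct and is essentially the same formal argument as the paper's: both proofs reduce the lemma to the fact that the join commutes with wedges up to homotopy, combined with the homotopy type of a join involving a sphere. The only difference is the order of the two steps -- the paper first distributes the $m$-point join across $\bS^{n_1}\vee\dots\vee\bS^{n_k}$ and then uses $\bS^{n}*\{m\text{ points}\}\simeq\bigvee^{m-1}\bS^{n+1}$, whereas you first split the $m$ points to get $\Sigma_m(S)\simeq\bigvee^{m-1}\Sigma_2(S)$ and then suspend the wedge -- so this is the same approach rather than a genuinely different route.
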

\begin{proof}
Recall that for a sphere $\bS^n$, the join with $m \ge 1$ points is a wedge of $m-1$ spheres:
\[
\bS^n * \{m\text{ points}\} \simeq \bigvee_{i=1}^{m-1}\bS^{n+1} \ .
\]
Since the join and the wedge commute up to homotopy, the assertion follows.
\end{proof}

We obtain the following observation as a straightforward consequence of Lemma~\ref{lem:joinspheres}:
\begin{rem}\label{rem:easyfromlemma}
Let $\alpha = (n_1, n_2, \ldots,n_k)$ be a sequence of positive integers, and assume $k>0$. Then, if $n_i = 1$ for some $i$, $\Sigma(\alpha)$ is contractible. Otherwise, we have a homotopy equivalence: 
$$\Sigma(\alpha) \simeq \bigvee^{q(\alpha)} \bS^{\vert \alpha \vert -1} \ , $$ where $q(\alpha) = \prod_{i} (n_i -1)$, and $|\alpha| = \sum_{i=1}^k n_i$.
As a consequence, $\Sigma(\alpha)$ is simply connected if and only if $k \ge 3$. 
\end{rem}

 This next result implies that the whole homotopy type of the complexes $\mat(\tG)$ is completely determined by the collection of indegrees of the vertices in $\tG$.

\begin{prop}\label{prop:homotopy of or mat}
Let $\tG$ be an oriented graph on $n$ vertices. Then, the oriented matching complex of $\tG$ is either contractible or a wedge of spheres. More precisely,  if there is $v$ such that $\mathrm{indeg}(v) =1$, then $\mat(\tG)$ is contractible. Otherwise, 
$$\mat(\tG) \simeq \bigvee^{q} \bS^{N-1} $$  where $$ q \coloneqq \prod_{\substack{ v \in V(\tG) \\ \mathrm{indeg}(v)>1}} (\mathrm{indeg}(v) -1),$$
and $N$ is the number of vertices whose indegree is positive.
\end{prop}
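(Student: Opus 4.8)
The plan is to recognise $\mat(\tG)$, up to simplicial isomorphism, as an iterated join of discrete complexes, and then feed this into the join computations of Subsection~\ref{sec:homotopy}.

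First I would pin down the simplicial structure of $\mat(\tG)$. For a vertex $v\in V(\tG)$, write $E_v$ for the set of edges of $\tG$ whose target is $v$, so that $|E_v|=\mathrm{indeg}(v)$, and let $\langle E_v\rangle$ be the $0$-dimensional simplicial complex on the vertex set $E_v$ (with the convention that $\langle\emptyset\rangle$ is the empty complex). Unwinding Definition~\ref{def:omatch} -- or, equivalently, combining Proposition~\ref{prop:simplessi_psuedoforeste} with Lemma~\ref{lem:pseudoindegree} -- a simplex of $\mat(\tG)$ is exactly a spanning subgraph of $\tG$ all of whose vertices have indegree at most $1$; that is, a set $S\subseteq E(\tG)$ with $|S\cap E_v|\le 1$ for every $v$. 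This is precisely the condition defining a simplex of the join of the complexes $\langle E_v\rangle$, so
\[
\mat(\tG)\;\cong\;\langle E_{v_1}\rangle\ast\langle E_{v_2}\rangle\ast\dots\ast\langle E_{v_n}\rangle ,
\]
and, since the factors attached to vertices of indegree $0$ are empty complexes and the empty complex is a unit for the join (\emph{cf.}~$\Sigma_0(X)=X$), only the $N$ factors with $\mathrm{indeg}(v)>0$ contribute.

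Then I would run the homotopy computation. As $\langle E_v\rangle$ is $\mathrm{indeg}(v)$ disjoint points, the displayed isomorphism identifies $\mat(\tG)$ with the iterated join $\Sigma(\alpha)$ of Subsection~\ref{sec:homotopy}, where $\alpha$ is the tuple $(\mathrm{indeg}(v))_{v:\,\mathrm{indeg}(v)>0}$ listed in any order, the order being immaterial since the join is associative and commutative (\emph{cf.}~\cite[Lemma~62.4]{munkres}). If some entry of $\alpha$ equals $1$ -- that is, some vertex of $\tG$ has indegree $1$ -- then that factor is a single point and $\mat(\tG)$ is a cone, hence contractible. Otherwise every $\mathrm{indeg}(v)\ge 2$, so $\langle E_v\rangle\simeq\bigvee^{\mathrm{indeg}(v)-1}\bS^0$; joining the $N$ factors one at a time and applying Lemma~\ref{lem:joinspheres} at each step -- each application raising the dimension of every sphere by $1$ and multiplying the number of wedge summands by $\mathrm{indeg}(v)-1$ -- yields $\mat(\tG)\simeq\bigvee^{q}\bS^{N-1}$ with $q=\prod_{v:\,\mathrm{indeg}(v)>1}(\mathrm{indeg}(v)-1)$ (\emph{cf.}~Remark~\ref{rem:easyfromlemma}).

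The only genuine content is the identification in the first step, and I do not expect a real obstacle beyond careful bookkeeping. The two points to watch are: discarding the indegree-$0$ factors from the join, so that the sphere dimension comes out as $N-1$, with $N$ the number of positive-indegree vertices, rather than $|V(\tG)|-1$; and noticing that a single indegree-$1$ vertex already forces the join to be a cone, which is exactly what produces the contractible case.
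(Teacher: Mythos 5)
Your argument is correct, but it reaches the key decomposition by a different route than the paper. The paper's proof invokes Proposition~\ref{prop:or_mat=multi_sr} to identify $\mat(\tG)$ with the multipath complex $X(\tG_{sr})$ of the source resolution, observes that $\tG_{sr}$ is a disjoint union of sinks (one for each vertex of positive indegree), and then uses two facts about multipath complexes imported from \cite{secondo} -- that the multipath complex of a disjoint union is the join of the multipath complexes, and that the multipath complex of a sink is a discrete set of points -- before concluding with Remark~\ref{rem:easyfromlemma}. You instead bypass $\tG_{sr}$ and the multipath machinery entirely: unwinding Definition~\ref{def:omatch} (equivalently, Proposition~\ref{prop:simplessi_psuedoforeste} together with Lemma~\ref{lem:pseudoindegree}), you observe that a simplex of $\mat(\tG)$ is exactly an edge set meeting each ``incoming star'' $E_v$ at most once, which exhibits $\mat(\tG)$ directly as the join $\ast_v\langle E_v\rangle$ of discrete complexes, the indegree-$0$ factors being join-units. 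The endgame (Lemma~\ref{lem:joinspheres}, Remark~\ref{rem:easyfromlemma}) is then identical, and your bookkeeping of the contractible case and of the dimension $N-1$ (correctly counting only positive-indegree vertices, i.e.\ the number of join factors rather than the total indegree) is right. What your route buys is self-containedness and a proof that does not depend on the source-resolution functor or on external results about multipath complexes; what the paper's route buys is economy given that Proposition~\ref{prop:or_mat=multi_sr} is already established, together with a structural link to the multipath framework that is exploited again in Theorem~\ref{thm:iso}. In effect, your first step is an in-line proof of exactly what the paper extracts from Proposition~\ref{prop:or_mat=multi_sr} plus the disjoint-union and sink computations, so the two arguments converge on the same application of Remark~\ref{rem:easyfromlemma} to the tuple of indegrees.
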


\begin{proof}
By Proposition~\ref{prop:or_mat=multi_sr} we have the following isomorphism of simplicial complexes 
\[\mat(\tG) \cong X(\tG_{sr}) \ .\]
Observe that $\tG_{sr}$ is the disjoint union of graphs $\tH_v$,  for $v\in V(\tG)$ with $\mathrm{indeg}(v)\geq 1$, each one isomorphic to a sink with $\mathrm{indeg}(v) + 1$ vertices. 
The multipath complex of a disjoint union is the join of the corresponding multipath complexes, and the multipath complex of a sink  with $m$ vertices is the disjoint union of $m$ points -- see \cite[Subsection~6.4]{secondo}. The statement now follows from Remark~\ref{rem:easyfromlemma}.
\end{proof}

A simplicial complex $X$ is \emph{shellable} if there exists an ordering $\Delta_1$, ..., $\Delta_k$ of the maximal simplices in $X$ such that the complex
\[ Y_h = \Delta_h \cap  \bigcup_{i=1}^{h-1} \Delta_i\]
is pure (\emph{i.e.}~all its maximal simplices have the same dimension) of dimension $\mathrm{dim}(\Delta_h) - 1$, for each~$h$.
There is also a related notion of \emph{strong shellability};  we will not recall it here and refer the reader to \cite[Definition~2.2]{MR4015988}. For a simplicial complex, strong shellability implies shellability. 

It is well-known that  shellable (and strongly shellable) complexes are homotopy equivalent to wedges of spheres, one for each maximal simplex (see, for instance, \cite[Theorem~12.3]{Kozlov}).
Therefore, it is natural to ask whether non-contractible oriented matching complexes are shellable. We affermatively answer this question;

\begin{cor}
Oriented matching complexes are either strongly shellable or contractible. \\ In particular, all non-contractible matching complexes are Cohen-Macaulay.
\end{cor}
\begin{proof}
The join of strongly shellable complexes is strongly shellable by \cite[Proposition~2.16]{MR4015988} (see also \cite[Remark~10.22]{MR1401765} for ``non-strong'' shellability). Wedges of $0$-dimensional spheres, that is finite unions of more than one points, are strongly shellable.
The oriented matching complex can be obtained as the iterated join of wedges of $0$-dimensional spheres if and only if there are no vertices with indegree $1$  (\emph{cf.}~the proof of Proposition~\ref{prop:homotopy of or mat}). Since, by Proposition~\ref{prop:homotopy of or mat}, the presence of vertices of indegree $1$ is equivalent to the oriented matching complex being contractible, we obtain the first part of the statement.

Oriented matching complexes are pure; the dimension of each maximal simplex is the number of vertices with positive indegree. It was observed in \cite{MR4015988} that pure strongly shellable complexes are Cohen-Macaulay, giving the second part of the statement.
\end{proof}

Following this last proposition, it is easy to give a complete characterisation of the graphs whose oriented matching complex is homotopically equivalent to a sphere.
\begin{cor}
The oriented matching complex $\mat(\tG)$ of an oriented graph $\tG$ is homotopically equivalent to a sphere $\bS^n$ if and only if all vertices have indegree $0$ or $2$.
\end{cor}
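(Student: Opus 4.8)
The plan is to reduce everything to the homotopy classification already established in Proposition~\ref{prop:homotopy of or mat}. By that proposition, either $\mat(\tG)$ is contractible (exactly when some vertex has indegree $1$), or $\mat(\tG) \simeq \bigvee^{q} \bS^{N-1}$, where $N$ is the number of vertices of positive indegree and $q = \prod_{\mathrm{indeg}(v)>1}(\mathrm{indeg}(v)-1)$. A space homotopy equivalent to a single sphere $\bS^n$ is in particular a connected, non-contractible wedge of spheres with exactly one sphere; so the task is to determine for which $\tG$ the wedge $\bigvee^{q}\bS^{N-1}$ is a single sphere, i.e.\ when $q=1$, while simultaneously ruling out the contractible case.

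First I would observe that $q=1$ forces every vertex $v$ with $\mathrm{indeg}(v)>1$ to contribute a factor $\mathrm{indeg}(v)-1 = 1$, i.e.\ $\mathrm{indeg}(v)=2$; conversely, if every vertex has indegree $0$ or $2$, then the product over vertices of indegree $>1$ is a (possibly empty) product of $1$'s, hence $q=1$. Next I would check the contractibility condition does not interfere: if all indegrees are $0$ or $2$, then no vertex has indegree $1$, so Proposition~\ref{prop:homotopy of or mat} puts us in the wedge-of-spheres case, and $\mat(\tG) \simeq \bigvee^{1}\bS^{N-1} = \bS^{N-1}$. This handles the ``if'' direction, with the mild caveat that one must have $N\geq 1$ so that $\bS^{N-1}$ is a genuine sphere; if $\tG$ has at least one edge then some vertex is a target and $N\geq 1$, and if $\tG$ has no edges the statement is vacuous or one interprets $\bS^{-1}=\emptyset$ — I would add a sentence fixing this convention, or simply assume $E(\tG)\neq\emptyset$.

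For the ``only if'' direction, suppose $\mat(\tG) \simeq \bS^n$ for some $n$. Then $\mat(\tG)$ is not contractible, so by Proposition~\ref{prop:homotopy of or mat} no vertex has indegree $1$ and $\mat(\tG)\simeq\bigvee^q \bS^{N-1}$ with $q = \prod_{\mathrm{indeg}(v)>1}(\mathrm{indeg}(v)-1)$. A wedge of $q$ copies of $\bS^{N-1}$ is homotopy equivalent to a single sphere only if $q=1$ (compare reduced homology: $\widetilde{H}_{N-1}(\bigvee^q\bS^{N-1})\cong\bZ^q$, which is $\bZ$ iff $q=1$; and then necessarily $n=N-1$). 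Since $q$ is a product of positive integers each of the form $\mathrm{indeg}(v)-1$ over vertices with $\mathrm{indeg}(v)\geq 2$, the product equals $1$ iff each factor is $1$, i.e.\ iff every such vertex has $\mathrm{indeg}(v)=2$. Combining this with the already-established fact that no vertex has indegree $1$, we conclude every vertex has indegree $0$ or $2$, as desired.

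There is no real obstacle here — the corollary is an essentially immediate arithmetic unpacking of Proposition~\ref{prop:homotopy of or mat} together with the elementary fact that $\widetilde{H}_*(\bigvee^q\bS^m)$ detects $q$. The only points requiring a modicum of care are: (i) making sure the contractible case is excluded, which is automatic once one notes ``indegree $0$ or $2$'' precludes ``indegree $1$''; and (ii) the degenerate boundary cases ($\tG$ edgeless, or $N=1$ giving $\bS^0$), which are handled by a one-line convention. I would write the proof in three or four sentences invoking Proposition~\ref{prop:homotopy of or mat} and Remark~\ref{rem:easyfromlemma}.
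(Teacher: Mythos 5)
Your argument is correct and is essentially the paper's own proof: the corollary is deduced by directly applying Proposition~\ref{prop:homotopy of or mat} and noting that the wedge is a single sphere exactly when $q=1$, i.e.\ when every vertex of positive indegree has indegree $2$, with the indegree-$1$ (contractible) case automatically excluded. Your extra attention to the degenerate cases ($N=0$ or an edgeless graph) is a reasonable refinement, but it does not change the approach.
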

\begin{proof}
Apply Proposition~\ref{prop:homotopy of or mat}. The number of vertices of indegree $2$ gives the dimension of the sphere.
\end{proof}

\subsection{Oriented matching homology with non-constant coefficients}

We proved in Proposition~\ref{prop:or_mat=multi_sr} that the isomorphism between the face poset of $\MM^o(\tG)$ and the path poset of $\tG_{sr}$ induces an isomorphism of the associated monotone cohomologies with constant coefficients. It is however possible to show that this isomorphism cannot be straightforwardly extended to an isomorphism between the respective monotone cohomologies groups with arbitrary functor-valued coefficients.

One easy example is given by the graph $\tG$ shown in Figure~\ref{fig:sourceres}(A); let $A$ be a finitely generated $\bF$-algebra of dimension~$\alpha$. Consider $C_{\mu}^*(\tG;A)$, the multipath cochain complex of  $\tG$ with coefficients in the functor $\cF_A\colon\mathbf{P}(\tG)\to \mathbf{Vect}_{\bF}$  and denote by $C_o^*(\tG;A)$ the monotone cochain complex associated to  oriented matchings (\emph{cf.}~Definition~\ref{def:monhom}). 
The chain complex $C_o^*(\tG;A)$ admits the  following description:
\[
\begin{tikzcd}
\cdots \arrow{r} & 0 \arrow{r} &A^{\otimes 6} \arrow{r} & A^{\otimes 5}\oplus A^{\otimes 5} \oplus A^{\otimes 5}\arrow{r}& A^{\otimes 4}\oplus A^{\otimes 4}  \arrow{r} & 0 \arrow{r} & \cdots
\end{tikzcd}
\]
where $A^{\otimes 6} $ sits in  degree $0$. The Euler characteristic of $C_o^*(\tG;A)$ is easily computed to be $\alpha^4 ( \alpha  -2)(\alpha - 1 ) $. 
On the other hand, the cochain complex associated to~$\tG_{sr}$, using the poset in Figure~\ref{fig:sourceres}(C) is
\[
\begin{tikzcd}
\cdots \arrow{r} & 0 \arrow{r} &A^{\otimes 5} \arrow{r} & A^{\otimes 4}\oplus A^{\otimes 4} \oplus A^{\otimes 4}\arrow{r}& A^{\otimes 3}\oplus A^{\otimes 3}  \arrow{r} & 0 \arrow{r} & \cdots
\end{tikzcd}
\]
where $A^{\otimes 5} $ sits in cohomological degree $0$. This time, the Euler characteristic of $C_o^*(\tG;A)$ is given by $\alpha^3 ( \alpha  -2)(\alpha - 1 ) $. Therefore, these complexes must have distinct homologies for any choice of $A$ of dimension $\alpha\geq 3$.

Despite being distinct, the above example suggests 
that the multipath cohomology groups of~$\tG_{sr}$ and the oriented matching cohomology groups of~$\tG$ might still be closely related. We will make use of the following general observation.

\begin{rem}\label{rem:samemonotneposets}
Let $\tG$ and $\tG'$ two graphs, and let $P_\mathscr{M}\subseteq SS(\tG)$ and $P_{\mathscr{M}'}\subseteq SS(\tG')$ be posets associated to the monotone properties $\mathscr{M}$ and $\mathscr{M}'$, respectively.
If there exists an isomorphism of posets
\[ \varphi: P_\mathscr{M} \longrightarrow P_{\mathscr{M}'} \]
such that  there is a graph isomorphism $\varphi(\tH) \cong \tH$ for each $\tH \in P_{\mathscr{M}}$, then we have an isomorphism
\[ (C^*_{\mathcal{F}_{A}}(P_{\mathscr{M}}(\tG)), d^*) \cong (C^*_{\mathcal{F}_{A}}(P_{\mathscr{M}'}(\tG')), d^*) \]
of cochain complexes.
\end{rem}

\begin{thm}\label{thm:iso}
Let $\tG$ be an oriented graph. Then, there is an isomorphism of cochain complexes
\[
C_{o}^*(\tG;A)\cong C_{\mu}^*(\tG_{sr};A)\otimes A^{\otimes s}
\]
where $s$ is the number of  vertices of indegree $0$ in $\tG$.
\end{thm}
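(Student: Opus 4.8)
The plan is to build the isomorphism of cochain complexes directly on the level of the underlying posets with functor coefficients, using Remark~\ref{rem:samemonotneposets} as the organizing principle, but accounting carefully for the discrepancy in the number of connected components between oriented matchings on $\tG$ and multipaths on $\tG_{sr}$. First I would recall the bijection $\Phi\colon E(\tG)\to E(\tG_{sr})$ from the proof of Proposition~\ref{prop:or_mat=multi_sr}, which already gives an isomorphism of posets $F(\MM^o(\tG))\cong P(\tG_{sr})$. The key point is that this isomorphism is \emph{not} component-preserving: if $\tH\in\MM^o(\tG)$ is an oriented matching, then the corresponding subgraph $\Phi(\tH)\subseteq\tG_{sr}$ has the same edges but typically fewer connected components, because all the isolated vertices of $\tG_{sr}$ that are of the form $(v,e)$ with $e$ \emph{not} used in $\tH$ do not appear as vertices in $\tG$ -- in $\tG$ those split sources are merged into a single vertex $v$. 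More precisely, I would show that the set of vertices of $\tG$ with indegree $0$ that do not meet any edge of the matching contributes $s$ extra isolated-vertex components to the subgraph of $\tG$ relative to the corresponding subgraph of $\tG_{sr}$; since $\cF_A$ assigns a tensor factor of $A$ to each connected component, and isolated vertices are always connected components regardless of how many edges are in the subgraph, these $s$ factors are a constant tensor factor $A^{\otimes s}$ that splits off.

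The second step is to make this precise by choosing orderings compatible with the fixed vertex orders on $\tG$ and $\tG_{sr}$, so that the identification of components in Equation~\eqref{eq:identification_components} is respected. Concretely, for a spanning subgraph $\tH$ of $\tG$ satisfying the oriented matching property, its connected components split into: the $s$ isolated vertices coming from indegree-$0$ vertices of $\tG$, plus the components that contain at least one edge or are isolated vertices of positive indegree. The latter are exactly in component-preserving bijection with the connected components of $\Phi(\tH)$ regarded as a spanning subgraph of $\tG_{sr}$ (here I would use Remark~\ref{rem:multipaths_in_sr}: each component of $\tG_{sr}$ is a sink, so a multipath component is a single edge or a single vertex, matching the single-edge-or-vertex components of an oriented matching). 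Hence $\cF_A(\tH)\cong \cF_A(\Phi(\tH))\otimes A^{\otimes s}$ naturally. I would then check that under $\Phi$ the covering relations and the structure maps $\mu$ and $I$ of $\cF_A$ correspond: adding an edge $e$ to an oriented matching merges a component (an isolated source or a previously single edge whose target becomes shared -- wait, no: in an oriented matching, targets are distinct, so adding an edge merges the source vertex's component into the target, decreasing component count by one) exactly when adding $\Phi(e)$ to the multipath of $\tG_{sr}$ merges two components; and the multiplication map on the merged tensor factor is literally the same. The $A^{\otimes s}$ factor is untouched by every differential, so the whole complex factors as claimed.

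The last step is bookkeeping with signs: the rank functions agree (both count edges, so $\ell_{\MM^o(\tG)}(\tH)=\vert\tH\vert=\vert\Phi(\tH)\vert=\ell_{P(\tG_{sr})}(\Phi(\tH))$), and since both $P_{\MM^o}(\tG)$ and $P(\tG_{sr})$ are CW-posets, the choice of sign assignment is immaterial up to isomorphism of cochain complexes -- I would invoke the remark after Theorem~\ref{teo: general cohom} (\emph{cf.}~\cite[Corollary~3.18]{primo}) and push the sign assignment on $\tG_{sr}$ back along $\Phi$. Assembling: $C_o^n(\tG;A)=\bigoplus_{\ell(\tH)=n}\cF_A(\tH)\cong\bigoplus_{\ell(\Phi(\tH))=n}\cF_A(\Phi(\tH))\otimes A^{\otimes s}=C_\mu^n(\tG_{sr};A)\otimes A^{\otimes s}$, compatibly with differentials.

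The main obstacle I anticipate is the careful verification that the $s$ split-off tensor factors are genuinely \emph{constant} and uniformly indexed across all of $\MM^o(\tG)$ -- i.e.\ that the $s$ indegree-$0$ vertices are isolated components in \emph{every} oriented matching subgraph, and that the chosen orderings place these $s$ factors in consistent tensor slots so that the isomorphism $\cF_A(\tH)\cong\cF_A(\Phi(\tH))\otimes A^{\otimes s}$ is natural in $\tH$ (commutes with all the $\mu$'s and $I$'s). An indegree-$0$ vertex of $\tG$ is never the target of an edge, hence never the target of an oriented matching edge, hence never incident to any edge of an oriented matching subgraph -- so it is always an isolated component; that settles the first concern. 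For the ordering, one uses the fact (cited in the construction of $\cF_A$) that reordering of tensor factors is immaterial up to canonical isomorphism because $A$ is commutative, so one may always normalize the $s$ constant factors to the last $s$ slots. Everything else is routine.
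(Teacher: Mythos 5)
There is a genuine gap, and it lies in where you evaluate the functor $\cF_A$. Throughout your proposal you treat an oriented matching as a spanning subgraph of $\tG$ itself (i.e.\ as a free-flow pseudoforest) and count its connected components in $\tG$; but the complex $C_{o}^*(\tG;A)$ in the statement is the monotone complex of matchings viewed as spanning subgraphs of $\hasse(\tG)$, with components counted there (\emph{cf.}\ Definition~\ref{def:omatch}); the example preceding the theorem makes this explicit: for the triangle of Figure~\ref{fig:sourceres}(A) the complex begins with $A^{\otimes 6}$ in degree $0$, not $A^{\otimes 3}$. In your picture several of your key claims fail. The components of an oriented matching, read in $\tG$, are free-flow pseudotrees, not ``single edge or vertex''; the component-count discrepancy with the multipath $\Phi(\tH)\subseteq\tG_{sr}$ is not the constant $s$ (for the triangle and the empty matching it is $3$ components in $\tG$ versus $5$ in $\tG_{sr}$, and the direction of your ``fewer components'' claim is also reversed); and adding an edge does not always merge two components: when it closes the cycle of a pseudotree the structure map on the matching side is the identification $I$, while the corresponding edge addition in $\tG_{sr}$ is a multiplication $\mu$, so the differentials would not correspond under any fixed isomorphism. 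Indeed the graded dimensions already disagree in your reading: for the triangle your complex would have Euler characteristic $\alpha(\alpha-1)(\alpha-2)$, whereas $C_{\mu}^*(\tG_{sr};A)\otimes A^{\otimes s}$ has $\alpha^{4}(\alpha-1)(\alpha-2)$, so the statement you would be proving is false.

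The salvageable core of your idea --- splitting off a constant factor $A^{\otimes s}$ coming from the indegree-$0$ vertices, which, as you correctly argue, are isolated in every oriented matching --- is exactly what the paper does, but carried out inside $\hasse(\tG)$ rather than in $\tG$. Let $\tG'_{sr}$ be the subgraph of $\hasse(\tG)$ obtained by deleting the non-coherent edges $((v_i,v_j),v_i)$. In $\tG'_{sr}$ the components of a matching genuinely are single edges or isolated vertices, oriented matchings on $\tG$ are precisely the multipaths of $\tG'_{sr}$ (the orientation is alternating), and the identification is graph-isomorphism compatible and inclusion-preserving, so Remark~\ref{rem:samemonotneposets} applies verbatim to give $C_{o}^*(\tG;A)\cong C_{\mu}^*(\tG'_{sr};A)$. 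Since $\tG'_{sr}$ is isomorphic to $\tG_{sr}$ together with $s$ isolated vertices, and the multipath cochain complex of a disjoint union is the tensor product of the multipath cochain complexes, the factor $A^{\otimes s}$ splits off as claimed. Your remarks on signs (both posets are CW-posets, so the choice of sign assignment does not affect the isomorphism type of the complex) are fine and are not where the difficulty lies.
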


\begin{proof}
Consider the graph $\tG'_{sr}$ obtained from $\hasse(\tG)$ by removing all ``non-coherent'' edges, \emph{i.e.}~the edges of the form $((v_i,v_j), v_i) \in E(\hasse(\tG))$ -- \emph{cf.}~Definition~\ref{def:omatch}.
Then, $\tG_{sr}'$ is isomorphic to $\tG_{sr}$ together with the disjoint union of $s$ isolated vertices, $s$ being is the number of vertices of indegree $0$ in $\tG$. 

The graph $\tG'_{sr}$ is bipartite, and its orientation is alternating. 
This yields a bijection $\Phi$ between the set of multipaths in $\tG'_{sr}$ and the set of oriented matching in $\tG$. The bijection is such that a multipath is isomorphic, as a graph, to the corresponding oriented matching on $\tG$, thus providing an identification of the two. Moreover, inclusions of multipaths correspond to inclusions of matchings. Therefore, as an application of Remark~\ref{rem:samemonotneposets}, the associated cochain complexes are isomorphic. We conclude by noting that the multipath cochain complex of $\tG'_{sr}$ is isomorphic to the tensor product $ C_{\mu}^*(\tG_{sr};A)\otimes A^{\otimes s}$, since the multipath cochain complex of a disjoint union is the tensor product of the multipath cochain complexes (\emph{cf.}~\cite[Remark~3.2]{secondo}).
\end{proof}

Theorem~\ref{thm:iso} shows how to carry out computations with general functor coefficients; for sake of completeness, we provide an example:

\begin{example}
Consider the $3$-clique graph in Figure~\ref{fig:sourceres}(A), and set $A = \bF[X]/(X^2)$, for some field~$\bF$. By Theorem~\ref{thm:iso}, we know that $C_{o}^*(\tG;A)\cong C_{\mu}^*(\tG_{sr};A)\otimes A^{\otimes s}$, where $s=1$ and $\tG_{sr} = \tA_2 \sqcup \tL_1$ is the graph in  in Figure~\ref{fig:sourceres}(B).
As mentioned in the proof above, the multipath cochain complex of a disjoint union is the tensor product of the multipath cochain complexes. It follows that
\[C_{\mu}^*(\tG_{sr};A) \cong C_{\mu}^*(\tA_{2};A) \otimes C_{\mu}^*(\tL_{1};A)\ ,\]
where the tensor product is a graded tensor product.
Using \cite[Example~4.16]{primo} we can deduce that the multipath cohomology $\mathrm{H}_{\mu}^*(\tA_{2}; A) $ is isomorphic to $\bF^2$ in degree $0$, and $\bF^2$ in degree $1$. It is then easy to see that $\mathrm{H}_{\mu}^*(\tL_{1};A) $ is isomorphic to $\bF^2$ in degree $0$. 
Putting everything together, we see that the cohomology of the cochain complex $C_o^*(\tG;A)$ is
\[{\rm H}_o^i(\tG;A) \cong \begin{cases} \bF^{8} &  i = 0,1,\\ (0) &  \text{otherwise},\end{cases}\]
concluding the computation.
\end{example}

\section{Oriented homology and free-flow orientations}

In this final section, we introduce a special instance of the general constructions outlined in Section~\ref{sec:cohomologies}. This is a novel homology theory for oriented graphs; its construction is formally similar to the homology theories introduced in a different context in \cite{caputi2022categorifying}. 

Let $\tG$ be an unoriented graph. Recall that $O(\tG)$ denotes the set of all possible orientations on~$\tG$. For a fixed orientation  $\oo$, we can define a poset
$\mathcal{O}(\tG,\oo)\coloneqq (O(\tG), <_{\mathfrak{o}})$
where $<_{\oo}$ is the order relation associated to the covering relation $\prec_{\oo}$ defined as follows: given $\oo_1,\oo_2\in O(\tG)$, we say
$ \oo_1 \prec_{\oo} \oo_2$ if and only if the oriented graphs $\tG_{\oo_1}$ and $\tG_{\oo_2}$  differ only on a single edge $e$, and the orientation of~$e$ in $\tG_{\oo_1}$ is the same as in $\tG_{\oo}$. Clearly, we have a poset isomorphism between  $\mathcal{O}(\tG,\oo)$ and  the Boolean poset~$(\wp(|E(\tG)|),\subseteq)$.

Consider the functor $\cF\colon \mathcal{O}(\tG,\oo)\to \mathbf{grVect}_{\bF}$ to graded vector spaces over the field $\bF$, defined on each object $\oo'$ of (the category associated to) $ \mathcal{O}(\tG,\oo)$ as 
\begin{equation}
    \cF(\oo')\coloneqq \bF\langle \mat(\tG_{\oo'}) \rangle 
\end{equation}
where $\bF\langle \mat(\tG_{\oo'})\rangle$ is the $\bF$-vector space spanned by the simplices in the oriented matching complex $\mat(\tG_{\oo'})$, graded by the simplices' dimension.

For each covering relation $ \oo_1 \prec_{\oo} \oo_2$, we set
\begin{equation}
\label{eq: inclusions of oo}
    \cF(\oo_1 \prec_{\oo} \oo_2)\colon \cF(\oo_1)\to \cF(\oo_2)
\end{equation}
to be the map induced by the inclusion $\mat(\tG_{\oo_1}) \cap \mat(\tG_{\oo_2}) \hookrightarrow \mat(\tG_{\oo_2})$. Arguing as in Proposition~\ref{prop:functpreservessq}, this data describes a functor on the category associated to $ \mathcal{O}(\tG,\oo)$.

Now, for any sign assignment $\e$ on $\mathcal{O}(\tG,\oo)$, we consider the associated poset homology as the homology of the associated cochain complex $\mathrm{OC}(\tG,\oo)\coloneqq (C_\cF^*(\mathcal{O}(\tG,\oo)),d^*)$ -- \emph{cf.}~Theorem~\ref{teo: general cohom}. Note that this homology is bigraded; the first degree increases by one under the action of the differential. The second degree, called \emph{simplicial degree}, is just given by the simplices' dimensions. This latter degree is preserved under the differential, making the homology bigraded.

\begin{defn}
We define the oriented homology of $\tG$ with respect to $\oo$, $\mathrm{OH}(\tG_\oo)$, as the homology of the bigraded cochain complex $(\mathrm{OC}(\tG,\oo),d^*)$.
\end{defn}

As an example, consider the linear graph $\tL_2$ and the alternating graph $\tA_2$ -- \emph{cf.}~Figure~\ref{fig:nstep}. First, note that the oriented matching complex of $\tL_2$ is an interval, whereas the oriented matching complex of $\tA_2$ is given by the disjoint union of two points. The posets $\mathcal{O}(\tL_2,\oo)$ and $\mathcal{O}(\tA_2,\oo)$ are two Boolean posets of the same dimension.
The functor $\cF$ defined above, applied to these posets, yields the graded vector spaces shown in Figure~\ref{fig:two orient}, where the homological degrees go from $0$ to $2$ and the simplicial degrees are supported in degree $0$ and $1$. 

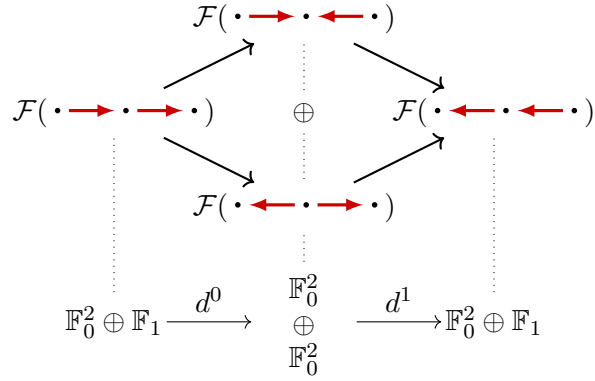
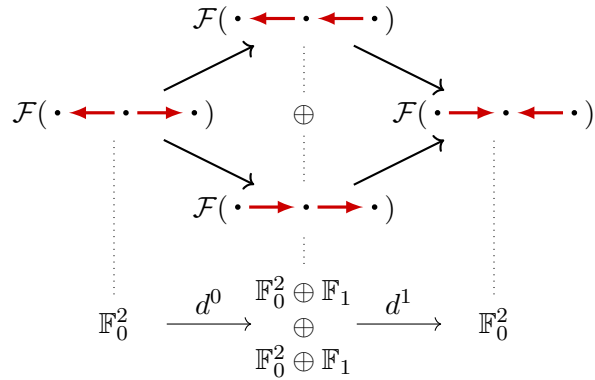
\begin{figure}
\centering
\begin{subfigure}{\textwidth}
\centering   
\begin{tikzpicture}[scale = .5]
\draw[dotted] (0,0) -- (0,-4.85) node[below] {$\bF_{0}^{2}\oplus \bF_1$};

\draw[dotted] (5,3) -- (5,-4) node[below] {$\begin{array}{c}\bF_{0}^{2} \\ \oplus\\ \bF_0^2\end{array}$};

\draw[dotted] (10,0) -- (10,-4.85) node[below] {$\bF_{0}^{2}\oplus \bF_1$};

\node[below] (uc0) at (0,-4.85) {\phantom{$C^0(X)$}};
\node[below] (uc1) at (5,-4.85) {\phantom{$C^0(X)$}};
\node[below] (uc2) at (10,-4.85) {\phantom{$C^0(X)$}};

\draw[->] (uc0) -- (uc1) node[midway, above] {$d^0$};
\draw[->] (uc1) -- (uc2) node[midway, above] {$d^1$};

\node[fill, white] at (5,0){$\oplus$};
\node  at (5,0){$\oplus$};

\node[fill, white] at (0,0) {\phantom{$\cF( \text{\raisebox{0em}{\begin{tikzpicture}[scale =.3, very thick]
    \foreach \a in {1,...,3}
         {
            \node[] (u\a) at ({\a*3},0) {};
            \draw[fill] (u\a) circle (.05);
         }
         \draw[bunired, latex-] (u1) -- (u2);
         \draw[bunired, latex-] (u2) -- (u3);
    \end{tikzpicture}}})$}};

\node[fill, white] at (5,2.5){\phantom{$\cF( \text{\raisebox{0em}{\begin{tikzpicture}[scale =.3, very thick]
    \foreach \a in {1,...,3}
         {
            \node[] (u\a) at ({\a*3},0) {};
            \draw[fill] (u\a) circle (.05);
         }
         \draw[bunired, latex-] (u1) -- (u2);
         \draw[bunired, latex-] (u2) -- (u3);
    \end{tikzpicture}}})$}};
\node[fill, white] at (5,-2.5){\phantom{$\cF( \text{\raisebox{0em}{\begin{tikzpicture}[scale =.3, very thick]
    \foreach \a in {1,...,3}
         {
            \node[] (u\a) at ({\a*3},0) {};
            \draw[fill] (u\a) circle (.05);
         }
         \draw[bunired, latex-] (u1) -- (u2);
         \draw[bunired, latex-] (u2) -- (u3);
    \end{tikzpicture}}})$}};
    
\node[fill, white] at (10,0) {\phantom{$\cF( \text{\raisebox{0em}{\begin{tikzpicture}[scale =.3, very thick]
    \foreach \a in {1,...,3}
         {
            \node[] (u\a) at ({\a*3},0) {};
            \draw[fill] (u\a) circle (.05);
         }
         \draw[bunired, latex-] (u1) -- (u2);
         \draw[bunired, latex-] (u2) -- (u3);
    \end{tikzpicture}}})$}};

\node (a) at (0,0) {$\cF(\text{\raisebox{0em}{\begin{tikzpicture}[scale =.3, very thick]
    \foreach \a in {1,...,3}
         {
            \node[] (u\a) at ({\a*3},3) {};
            \draw[fill] (u\a) circle (.05);
         }
         \draw[bunired, -latex] (u1) -- (u2);
         \draw[bunired, -latex] (u2) -- (u3);
    \end{tikzpicture}}})$};

\node (b1) at (5,2.5) { $\cF(\text{\raisebox{0em}{\begin{tikzpicture}[scale =.3, very thick]
    \foreach \a in {1,...,3}
         {
            \node[] (u\a) at ({\a*3},0) {};
            \draw[fill] (u\a) circle (.05);
         }
         \draw[bunired, -latex] (u1) -- (u2);
         \draw[bunired, latex-] (u2) -- (u3);
    \end{tikzpicture}}})\ \ $};
\node (b2) at (5,-2.5) { $\cF(\text{\raisebox{0em}{\begin{tikzpicture}[scale =.3, very thick]
    \foreach \a in {1,...,3}
         {
            \node[] (u\a) at ({\a*3},0) {};
            \draw[fill] (u\a) circle (.05);
         }
         \draw[bunired, latex-] (u1) -- (u2);
         \draw[bunired, -latex] (u2) -- (u3);
    \end{tikzpicture}}})\ \ $};

\node (c) at (10,0) { $\cF( \text{\raisebox{0em}{\begin{tikzpicture}[scale =.3, very thick]
    \foreach \a in {1,...,3}
         {
            \node[] (u\a) at ({\a*3},0) {};
            \draw[fill] (u\a) circle (.05);
         }
         \draw[bunired, latex-] (u1) -- (u2);
         \draw[bunired, latex-] (u2) -- (u3);
    \end{tikzpicture}}})$};

\draw[thick, ->] (a) -- (b1) node[midway,above,rotate =31] {}; 
\draw[thick, ->] (a) -- (b2) node[midway,above] {}; 

\draw[thick, ->] (b1) -- (c) node[midway,above] {}; 
\draw[thick, ->] (b2) -- (c) node[midway,above left,rotate =-29] {}; 
\end{tikzpicture}
\caption{The Boolean poset $\mathcal{O}(\tL_2)$ and ${\rm OC}(\tL_2)$.}
\end{subfigure}
~\\~\\
\begin{subfigure}{\textwidth}
\centering   
\begin{tikzpicture}[scale = .5]
\draw[dotted] (0,0) -- (0,-4.85) node[below] {$\bF_{0}^{2}$};

\draw[dotted] (5,3) -- (5,-4) node[below] {$\begin{array}{c}\bF_{0}^{2} \oplus \bF_1\\ \oplus\\ \bF_0^2\oplus \bF_1\end{array}$};

\draw[dotted] (10,0) -- (10,-4.85) node[below] {$\bF_{0}^{2}$};

\node[below] (uc0) at (0,-4.85) {\phantom{$C^0(X)$}};
\node[below] (uc1) at (5,-4.85) {\phantom{$C^0(X)$}};
\node[below] (uc2) at (10,-4.85) {\phantom{$C^0(X)$}};

\draw[->] (uc0) -- (uc1) node[midway, above] {$d^0$};
\draw[->] (uc1) -- (uc2) node[midway, above] {$d^1$};

\node[fill, white] at (5,0){$\oplus$};
\node  at (5,0){$\oplus$};

\node[fill, white] at (0,0) {\phantom{$\cF( \text{\raisebox{0em}{\begin{tikzpicture}[scale =.3, very thick]
    \foreach \a in {1,...,3}
         {
            \node[] (u\a) at ({\a*3},0) {};
            \draw[fill] (u\a) circle (.05);
         }
         \draw[bunired, latex-] (u1) -- (u2);
         \draw[bunired, latex-] (u2) -- (u3);
    \end{tikzpicture}}})$}};

\node[fill, white] at (5,2.5){\phantom{$\cF( \text{\raisebox{0em}{\begin{tikzpicture}[scale =.3, very thick]
    \foreach \a in {1,...,3}
         {
            \node[] (u\a) at ({\a*3},0) {};
            \draw[fill] (u\a) circle (.05);
         }
         \draw[bunired, latex-] (u1) -- (u2);
         \draw[bunired, latex-] (u2) -- (u3);
    \end{tikzpicture}}})$}};
\node[fill, white] at (5,-2.5){\phantom{$\cF( \text{\raisebox{0em}{\begin{tikzpicture}[scale =.3, very thick]
    \foreach \a in {1,...,3}
         {
            \node[] (u\a) at ({\a*3},0) {};
            \draw[fill] (u\a) circle (.05);
         }
         \draw[bunired, latex-] (u1) -- (u2);
         \draw[bunired, latex-] (u2) -- (u3);
    \end{tikzpicture}}})$}};
    
\node[fill, white] at (10,0) {\phantom{$\cF( \text{\raisebox{0em}{\begin{tikzpicture}[scale =.3, very thick]
    \foreach \a in {1,...,3}
         {
            \node[] (u\a) at ({\a*3},0) {};
            \draw[fill] (u\a) circle (.05);
         }
         \draw[bunired, latex-] (u1) -- (u2);
         \draw[bunired, latex-] (u2) -- (u3);
    \end{tikzpicture}}})$}};

\node (a) at (0,0) {$\cF(\text{\raisebox{0em}{\begin{tikzpicture}[scale =.3, very thick]
    \foreach \a in {1,...,3}
         {
            \node[] (u\a) at ({\a*3},3) {};
            \draw[fill] (u\a) circle (.05);
         }
         \draw[bunired, latex-] (u1) -- (u2);
         \draw[bunired, -latex] (u2) -- (u3);
    \end{tikzpicture}}})$};

\node (b1) at (5,2.5) { $\cF(\text{\raisebox{0em}{\begin{tikzpicture}[scale =.3, very thick]
    \foreach \a in {1,...,3}
         {
            \node[] (u\a) at ({\a*3},0) {};
            \draw[fill] (u\a) circle (.05);
         }
         \draw[bunired, latex-] (u1) -- (u2);
         \draw[bunired, latex-] (u2) -- (u3);
    \end{tikzpicture}}})\ \ $};
\node (b2) at (5,-2.5) { $\cF(\text{\raisebox{0em}{\begin{tikzpicture}[scale =.3, very thick]
    \foreach \a in {1,...,3}
         {
            \node[] (u\a) at ({\a*3},0) {};
            \draw[fill] (u\a) circle (.05);
         }
         \draw[bunired, -latex] (u1) -- (u2);
         \draw[bunired, -latex] (u2) -- (u3);
    \end{tikzpicture}}})\ \ $};

\node (c) at (10,0) { $\cF( \text{\raisebox{0em}{\begin{tikzpicture}[scale =.3, very thick]
    \foreach \a in {1,...,3}
         {
            \node[] (u\a) at ({\a*3},0) {};
            \draw[fill] (u\a) circle (.05);
         }
         \draw[bunired, -latex] (u1) -- (u2);
         \draw[bunired, latex-] (u2) -- (u3);
    \end{tikzpicture}}})$};

\draw[thick, ->] (a) -- (b1) node[midway,above,rotate =31] {}; 
\draw[thick, ->] (a) -- (b2) node[midway,above] {}; 

\draw[thick, ->] (b1) -- (c) node[midway,above] {}; 
\draw[thick, ->] (b2) -- (c) node[midway,above left,rotate =-29] {}; 
\end{tikzpicture}
\caption{The Boolean poset $\mathcal{O}(\tA_2)$ and ${\rm OC}(\tA_2)$.}
\end{subfigure}
\caption{The complexes ${\rm OC}(\tL_2)$ (above) and ${\rm OC}(\tA_2)$ (below) associated to the linear graph $\tL_2$ and the alternating graph $\tA_2$, respectively. The symbol $\bF^n_b$ denotes a copy of $\bF^n$ in simplicial degree $b$.}
\label{fig:two orient}
\end{figure}
Crucially, the cohomology $\mathrm{OH}$ does depend on the fixed orientation on $\tG$. Indeed, the two graphs just considered correspond to two different orientations on the linear unoriented graph with two edges. A simple computation based on the graded vector spaces in Figure~\ref{fig:two orient} shows that the Euler characteristic of $\mathrm{OH}(\tL_2)$ and $\mathrm{OH}(\tA_2)$ are equal in absolute value, but have different sign; hence, their poset cohomologies are not isomorphic.
Furthermore, if we denote by $\bF^n_{a,b}$ a copy of $\bF^n$ in homological degree $a$ and simplicial degree $b$, explicitly computing the homology for both graphs we obtain that~${\rm OH}^{*,*}(\tL_2) \cong \bF_{0,1} \oplus \bF_{2,1}$, and ${\rm OH}^{*,*}(\tL_2) \cong \bF^2_{1,1}$.\\

We conclude by proving the following unexpected result; namely, the oriented homology $\mathrm{OH}(\tG, \oo)$ admits an extremely simple and explicit description, regardless of the initial orientation~$\oo$ chosen. In particular, we show that $\mathrm{OH}(\tG,\oo)$ is simply a count of the free-flow orientations on~$\tG$.

\begin{thm}\label{thm:freeflows and oriented}
Let $\tG$ be a connected unoriented graph, and let $\oo$ be an orientation on $\tG$. Then, the generators of $\mathrm{OH}^*(\orient)$ are in bijection with  free-flow orientations on $\tG$.
More precisely, $\dim ({\rm OH}^i(\orient))$ is the number of free-flow orientations on $\tG$ obtained from $\oo$ by changing the orientation of exactly $i$ edges.
In particular, ${\rm OH}^*(\orient)$ is non-trivial if and only if $\tG$ is a pseudotree. 
\end{thm}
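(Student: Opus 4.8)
The plan is to use the isomorphism $\mathcal{O}(\tG,\oo)\cong (\wp(E(\tG)),\subseteq)$ together with a Mayer--Vietoris / filtration argument that reduces the computation of $\mathrm{OH}^*(\orient)$ to the combinatorics of which subsets of edges yield free-flow orientations. First I would recall from the construction that the cochain complex $\mathrm{OC}(\tG,\oo)$ is the poset cochain complex $C^*_\cF(\mathcal{O}(\tG,\oo))$, where $\cF(\oo')=\bF\langle\mat(\tG_{\oo'})\rangle$ and the structure maps are induced by the inclusions $\mat(\tG_{\oo_1})\cap\mat(\tG_{\oo_2})\hookrightarrow\mat(\tG_{\oo_2})$. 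Since the underlying poset is Boolean and the differential preserves the simplicial degree, it suffices to fix a simplicial degree $k$ and to understand, for each orientation $\oo'$, the set of $k$-simplices of $\mat(\tG_{\oo'})$. By Proposition~\ref{prop:simplessi_psuedoforeste}, $k$-simplices of $\mat(\tG_{\oo'})$ are exactly rooted spanning pseudoforests in $\tG_{\oo'}$ with free-flow induced orientation and exactly $k+1$ edges; by Lemma~\ref{lem:pseudoindegree} these are the spanning subgraphs on $k+1$ edges all of whose vertices have indegree $\leq 1$ in $\tG_{\oo'}$.

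The key step is a change of indexing: instead of organizing simplices by orientation, I would organize orientations by simplices. Fix a spanning subgraph $\tH$ (a candidate pseudoforest, as an \emph{unoriented} subgraph of $\tG$ with $k+1$ edges which is itself a pseudoforest). For such an $\tH$, consider the subposet $\mathcal{O}_\tH\subseteq\mathcal{O}(\tG,\oo)$ of orientations $\oo'$ for which the chosen simplex $\sigma_\tH$ (with its free-flow orientation) actually lies in $\mat(\tG_{\oo'})$; this happens precisely when the edges of $\tH$ are oriented in $\oo'$ according to the free-flow orientation, while the remaining edges of $\tG$ are oriented arbitrarily. Thus $\mathcal{O}_\tH$ is itself isomorphic to the Boolean poset $\wp(E(\tG)\setminus E(\tH))$, and the generator ``$\sigma_\tH$ in degree $\oo'$'' contributes a copy of $\bF$ to $C^{|\oo'\,\triangle\,\oo|}$ for each $\oo'\in\mathcal{O}_\tH$. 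One checks that the part of the differential $d^*$ acting on these generators is exactly the reduced simplicial coboundary of the Boolean poset $\wp(E(\tG)\setminus E(\tH))$ — equivalently, the cochain complex of a simplex (a cone) — hence it is acyclic \emph{unless} $E(\tG)\setminus E(\tH)=\emptyset$, i.e.\ unless $\tH=\tG$ as unoriented graphs. Here the key subtlety, and the main obstacle, is that a fixed simplex may split off differently depending on how many free-flow pseudoforest structures $\tH$ admits; I would handle this by carefully decomposing the whole complex as a direct sum indexed by isomorphism classes of (unoriented subgraph, choice of free-flow structure) pairs, and arguing that the differential respects this decomposition because adding or removing an edge from an orientation either leaves a given free-flow pseudoforest structure intact or destroys its membership, never identifying two distinct structures.

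Granting that decomposition, the computation finishes quickly. The only summand contributing to cohomology is the one with $\tH=\tG$: this requires $\tG$ itself (as an unoriented graph on $n$ vertices) to be a pseudoforest, and since $\tG$ is connected, a spanning pseudoforest with all of $\tG$'s edges means $\tG$ is a pseudotree. In that case $E(\tG)\setminus E(\tH)=\emptyset$, so $\mathcal{O}_\tH$ is a single point, and each free-flow orientation $\oo'$ of $\tG$ contributes one generator in homological degree $|\oo'\,\triangle\,\oo|$ (the number of edges whose orientation differs from $\oo$) and simplicial degree $|E(\tG)|-1$. By the discussion after Definition~\ref{def:freeflow}, a pseudotree which is not a tree has exactly two free-flow orientations and a tree on $n$ vertices has exactly $n$; in all cases these are in bijection with the surviving generators, which is exactly the claim, and $\mathrm{OH}^*(\orient)$ is non-trivial precisely when $\tG$ is a pseudotree. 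The refinement about $\dim(\mathrm{OH}^i(\orient))$ counting free-flow orientations at Hamming distance $i$ from $\oo$ is then immediate from the homological grading being $|\oo'\,\triangle\,\oo|$.

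I expect the main obstacle to be making the ``decompose the complex by free-flow structure'' step fully rigorous: one must verify both that the decomposition is by direct summands of \emph{cochain complexes} (not merely of graded vector spaces) and that each non-maximal summand is genuinely the acyclic reduced cochain complex of a simplex rather than something more complicated. A clean way to package this is to introduce, for each free-flow pseudoforest structure $\tau$ on a spanning subgraph of $\tG$, the ``support interval'' in the Boolean poset $\mathcal{O}(\tG,\oo)$ consisting of those orientations extending $\tau$'s prescribed edge-orientations, observe that this interval is a Boolean sublattice, and apply the standard fact that the augmented simplicial cochain complex of a nonempty simplex is acyclic. Alternatively, one can run a spectral sequence or a discrete Morse matching on $\mathcal{O}(\tG,\oo)$ that pairs orientations $\oo'\prec_\oo\oo''$ whenever the flipped edge is not ``pinned'' by any surviving simplex; I would present whichever of these the authors find least technical, but the interval-decomposition argument seems the most transparent.
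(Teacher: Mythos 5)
Your proposal is correct and is essentially the paper's own argument: your pairs (spanning pseudoforest, free-flow structure) are exactly the matchings $m\in\MM(\tG)$ by which the paper indexes its direct-sum decomposition of ${\rm OC}(\tG,\oo)$ into Boolean (simplex) subcomplexes $C(m)$, acyclic unless the matching uses every edge, with the surviving generators identified with free-flow orientations via Lemma~\ref{lem:pseudoindegree}. The ``subtlety'' you flag is resolved in the paper in the same way you suggest, namely by indexing summands by the matching itself rather than by the underlying unoriented subgraph.
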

\begin{proof}
By definition, ${\rm OC}^i(\tG, \oo)$ is the $\bF$-vector space spanned by oriented matchings on $\tG_{\oo'}$, for all orientations $\oo'$ differing from $\oo$ on precisely $i$ edges. 

We call a chain complex \emph{Boolean of dimension $n+1$} if it is isomorphic to the reduced simplicial chain complex of the standard $n$-simplex (the $-1$-simplex being the empty set).
We claim that for each matching $m$ in $\MM(\tG) = \bigcup_{\oo} \mat(\tG_{\oo})$ (\emph{cf.}~Equation~\eqref{eq:matching as union}), we have a Boolean subcomplex $C(m)$ of ${\rm OC}(\tG,\oo)$, and that moreover ${\rm OC}(\tG,\oo)$ splits as the direct sum of these complexes.

Fix $m \in \MM(\tG)$, and define 
\[ C(m) = \bF\langle m' \in \mat(\tG, \oo')\mid \oo'\in \mathcal{O}(\tG),\ m' = m \rangle \subseteq {\rm OC}(\tG,\oo) \ .\]
Since the differential $d$ of ${\rm OC}^*(\tG, \oo)$ is induced by the inclusions $\mat(\tG_{\oo_1}) \cap \mat(\tG_{\oo_2}) \hookrightarrow \mat(\tG_{\oo_2})$ (\emph{cf.}~Equations~\eqref{eq:diff} and~\eqref{eq: inclusions of oo}), it follows that $C(m)$, endowed with its induced differential, is a subcomplex. Furthermore, if $d(x)\in C(m)$ then $x\in C(m)$. This last fact implies that ${\rm OC}(\tG,\oo)$ splits as the direct sum of these complexes.
It is not hard to see that $C(m)$ is Boolean;  first, consider the set $F$ of edges of $\tG$ identified by the matching $m\subset E( \hasse (\tG))$. For an orientation $\oo'$ on $\tG$, $m$ will belong to $\mat (\tG_{\oo'})$ if and only if the target of each edge in $F$ is the same as the corresponding edge in $m$ (\emph{cf.}~Definition~\ref{def:omatch}).
Let $\widehat{\oo}$ be the unique orientation on $\tG$ such that the edges in $F$ are coherent with $m$, and the remaining edges are oriented as in $\oo$. Starting from~$\widehat{\oo}$ and changing the orientations of the edges in $E(\tG)\setminus F$ we get all orientations $\oo'$ such that 
\[ C(m)\cap \bF\langle \mat(\tG_{\oo'}) \rangle  \neq (0) \ . \]
In particular, $C(m)$ is Boolean of dimension $\vert E(\tG)\setminus F\vert$. 

Boolean complexes have trivial homology, unless they are of dimension $0$ -- in which case the homology is free of rank~$1$. Therefore, the generators of ${\rm OH}(\tG_\oo)$ correspond  precisely to the oriented matchings on $\tG$ with the same number of edges as $\tG$. In turn, each of these matchings induce an orientation on $\tG$ where all vertices have indegree $1$ or $0$. Thus, by Lemma~\ref{lem:pseudoindegree} these orientations are exactly the free-flow orientations on $\tG$.
\end{proof}

This last result implies in particular that the rank of the oriented homology is independent of the initial choice of orientation. However, as showed in the previous example and in Figure~\ref{fig:two orient}, the oriented homology of a given graph with respect to two distinct orientations can be different. Namely, the generators might be supported in different degrees.

\bibliographystyle{alpha}

\end{document}